\DeclareMathOperator{\intt}{int}
\DeclareMathOperator{\sign}{sign}
\definecolor{blue}{rgb}{0,0,0.8}
\definecolor{red}{rgb}{0.8,0,0}
\definecolor{darkgreen}{rgb}{0,0.6,0}
\newcommand\restr[2]{{  \left.\kern-\nulldelimiterspace
  #1
  \vphantom{\big|}
  \right|_{#2}
  }}
\newcommand{\oS}{\overline{S}}
\newcommand{\DS}{\partial S}
\newcommand{\PP}{{\mathcal P}}
\newcommand{\Pnn}{{\mathcal P}_{n-1}} 
\newcommand{\Pn}{{\mathcal P}_n}
\newcommand{\WW}{{\mathcal W}}
\newcommand{\YY}{{\mathcal Y}}
\newcommand{\A}{{\mathcal A}}
\newcommand{\W}{{\mathcal W}}
\newcommand{\N}{{\mathcal N}_n}
\newcommand{\OO}{{\mathcal O}_n}
\newcommand{\Ji}{{\mathcal J}_i}
\newcommand{\Jj}{{\mathcal J}_j}
\newcommand{\Kij}{{\mathcal K}_{ij}}
\newcommand{\LL}{{\mathcal L}}
\newcommand{\ai}{{\alpha}_i}
\newcommand{\lk}{{\ell}_k}
\newcommand{\RR}{{\mathbb R}}
\newcommand{\II}{{\mathbb I}}
\newcommand{\de}{{\delta}}
\newcommand{\ff}{\varphi}
\newcommand{\al}{{\alpha}}
\newcommand{\ve}{{\varepsilon}}
\newcommand{\vk}{{\varepsilon}_k}
\newcommand{\vki}{{\varepsilon}_{k,i}}
\newcommand{\vnni}{{\varepsilon}_{n+1,i}}
\newcommand{\yy}{\mathbf{y}}
\newcommand{\zz}{\mathbf{z}}
\newcommand{\xx}{\mathbf{x}}
\newtheorem{theorem}{Theorem}
\newtheorem{corollary}{Corollary}
\newtheorem{lemma}{Lemma}
\newtheorem{definition}{Definition}
\theoremstyle{definition}
\newtheorem{example}{Example}
\newtheorem{remark}{Remark}
\begin{document}

\title[Examples and counterexamples about interpolation]{On some examples and counterexamples\\  about weighted Lagrange interpolation \\ with Exponential 
and Hermite weights}

\author{Patricia Szokol}

\address{Faculty of Informatics, University of Debrecen, P.O. Box 400, H-4002 Debrecen, Hungary, HUN-REN Alfr\'ed R\'enyi Institute of Mathematics, P.O. Box 127, H-1364, Budapest, Hungary and HUN-REN-UD Equations, Functions, Curves and their Applications Research Group}

\begin{abstract}
The famous Bernstein conjecture about optimal node systems in classical polynomial Lagrange interpolation, standing unresolved for about half a century, was solved by T. Kilgore in 1978. Immediately after him, the additional conjecture of Erd\H os was also solved by de Boor and Pinkus. These breakthrough achievements were built on a fundamental auxiliary result on the nonsingularity of derivative (Jacobian) matrices of certain interval maxima in the function of the nodes.

After the above breakthrough, a considerable effort was made to extend the results to the case of at least certain Chebyshev-Haar spaces of functions. 
Here, we analyze to what extent the key nonsingularity statement remains true in the case of exponentially weighted interpolation on the half-line, or with Hermite weights on the full real line. In these settings, counterexamples demonstrate that the respective derivative matrices may as well be singular.
It remains to further study whether the Bernstein- and Erd\H{o}s characterizations remain valid.

The ``hybrid'' Chebyshev-Haar system of exponentially weighted polynomials adjoined with constant functions and the corresponding interpolation were previously studied, as well. Some hints were also given for the proof of the respective Bernstein and Erdős conjectures. We present in detail the full proof together with all the auxiliary results needed in this setting.  
\end{abstract}

\maketitle

{\bf MSC 2020 Subject Classification.} Primary: 41A05; Secondary: 41A10, 41A50, 41A81.

{\bf Keywords and phrases.} {\it Lagrange interpolation, weighted interpolation, weighted maximum norm, Jacobi determinant, Implicit function theorem, minimax point, equioscillation, sandwich property, Markov interlacing property, Chebyshev-Haar system, intertwining.}

\section{Introduction}\label{sec:Intro}

\subsection{Notation}  
Denote by $\II$ any interval, the classical case being a compact interval $[a,b]$. Here we will talk about $[0,\infty)$ and $\RR=(-\infty,\infty)$ as well. Also, let a weight $w(t)>0$ be given. In this paper we will deal with rather special weights only: our weights are $\exp(-t)$ for $[0,\infty)$ (the ``exponential weight'') and $\exp(-t^2)$ for $\RR$ (the ``Hermite weight''). In the classical, unweighted case $w(t)\equiv 1$. We will denote the space of degree at most $n$ algebraic polynomials as $\Pn$, and when these are weighted by $w(t)$, then we consider the space $\WW_n:=\{ pw~:~ p\in\Pn\}$. In particular, following \cite{K-AMH}, the weighted polynomial space with the exponential weight $w(t)=\exp(-t)$ -- where here we automatically take the underlying interval as $[0,\infty)$ -- will be denoted\footnote{The spaces $\WW_n$ with weight $w(t)=e^{-t}$ and $Y_n$ differ slightly: $\WW_n$ consists of all weighted polynomials on the whole real line $\RR$, while $Y_n$ is a subspace of continuous functions on the positive half-line that vanish at $\infty$.} as $Y_n$, and the space of weighted polynomials with the Hermite weight and considered on the full real line will be denoted as $Z_n$.

Our aim is to study the norm of the weighted Lagrange interpolation operator $\LL\colon C_w(\II)\to \WW_{n}$, where $C_w(\II)=\{f\in C(\II): \|f\|_w:=\|fw\|_{\infty}<\infty\}$ and to determine the optimal node system $\xx=(x_1,\ldots,x_n)$ which minimizes the operator norm. A node system $\xx=(x_1,\ldots,x_n)$ is an ordered array of points $a<x_1<\dots<x_n<b$, where $x_i\in \II$. All such node systems form \emph{the simplex} $S:=\{\xx ~:~ a<x_1<\dots<x_n<b\}$. When the weight $w$ is nonzero at the finite endpoints $a, b$, interpolation is also performed at the endpoints. Accordingly, we set $x_0:=a$ and $x_{n+1}:=b$, which belong to the node system $\xx$ as fixed nodes\footnote{In most cases considered in this manuscript, we apply the interpolation at $a=:x_0$ and $x_1<\dots<x_n$, but not at $b:=\infty$ and hence $\LL\colon C_w(\II)\to \WW_n$. However, in the case of Hermite weight, where $\II=\RR$, the interploation is applied at the nodes $x_1<\dots<x_n$, i.e., $\LL\colon C_w(\II)\to Z_{n-1}$.}.

There is also a special case of ``interpolation at infinity'', namely when we formally consider interpolation at $b=\infty$, even if in the classical sense this has no meaning. To give this a meaning, we consider only functions with a definite limit at infinity, and substitute for the function value this limit: $f(\infty):=\lim_{t\to +\infty} f(t)$. That special consideration will occur in the framework of the extended weighted polynomial system, when exponentially weighted degree (at most) $n$ polynomials are combined with constant functions forming the ``hybrid function system'' $\YY_n$. 

Let $h_k(t):=h_k(w,\xx,t)$ denote the \emph{fundamental polynomials} of the system corresponding to the nodes of $\xx$. (These are denoted by $y_k$ in \cite{K-AMH}). These are (weighted) polynomials satisfying $h_k(x_i)=\delta_{k,i}$ with the Kronecker delta function: $\delta_{k,i}=1$, if $k=i$, and 0 otherwise. The \emph{Lebesgue function} is then defined by 
\[
L(\xx,t):=L(w,\xx,t):=\sup_{\|f\|_w<1}|\LL(f)(t)|=\sum\limits_{k=0,1}^{n,n+1}|h_k(t)|.
\]
Here, the sum starts from $0$ or $1$, respectively ends at $n$ or $n+1$,  corresponding to the case if we interpolate at the point $a$, respectively $b$, or not.

Due to the sign changes of the $h_k$, in any of the intervals\footnote{As was said above, $x_0=a$ and $x_{n+1}=b$, if we interpolate at the endpoints. Otherwise, let $x_0:=-\infty$ and $x_{n+1}:=\infty$.} $I_i:=[x_{i-1},x_i]$, $i=1,\dots, n+1$, (with $I_0:=(-\infty,a]$, if we interpolate at $a$ and hence $x_0=a$) the Lebesgue function consists of sums of the form $\sum_{k=0,1}^{n,n+1} \pm h_k(t)$, with the choice of signs $\pm$ depending on the interval $I_i$, but otherwise remaining constant within one such interval. As it is well-known, the norm of the operator $\LL\colon C_w(\II)\to Y_n, \hspace{2mm} (Z_n, \YY_n)$ coincides with the supremum norm of $L(\xx,\cdot)$ on $\II$. Therefore, we consider the interval maxima $m_i(\xx):=\max_{t \in I_i} L(\xx,t)$ (for all occuring $i$).  The \emph{optimal nodes} of interpolation are those node systems that minimize the operator norm, i.e., that minimize
$$\|\LL\|_w=\|L(\xx,\cdot)\|_{\infty}=\max\limits_i m_i(\xx).$$

\subsection{Basic properties of interpolation}

Let us summarize the basic properties of the interpolation processes this paper is about.

\subsubsection{Equioscillation property}\label{Bernstein} 

If all $m_i$ values are equal, we say that the Lebesgue function of the Lagrange interpolation equioscillates. Bernstein \cite{Bernstein} conjectured that in the classical, unweighted case, the optimal (minimax) node system equioscillates, i.e., every optimal node system is equioscillating. Later, the unicity statement is often considered part of the Bernstein Conjecture–there exists exactly one equioscillating node system–although Bernstein himself did not formulate this part of the Conjecture.

It seems to be more natural to separate these claims, and call the statement ``optimality entails equioscillation'' as the \emph{equioscillation property} of the interpolation. 

\begin{definition}[Equioscillation property]
We say that the Bernstein equioscillating property holds if any optimal (minimax) node system $\xx$ necessarily equioscillates.
\end{definition}

Proving the converse of the equioscillation property and unicity requires further investigation; the following properties will be key to obtaining them, whereas deriving the direct statement usually results from a perturbation argument. Already, the statement that (for the classical polynomial interpolation on a compact interval) there exists an equioscillation node system, is highly nontrivial--it was the main result of \cite{Kilgore-BullAMS}, which summarized the findings of Kilgore's PhD thesis \cite{KilgorePhD}. The Bernstein Conjecture, direct and converse, was then fully proved in \cite{Kilgore}.

\subsubsection{``Sandwich'' property}\label{Sandwich} Erd\H{o}s \cite{Erdos-1, Erdos-2} coupled the above Bernstein conjecture with the following statement. Let $\yy$ be an (arbitrary) optimal node system, i.e., one for which $\|L(\yy,\cdot)\|_{\infty}$ is minimal. According to the Bernstein conjecture, such a system is equioscillating and satisfies
\[
\|L(\yy,\cdot)\|_{\infty} = m_i(\yy)
\quad \text{for all relevant indices } i.
\]

This leads to the following property.

\begin{definition}[Sandwich property]
We say that the (Erd\H os or) sandwich property holds if for the equioscillating node system $\yy$ and for any node system $\xx \in S$ with $\xx \neq \yy$, there exist indices $j,k$ such that
\[
m_j(\xx) < \|L(\yy,\cdot)\|_{\infty}
\quad \text{and} \quad
m_k(\xx) > \|L(\yy,\cdot)\|_{\infty}.
\]
\end{definition}

In other words, for any non-optimal node system, some of the corresponding quantities fall below, while others exceed the optimal supremum norm. This notion is referred to as the \emph{sandwich property} by Szabados and Vértesi (see p.~96 of \cite{SzV}). In the classical case, it was proved by de Boor and Pinkus \cite{CBoorPinkus}, building on the fundamental analysis of Kilgore \cite{Kilgore}.

This result has strong consequences, as, e.g., it contains the unicity statement of a minimax (optimal) system, in particular, and it also means that $\min_i m_i(\xx)$ and $\max_i m_i(\xx)=\|L(\xx,\cdot)\|_{\infty}$ provide lower and upper, respectively, bounds for the extremal value $\|L(\yy,\cdot)\|_{\infty}$. Therefore, this is a strong tool to estimate the optimal (least possible) norm of interpolation by determining these lower and upper bounds. Given that optimal node systems are hard to find and no exact description exists for general $n$, these methods of estimation are of crucial importance.

\subsubsection{Nonsingularity}\label{Nonsingularity} 

Although the following property is usually not formulated as a separate theorem, it has been a crucial step in essentially all proofs concerning optimal interpolation. 

\begin{definition}[Nonsingularity property]\label{def:nonsing}
Let the interval maxima $m_i$ be (continuously) differentiable in function of the nodes $\xx\in S$. We say that the \emph{Nonsingularity property} holds if the derivative matrices
\begin{equation}\label{Adef}
A:=\left[ \frac{\partial m_i}{\partial x_j}\right]_{i=1; j=1}^{n+1 ; n}, 
\qquad 
A_k:=\left[ \frac{\partial m_i}{\partial x_j}\right]_{i=1,\, i\ne k; j=1}^{n+1 ; n}
\end{equation}
have the property that each matrix $A_k$ is nonsingular for all $\xx \in S$ and all $1 \le k \le n+1$.
\end{definition}

The breakthrough paper of Kilgore \cite{Kilgore}, as well as subsequent works, relies on this key auxiliary result, with most of the effort devoted to establishing this property.

Once proven, this property has far-reaching consequences, following much more easily than the proof of this fact itself. See in particular the elegant and interesting analysis by Shi in \cite{Shi}, where he demonstrates that once this property is known, all other consequences can be derived via linear programming techniques.

\subsubsection{Homeomorphism}\label{Homeomorphism} Following de Boor and Pinkus \cite{CBoorPinkus}, we introduce the difference of consecutive interval maxima function
\[
\Gamma(\xx) := (\ldots, m_i(\xx) - m_{i-1}(\xx), \ldots).
\]
This defines a vector-valued mapping from $S$ into $\RR^n$, of the same dimension.

\begin{definition}[Homeomorphism property]\label{def:hom}
We say that the mapping $\Gamma$ satisfies the \emph{homeomorphism property} if it is a homeomorphism of $S$ onto $\RR^n$.
\end{definition}

In the classical case, this property was discovered and proved by de Boor and Pinkus \cite{CBoorPinkus}, who in fact considered it the main result of their paper.

Observe that once this is known, it also has important consequences. In particular, the preimage of the identically zero vector ${\bf 0}$ under $\Gamma$ is the set of all equioscillating node systems, hence if we already know that a minimax system is equioscillating, the homeomorphism property implies uniqueness of the optimal node system.

\subsubsection{Intertwining property}\label{Intertwining} As a far-reaching strengthening of Property~\ref{Sandwich}, we introduce the following notion.

\begin{definition}[Intertwining property]\label{def:int}
We say that the \emph{intertwining property} holds if for any two distinct node systems $\xx, \yy \in S$, $\xx \neq \yy$, there exist indices $i,j$ such that
\[
m_i(\xx) < m_i(\yy)
\quad \text{and} \quad
m_j(\xx) > m_j(\yy).
\]
\end{definition}

Thus, similar comparison relations hold not only between an arbitrary node system and an optimal one, but between any two distinct node systems.

The intertwining property is stronger than the Erd\H{o}s--sandwich property, and in particular implies the uniqueness of the equioscillating system. In the classical case of Lagrange interpolation by polynomials on a compact interval, this property was proved by de Boor and Pinkus \cite{CBoorPinkus}.

\subsection{Results}

The present work stems from our effort to analyze the paper \cite{K-AMH}.We considered ways to generalize the results and, therefore, sought to better understand the methods for proving the above properties and their relationship.

In the classical case, as shown by the landmark works \cite{Kilgore} and \cite{CBoorPinkus}, all the above five properties were proven. It was clear already from these works that the strongest of these properties, from which the others are easier, is the Nonsingularity Property \ref{Nonsingularity}. As the later analysis of Shi \cite{Shi} exposes, only a slight additional observation (that of \emph{properness} of the mapping $\Gamma$) is necessary for getting the homeomorphism property. Also, the equioscillation and the intertwining properties can be deduced directly from the nonsingularity property. Therefore, it is no wonder that all cases of interpolatory function systems, where the truth of the Bernstein Conjecture was confirmed, also admit all the other properties--they are always  proved through the nonsingularity property, which then brings with itself the rest.

However, here we will see that--contrary to some claims in \cite{K-AMH}--exponentially weighted polynomials on the half-line, and weighted polynomial systems with the Hermite weight $\exp(-t^2)$ on the real line, do not admit this strong nonsingularity property. As a result, for these systems, the mutatis mutandis proofs of the remaining properties (such as the Bernstein Conjecture, for example), necessarily break down. Here we will show that the validity of some of the above properties--namely, the intertwining property--also fails to hold for these systems. It remains for future investigations if the equioscillation property and the sandwich property can be saved, but if so, then new ideas will be necessary to overcome the singularity of the arising derivative matrices.

A vague idea about a point of attack could be to prove and then exploit better the ``next best thing'' to nonsingularity, that is, the homeomorphism property of the ``difference of local maxima function'' $\Gamma$, see formula \eqref{gamma}. This homeomorphism property was proved in the classical case by de Boor and Pinkus as Theorem 2 of \cite{CBoorPinkus}. The proof invokes a classical, but nontrivial, topology theorem due to Hadamard: a locally nonsingular proper mapping is a covering map, i.e., in our case, a homeomorphism of the set of node systems $S$ onto $\RR^n$. Directly inspired by the results of de Boor and Pinkus and the case of classical Lagrange interpolation, this homeomorphism property was successfully proved and applied in a number of situations where the nonsingularity property is unavailable, or at least is not known. See, e.g., \cite{Bojanov}, \cite{TLMS2018, Homeo, JMAA}, \cite{Tatiana-I, Tatiana-II}.

In \cite{K-AMH}, the Lagrange interpolation for the so-called ``hybrid system'' $\YY_n:={\rm span}\{Y_n, {\bf 1}\}$, i.e., the space of weighted polynomials with exponential weight adjoined with constant functions, is also studied. The proof of the Bernstein and Erdős conjectures for this setting (see Theorem 2 in \cite{K-AMH}) is rather sketchy, but some steps can be recovered from the hints of the author (or from the related literature, where the author does not refer to them precisely). 

Most importantly, in the case $\YY_n$, we encounter a Chebyshev–Haar system that contains the constant functions; hence, by Lemma 1 of \cite{Kilgore-Cheney}, at each point $t$ that is not a node we have $L(\xx,t)>1$. As we will see, this property will ultimately guarantee the nonsingularity of the matrices A in a nontrivial way.

Nevertheless, the argument about ``interpolation at infinity'' requires a detailed explanation. 

A warning sign is that the straightforward zero-counting method of proof, working so far, may encounter difficulties, because in $\WW_n$ it is clear that a function $f \in Y_n$ either has $n$ real roots or it has at most $n-2$, but in $\YY_n$ it is not determined if a function $f\in\YY_n$ has $n+1$ or, well possibly, $n$ real roots. (E.g., if the constant term is zero, the remaining weighted polynomial of degree $n$ can have exactly $n$ real zeros. If $n=1$, this means 1 zero, whereas $-3te^{-t}+1$ has already 2, having a negative value at $t=1$ and attaining  positive values at $0$ and $2$.)  

The structure of this paper is as follows. Section 2 is devoted to the presentation of some counterexamples that show the inaccuracies of the proof of the Bernstein and Erdős Conjectures in \cite{K-AMH} for weighted Lagrange interpolation with Exponential- and Hermite weights. In Section 3,  we consider weighted Lagrange interpolation with exponential weight for the new hybrid system setup $\YY_n$. The corresponding results were also stated by Kilgore in \cite{K-AMH}, although only with some hints of the proof. A complete verification of the individual steps, however, requires substantially more elaborate reasoning. We present the proof of the nonsingularity property in detail and, invoking the corresponding results of Shi, show that the additional basic properties of interpolation processes hold as well, including the Bernstein and Erdős Conjectures.

\section{Counterexamples about weighted Lagrange interpolation with Exponential- and Hermite weights}

\subsection{Interpolation on the half-line with exponential weight}\label{sec:exp}

Here our weight is $w(t):=\exp(-t)$, and the weighted polynomial system $Y_n=\{\exp(-t)p(t)~:~ p\in \PP_n\}$ is considered on the half-line $\II:=[0,\infty)$. If an interpolatory node system $\xx=(x_0,x_1,\ldots,x_n)$ is given, where $x_0:=0$, then we  consider the interval maxima $m_i$ ($i=1,\ldots,n+1)$, together with the location(s) $z_i(\xx)$, where the maximum $m_i(\xx):=\max_{I_i} L(\xx,\cdot)$ is attained within $I_i(\xx):=[x_{i-1},x_i]$ for $i=1,\dots,n$ and $I_{n+1}=[x_n,\infty)$.

Theorem 1 of \cite{K-AMH} claims that the Bernstein and Erd\H{o}s Conjectures hold true for this system. The proof of Theorem 1 relies on the investigation of the matrix $A$ and its submatrices $A_k$, as given in \eqref{Adef}. The latter are declared to be nonsingular, and this property plays a crucial role throughout the argument. However, there is an oversight in the proof, and in fact, the stated nonsingularity fails for many node systems.

The key overlooking is in the assertion, stated and then used throughout, that the $z_i(\xx)$ are (unique and) necessarily \emph{lie in the interior of their intervals $I_i$}, moreover, being interior maximum locations, they satisfy $(P_i)'_t(\xx,z_i)=0$, where $P_i=L|_{I_i}$ is the weighted polynomial which agrees with the restriction of the Lebesgue function to $I_i$. In \cite{K-AMH}, the following statement appears (using the notation introduced here for simplicity): ``We correspondingly define $m_{n+1}$ to be the rightmost maximum value of the function $P_{n+1}$ and $z_{n+1}$ to be the point at which this rightmost maximum occurs (note that $z_{n+1}>x_n$ is not guaranteed by the definition, nor by the inherent nature of the functions being used). With these definitions completed, we note that $\|L\|_{\infty}=\max\{m_1,\dots,m_{n+1} \}$ and $P_i'(\xx,z_i)=0$ for every $i=1,\dots,n+1$.''
Below, we give an example where neither of these assertions holds.

\begin{example}
Let $n=2$ and take $x_0=0$, $x_1=1$ and $x_2=4$ be the interpolation nodes. Then we find that the Lebesgue function on the third interval $[4,\infty)$, denoted by $P_{n+1}=P_3$, has a maximum just at the left endpoint $4$. Furthermore, the derivative $(P_3)'_t(\xx,4)$ is strictly negative. All this can be explicitly calculated as follows. Consider the fundamental interpolation functions $h_k  \in Y_n$, satisfying $h_k(x_j)=\delta_{kj}$. Since this definition will also be needed for general $n$, we formulate it as follows, 
\begin{equation}\label{eq_hkt}
h_k(t):=h_k(w,\xx;t):=  \frac{e^{-t}}{e^{-x_k}}\prod\limits_{l=0, l\ne k}^n \frac{t-x_l}{x_k-x_l}, \qquad k=0,\dots, n.
\end{equation}
With the notation $\varepsilon_{k,i}:=(-1)^{k-i+1+\chi_{i\le k}}$, it leads to the formula 

\begin{equation*}
P_i(t):=P_{i}(\xx,t):=L(\xx,t)|_{I_i(\xx)}=e^{-t}\sum\limits_{k=0}^n \frac{\varepsilon_{k,i}}{e^{-x_k}}\prod\limits_{\substack{l=0, l\ne k}}^n\frac{t-x_l}{x_k-x_l}, \qquad i=1,\dots, n+1.
\end{equation*}
Now, rewriting the expression for the special case $n=2$ and differentiating of $P_{n+1}=P_3$ with respect to $t$ we obtain

\begin{equation}
\begin{aligned}
&(P_{3}(\xx,t))'_t=\frac{\partial}{\partial t} \left[e^{-t}\sum\limits_{k=0}^2 \frac{\varepsilon_{k,3}}{e^{-x_k}}\prod\limits_{l=0, l\ne k}^2\frac{t-x_l}{x_k-x_l} \right]\\
=-e^{-t}\sum\limits_{k=0}^2 &\frac{\varepsilon_{k,3}}{e^{-x_k}}\prod\limits_{\substack{l=0\\ l\ne k}}^2 \frac{t-x_l}{x_k-x_l}+e^{-t}\sum\limits_{k=0}^2 \frac{\varepsilon_{k,3}}{e^{-x_k}}\sum\limits_{\substack{m=0\\ m\ne k}}^2 \frac{1}{x_k-x_m}\prod\limits_{\substack{l=0\\ l\ne k,m}}^2 \frac{t-x_l}{x_k-x_l}.
\end{aligned}
\end{equation}
Substituting $t=4$, and $\xx=(x_0,x_1,x_2)=(0,1,4)$, we are led to $$\left.(P_3(\xx,t))'_t\right\vert_{t=4}=-1+e^{-4}\cdot\frac{3}{4}+\frac{e^{-4}}{e^{-1}}+\frac{1}{4}+\frac{1}{3}\approx -0.3531,$$ which is strictly negative. See Figure 1.
\end{example}

\begin{figure}[ht]
\includegraphics[width=11cm]{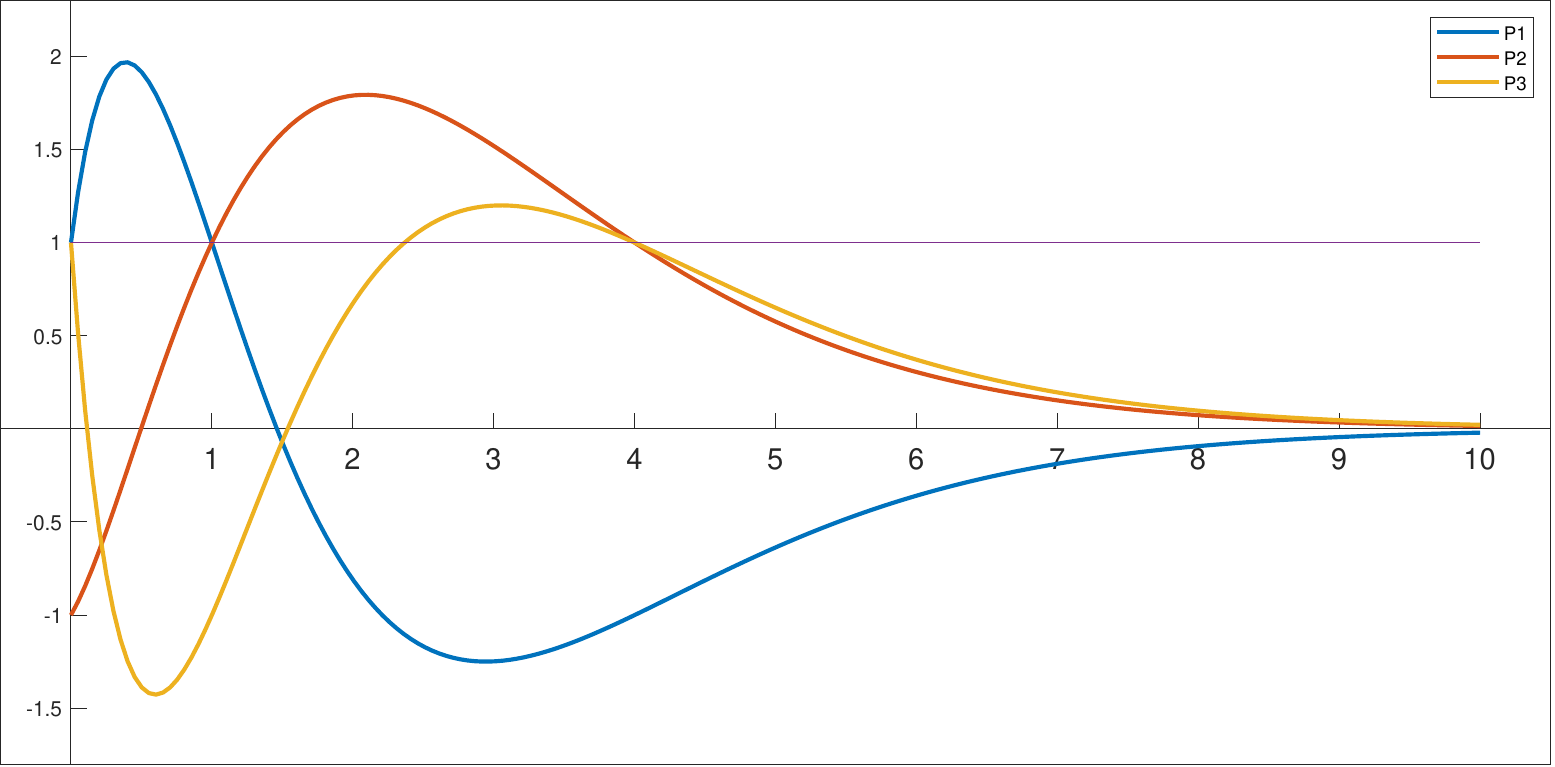}
\caption{}
\end{figure}

This means two things. First, it does not\footnote{Actually, the author himself records a warning about this problem in lines 5--6 of page 86 of \cite{K-AMH}, but later seems to forget about it and treats $z_{n+1}$ as interior to $(x_n,\infty)$, taking it for granted that, e.g., $(P_{n+1})'_t(z_{n+1})=0 $.} hold any more that $z_{n+1}$, i.e., the maximum location, is interior to its interval. Second, the formula (4) in \cite{K-AMH}, i.e., with the present notation $(P_i)'_t(\xx,z_i)=0$, ($i=1,\dots,n+1$) also fails in this case. It is crucial, because only (4) in \cite{K-AMH} and interior location of $z_{n+1}$ allows for the calculation of the derivatives according to (5) in \cite{K-AMH}, i.e., $\partial m_i(\xx)/\partial x_j=- h_j(\xx,z_i)(P_i)'_t(\xx,z_j)$. However, in this case continuity of $(P_{n+1})'_t(\xx,x_n)$ with respect to all nodes implies that $(P_{n+1})'_t(\xx,x_n)<0$ even for a neighborhood of the said node system $(0,1,4)$. Note that $P_{n+1}$ is a weighted polynomial (weighted by $\exp(-t)$) with alternating signs at the $x_i$, so that it has a maximal number of $n$ roots $\xi_i$, $i=1,\dots,n$ already in $(x_0,x_n)$, one in each $(x_{i-1},x_i)$, and hence it cannot have more. Consequently, its derivative has $n-1$ roots strictly between these roots. Finally, in view of the Lagrange mean value theorem, the derivative $(P_{n+1})'_t$ must be positive at some point $\eta \in (\xi_n,x_n)$, as $P_{n+1}(\xx,\xi_n)=0$ and $P_{n+1}(\xx,x_n)=1$. So for this point $\eta \in (\xi_n,x_n)$ we have $(P_{n+1})'_t(\xx,\eta)>0$, whereas $(P_{n+1})'_t(\xx,x_n)<0$, so that there is one more root $\zeta$ with $(P_{n+1})'_t(\xx,\zeta)=0$ between $\eta$ and $x_n$; again, we identified a maximal number of $n$ roots already in $(x_0,x_n)$ for $(P_{n+1})'_t$, hence there is no more, and $(P_{n+1})'_t<0$ all over $[x_n,\infty)$. It follows that $m_{n+1}(\xx)=P_{n+1}(\xx,x_n)=1$ identically for the said neighborhood of the node system $(0,1,4)$. Consequently, all partial derivatives $\partial m_{n+1}(\xx)/\partial x_j=0$, and the matrix $A$ has a vector ${\bf 0}$ in its last row and hence all $A_k$ are singular, save $A_{n+1}$.

A closer inspection of our previous example shows that for the node system $\xx=(0,1,4)$ one can find another system $\yy$ that violates the intertwining property. For instance, let 
$\yy=(0,1,5)$. Examining the maxima of the functions $P_i(\xx,t)$ and $P_i(\yy,t)$, we find that $m_i(\xx)\le m_i(\yy)$, $i=1,2,3$, which clearly contradicts the intertwining property. More precisely, the corresponding maxima take the following values:
\[
m_1(\xx)\approx 1.969<m_1(\yy)\approx 2.4, \quad m_2(\xx)\approx 1.794 < m_2(\yy)\approx 2.718, \quad m_3(\xx)=m_3(\yy)=1.
\]
See, the picture below, where $L_1:=L(\xx,t)$ and $L_2:=L(\yy,t)$.

\begin{figure}[ht]
\includegraphics[width=11cm]{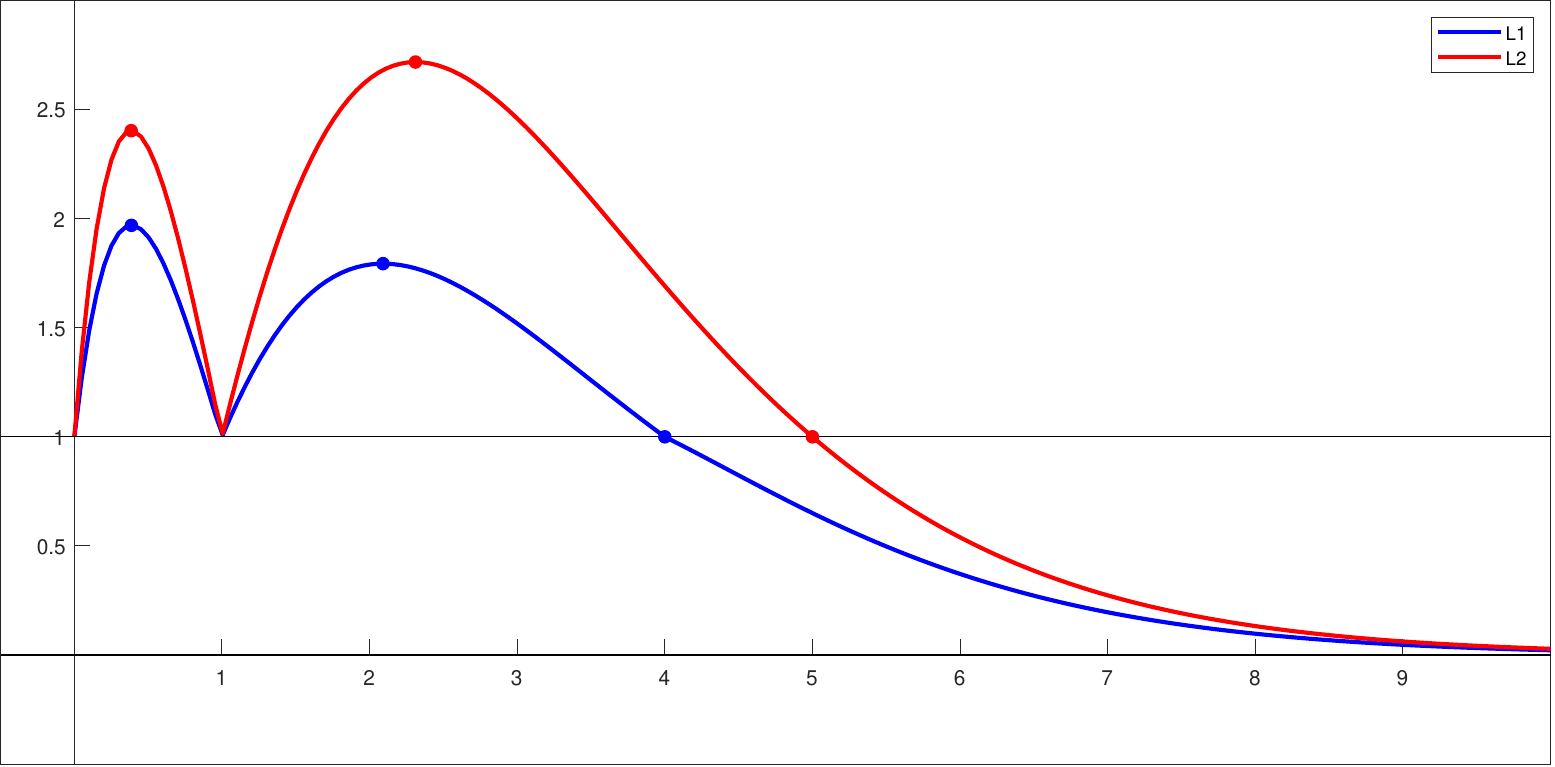}
\caption{}
\end{figure}

Let us remark here that it is not proved in \cite{K-AMH} that the other $z_j$ are unique, or that they stay interior to their intervals; also, there is no proof why $m_i$ would be differentiable with respect to the $x_j$. As long as the $z_i$ are (unique and) interior to their intervals, formula (4) holds, and the implicit function theorem can be applied to deduce continuous differentiability, but when a $z_j$ arrives at the endpoint, there is no guarantee that it remains continuously differentiable. It indeed \emph{seems} that for intervals $I_i$, enclosed by interpolation nodes at both ends, the respective Lebesgue function goes above 1 and hence the maximum point is not at any endpoint; but even if holding, it needs a proof\footnote{Also, the key formula (5) of the paper \cite{K-AMH} is not proved (it is called merely an observation). The closest to it in the literature were Lemma 2 in \cite{Kilgore-Cheney} and the Lemma on page 74 of \cite{Morris-Cheney}. As neither of them gives exactly what we are after, a more precise reference or a deduction from existing knowledge would be necessary. Note that the key papers \cite{Kilgore} and \cite{Kilgore-BullAMS} mention this only as a valid formula without giving any hint as to where it comes from; in the Conclusion section of \cite{Kilgore} the author refers to \emph{an earlier version of \cite{Kilgore-Cheney}}, which also seems meaning that in the public version it is missing.}.

\subsection{An example in case of a general weight}\label{sec:inner}

Whatever the situation may be with the weights $e^{-t}$ and $e^{-t^2}$, it is certainly not true for an arbitrary weight that the points $z_i$ lie in the interior of the corresponding intervals $I_i$ between interpolation nodes. Below, we construct a counterexample showing that  this property may well fail. 

\begin{example}
Let $\II:=[0,\infty)$, $(x_0,x_1)=(0,1)$ and $w(t):=\exp(-\sqrt{t})$. Then, $
h_0(t)=\exp(-\sqrt{t}) (1-t)$, $h_1(t)=\exp(1-\sqrt{t})t$, so that $P_1(\xx,t)= \exp(-\sqrt{t})[1+(e-1)t]$. It is easy to check that, conforming with the definitive properties, $P_1(\xx,0)=P_1(\xx,1)=1$. 

Differentiating, $(P_1)_t'(\xx,t)=\exp(-\sqrt{t})[e-1-\frac{e-1}{2}\sqrt{t}-\frac{1}{2\sqrt{t}}]$. Equating this to $0$ yields a quadratic equation whose roots are
$t_{1,2} = \left(1 \mp \sqrt{\frac{e-2}{e-1}}\right)^2 
\approx 0.12493 \quad \text{and} \quad 2.71112$.
Since the leading coefficient is negative, it follows that $P_1'(t) < 0$ for $t < t_1$ and for $t > t_2$, while $P_1'(t) > 0$ for $t_1 < t < t_2$.
Consequently, on $[0,1]$ the function $P_1$ attains its minimum at $t_1$, and its maximum is $1$, attained at the endpoints $t=0$ and $t=1$ (see Figure~3).
\end{example}

\begin{figure}[ht]
\includegraphics[width=11cm]{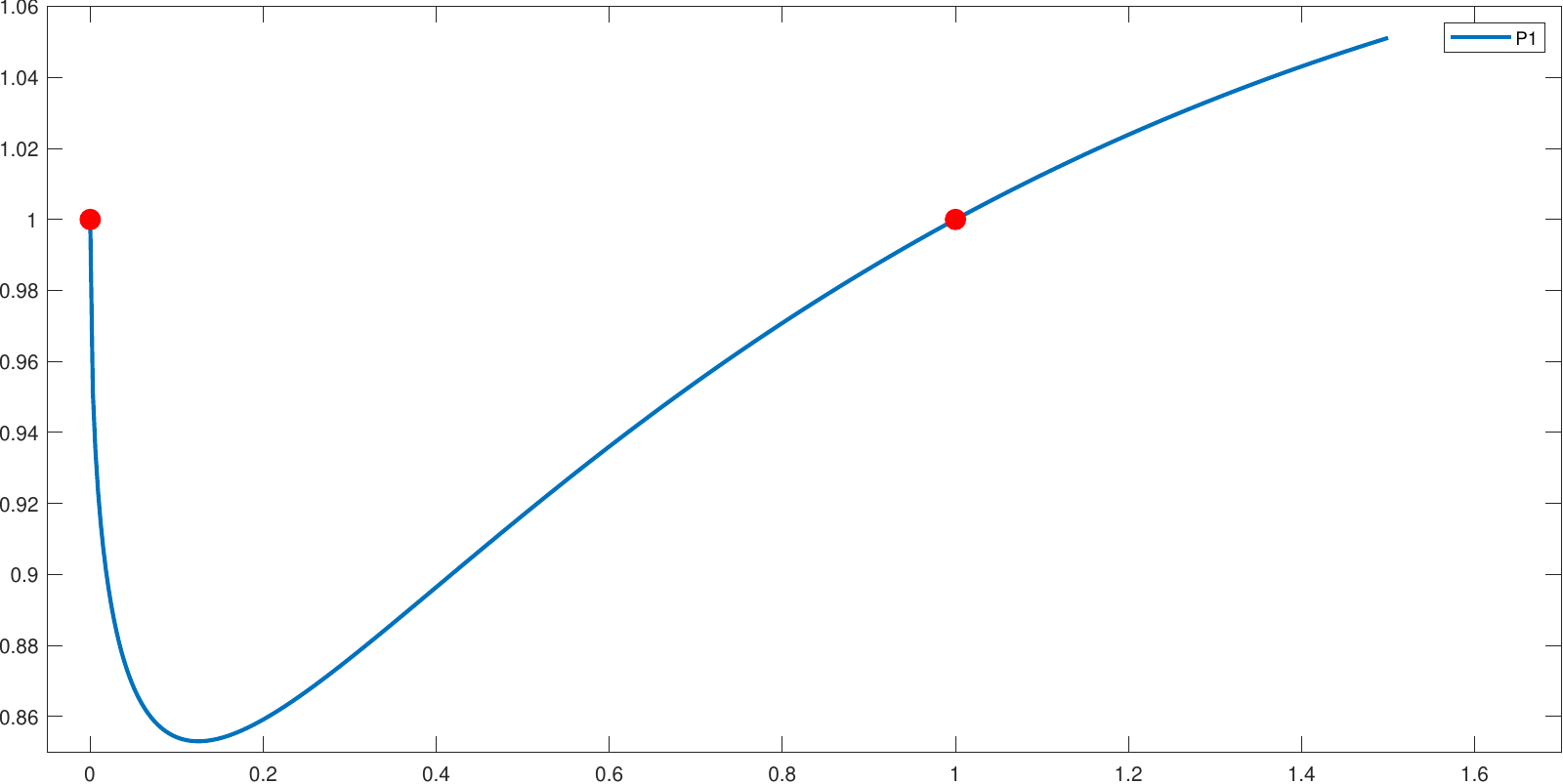}
\caption{}
\end{figure}

\subsection{Interpolation on the real line with Hermite weight}

Let us consider Hermite weights $w(t):=\exp(-t^2)$ on the real line together with weighted polynomials $Z_{n-1}:=\{pw~:~p\in\PP_{n-1}\}$. Theorem 4 of \cite{K-AMH} asserts that the Bernstein and Erd\H{o}s conjectures hold true also for interpolation in these weighted polynomial spaces, too.

However, similar problems as described in Section \ref{sec:exp} pertain to the case of the proof of Theorem 4 of \cite{K-AMH}, doubled by the fact that in that setup we have \emph{two infinite end intervals} not closed by interpolation nodes. So the same nullity of the gradient $\nabla m_i$ with respect to $\xx$ may (and does) occur even simultaneously for $i=1$ and $i=n+1$ for certain node systems. Then neither of $A_k$, nor even $A_{n+1}$ or $A_1$ (where at least one of the zero rows is omitted) can remain non-singular.  A counterexample for $n=3$ is as follows.

\begin{example}
Take $x_1=-c, x_2=0, x_3=c$, where $c:=\sqrt{C}$, and $C$ is the largest root of the equation $g(u)=\exp(-u)(4u-1)=1$. This function $g(u)$ is $-1$ at $u=0$, then it increases until its maximum place $u=5/4$, and then it decreases and tends to $0$ as $u\to \infty$. At its maximum $g(5/4)>1$, hence there are two positive roots $0<D<5/4<C$ of the equation; we take $C$ to be the larger one. Now with the above choice of $\xx$, the even function $f(t):=g(t^2)=\exp(-t^2)(4t^2-1)$ belongs to the space $Z_{n-1}$ of $\exp(-t^2)$-weighted polynomials of degree not exceeding $n-1=2$, and it has the property that $f(x_1)=1$, $f(x_2)=-1$ and $f(x_3)=1$. Given that $Z_{n-1}$ is a Chebyshev-Haar system, that means $f=P_1=P_4$, the end intervals' Lebesgue functions.  But, as is clear from the construction, these interval Lebesgue functions have maximums at their endpoints $x_1$, respectively $x_3$, with maximum value $1$, and everywhere in their interval $I_1=(-\infty,x_1]$ resp. $I_4=[x_3,\infty)$ they are strictly less than 1. Moreover, $(P_1)'_t(\xx,-c)>0, (P_4)'_t(\xx,c)<0$ (see Figure 4, where $d:=\sqrt{D}$). As before, it follows that $\nabla m_1 = \nabla m_4={\bf 0}$.
\end{example}

\begin{figure}[ht]
\includegraphics[width=11cm]{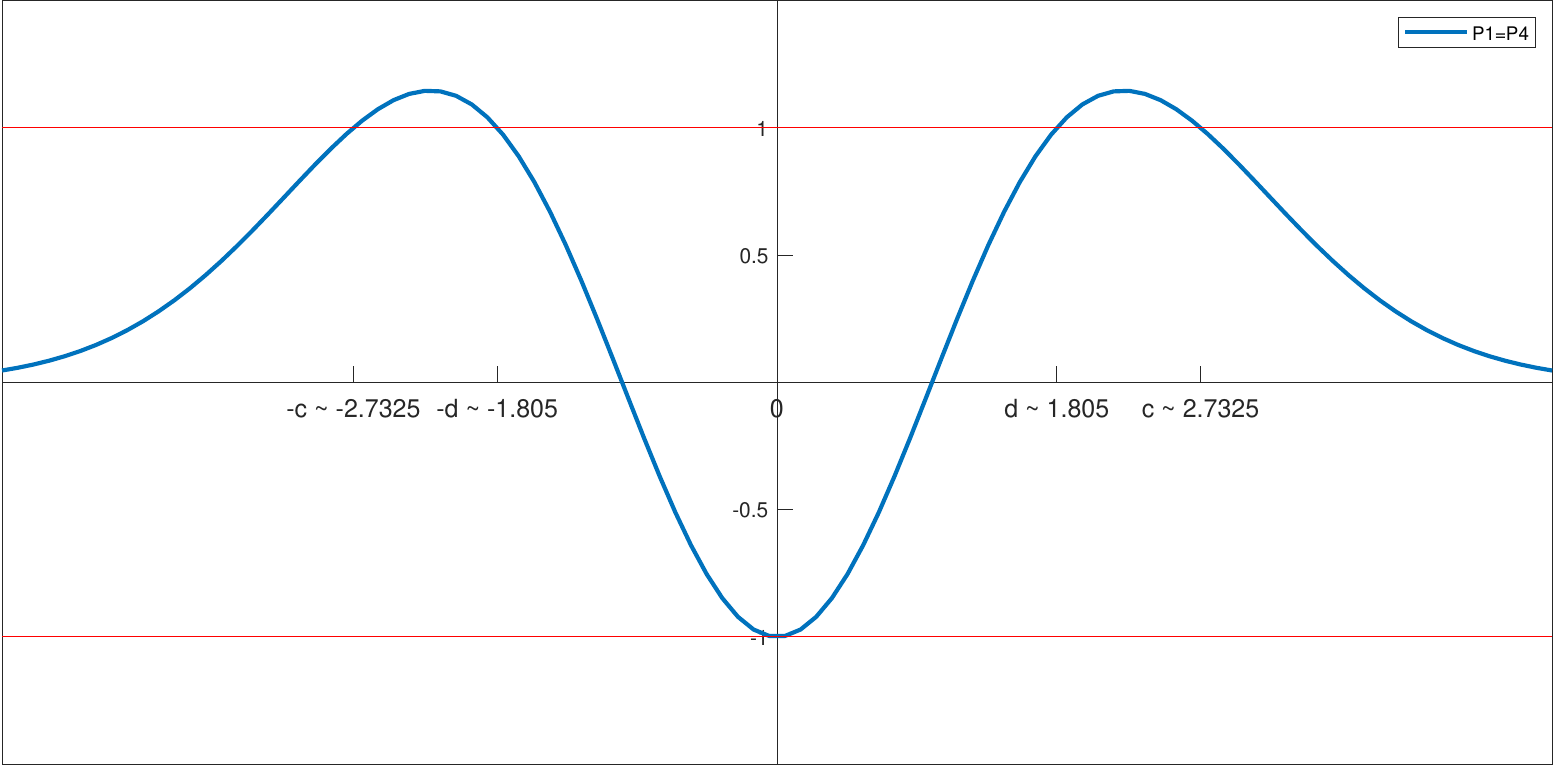}
\caption{}
\end{figure}

\section{Interpolation at infinity and "hybrid systems"}\label{sec:hybrid}

In the present section, we are going to introduce the precise terminology for the interpolation with the hybrid system $\YY_n$ and provide a detailed proof of the interlacing property of the roots of $P_i$ and the Markov-type inheritance theorem for $\YY_n$. As a consequence, we also obtain the Bernstein and Erdős theorem for the hybrid system. 

\subsection{Basics for the hybrid system}

First, we show that $\YY_n$ forms an extended Chebyshev–Haar system. A set of functions $\mathcal{F}$ is called a \emph{rank $n$ Extended Chebyshev-Haar system (or ECH system, ECHS)}, if any element of $\mathcal{F}$ either has at most $n$ real zeros (with their multiplicities counted) or is identically zero. (For more details, see \cite{Haar}, \cite{KarlinStudden} or \cite{MN}.) 

\begin{lemma}
The set $\YY_n:={\rm span}\{Y_n, {\bf 1}\}$ is a rank $n+1$ ECHS. 
\end{lemma}

\begin{proof}
Assume that $f\in \YY_n$ has at least $n+2$ roots with multiplicities. Then, $f'\in \WW_n$, i.e., it belongs to the rank $n$ ECHS $\WW_n$ whereas it has at least $n+1$ zeros, which is only possible if $f'$ is constant zero, i.e., $f$ is also constant, whence constant zero. 
\end{proof}

Throughout this paper, we shall refer to the elements of $\YY_n$ as hybrid polynomials. Let $f\in \YY_n$, i.e.,  
\begin{equation*}
f(t)=e^{-t}p(t)+c=e^{-t}(at^n+p_{n-1}(t))+c,
\end{equation*}
with $p\in \Pn$, $p_{n-1}\in \Pnn$ and $c\in\RR$. Then, by the degree of $f$ we mean the degree of $p$, i.e., $\deg f:= \deg p$; $c$ is called the constant term of $f$, and we say $a$ is the leading coefficient of $f$. (If we would like to emphasize that $c$ and $a$ belong to the element $f$, we will use the notations $c(f)$ and $a(f)$, respectively.) So, if we omit the constant term of $f$ and multiply by $e^t$, we get an algebraic polynomial, where $a$ is the coefficient of the $n$th power term. Therefore, in the current terminology --unlike in the case of algebraic polynomials-- the leading coefficient of a hybrid polynomial in $\YY_n$ is defined as the coefficient of the $n$th power term in the polynomial part, which can thus be zero.

We recall that in the case of the hybrid system $\YY_n$, we consider $\II:=[0,\infty)$ and the set of node systems is $S:=\{\xx=(x_1,\dots,x_n)\in \RR^{n}: 0=x_0<x_1<\dots<x_n<\infty\}$. We introduce the intervals $I_k=[x_{k-1},x_k]$ for every $k=1,\dots,n$, whereas $I_0:=(-\infty, x_0]$, and $I_{n+1}:=[x_{n},\infty)$.
We denote the elements of the classical, unweighted Lagrange interpolation basis by $\ell_k(t)$, i.e., we take $\lk(t):=\lk(\xx;t):= \prod_{j=0,~j\ne k}^{n} \left(\frac{t-x_j}{x_k-x_j}\right)$ for every $k=0,\ldots,n$. 

Let $h_k$ ($k=0,\dots, n+1$) stand for the $k$th interpolation basis element of the hybrid system $\YY_n$, i.e., for $k=0,\dots, n$, $h_k$ is defined by the formula \eqref{eq_hkt}
and the last element of the interpolation basis $h_{n+1}$ of $\YY_n$ is defined by:
\[
h_{n+1}(t):=1-\sum\limits_{k=0}^n h_k(t).
\]
Then, we get that $h_k\in\WW_{n}$ ($k=0,\ldots,n$), whereas $h_{n+1}\in \YY_n$; and  $h_k(x_j)=\de_{jk}$ for every $k,j=0,\dots,n+1$, where $h_k(x_{n+1}):=\lim_{t\to\infty} h_k(t)$.

The weighted Lebesgue function for the hybrid system is defined by
$$
L(w,\xx,t):=L(\xx,t):=\sum_{k=0}^{n+1} \left|h_k(t)\right|= \sum_{k=0}^{n+1} \vk(t) h_k(t),
$$
where $\vk(t):=\sign h_k(t)$, i.e.,
$$
\vk(t):=\begin{cases} (-1)^{k-i} \qquad & \textrm{if} \quad t\in \intt I_i, ~ 0 \le  i \le k
\\ (-1)^{k+1-i} & \textrm{if} \quad t\in \intt I_i, ~ k < i \le n+1\\
1& \textrm{if} \quad t=x_k, k=1,\dots,n\\
0& \textrm{if} \quad t=x_i, i\ne k.
\end{cases}
$$
Using the notation $\vki:=(-1)^{k+1-i+\chi_{i\le k}}\quad (0\le i\le n+1)$, (with $\chi_{i\le k}=1$, if $i\le k$ and otherwise it is 0); and restricting $L$ to the subinterval $I_i$  we get a hybrid polynomial
\begin{equation*}\label{Pi}
P_i:=L|_{I_i}= \sum_{k=0}^n \vki h_k+\varepsilon_{n+1,i}h_{n+1} \in \YY_n \quad (1\le i\le n+1).
\end{equation*}
We need to examine the extension of these functions to the whole interval $\RR$. Let $k\equiv n$ denote, that $k$ is congruent to $n$ modulo 2. Then, we get  
\begin{equation}\label{Pi2}
P_i=\sum\limits_{k=0}^n \vki h_k +\varepsilon_{n+1,i}h_{n+1}=\varepsilon_{n+1,i}\left(1-2\sum\limits_{\substack{k=0, \\k\equiv n, \text{ if } k\ge i\\ k\not\equiv n, \text{ if }k<i}}^n h_k\right).
\end{equation}

By construction, the values of $P_i$ at the interpolation nodes $x_0,\dots, x_n, x_{n+1}$ are
\begin{align}\label{Pixm}
P_i(\xx,x_m):=P_i(x_m)=
\begin{cases}
(-1)^{m-i+1}, & \text{if } m<i,\\
(-1)^{m-i}, & \text{if }  m\ge i,
\end{cases} 
\end{align}
where $P_i(\xx,x_{n+1}):=\lim\limits_{t\to \infty} P_i(\xx,t):=\lim\limits_{t\to \infty} P_i(t)$.

\subsection{Interlacing property of $P_i$'s}

Our aim is to show, that the hybrid polynomials $P_i$ have the interlacing property. According to \eqref{Pixm} $P_i(x_{i-1})=P_i(x_{i})=1$ ($i=1,\ldots, n$), also $P_{n+1}(x_n)=\lim\limits_{t\to \infty}P_{n+1}(t)=1$.

Moreover, the function values of $P_i$ $(i=1,\dots, n+1)$ have alternate signs on the $n+2$ nodes except for $x_{i-1}$ and $x_{i}$ ($i=1,\dots, n+1$), where the signs are both equal to 1. Thus, on the $n+2$ nodes, we have $n+1$ alternating signs and $n$ zeros in between the alternating nodes. Further, these zeros are unique and simple in their intervals, for otherwise to meet the required signs at the endpoint nodes, there would have to be at least three zeros (counted with multiplicities), altogether the number of zeros climbing to at least $n+2$, which contradicts the fact that $P_i$ belongs to the rank $n+1$ ECH system $\YY_n$. As already told, $P_i(x_{i-1})=P_{i}(x_{i})=1$ ($i=1,\ldots,n+1$) and hence for similar reasons as before, there is no zero in $I_i$.

Let $a_i:=a(P_i)$, 
i.e., it is the coefficient of the $n$th power term
\begin{equation}\label{aidef}
-2 \varepsilon_{n+1,i}\sum\limits_{\substack{k=0, \\k\equiv n, \text{ if } k\ge i\\ k\not\equiv n, \text{ if }k<i}}^n e^{x_k}\lk,
\end{equation}
and $b_k$ denote the leading coefficients of the Lagrange fundamental polynomials $\lk$. They are
\begin{equation}\label{bk}
b_k=\prod_{\substack{j=0\\j\ne k}}^n \frac{1}{x_k-x_j}=(-1)^{n-k} |b_k|, \quad k=0,\dots, n.
\end{equation}
Now to compute $a_i$ is easy from formulas \eqref{Pi2} and \eqref{aidef}
\begin{equation}\label{ai}
\begin{aligned}
a_i&= -2\vnni\sum\limits_{\substack{k=0, \\k\equiv n, \text{ if } k\ge i\\ k\not\equiv n, \text{ if }k<i}}^n e^{x_k}(-1)^{n-k}|b_k|\\
&=-2\vnni\sum\limits_{\substack{k=0, \\ k\not\equiv n}}^{i-1} e^{x_k}(-1)^{n-k}|b_k|-2\vnni\sum\limits_{\substack{k=i, \\ k\equiv n}}^{n} e^{x_k}(-1)^{n-k}|b_k|\\
&=2\vnni \left(\sum\limits_{\substack{k=0, \\ k\not\equiv n}}^{i-1} e^{x_k}|b_k|-\sum\limits_{\substack{k=i, \\ k\equiv n}}^{n} e^{x_k}|b_k| \right).
\end{aligned}
\end{equation}
Here, the first sum is counted for indices $k$, with $k\not\equiv n$ and then $(-1)^{n-k}=-1$, and similarly, in the second sum $k\equiv n$, i.e., $(-1)^{n-k}=1$. 

Consequently, we have that
\begin{align}
(-1)^{n+i+2}a_{i+1}&-(-1)^{n+i+1}a_i=2\left(\sum\limits_{\substack{k=0, \\ k\not\equiv n}}^{i} e^{x_k}|b_k|-\sum\limits_{\substack{k=i+1, \\ k\equiv n}}^{n} e^{x_k}|b_k| \right)\\ \notag
&-2\left(\sum\limits_{\substack{k=0, \\ k\not\equiv n}}^{i-1} e^{x_k}|b_k|-\sum\limits_{\substack{k=i, \\ k\equiv n}}^{n} e^{x_k}|b_k| \right)=4|b_i|e^{x_i}.
\end{align}

From this, it is clear that 

\begin{equation}\label{ai_order}
a_{n+1}>\cdots > (-1)^{n+1+i}a_i > \cdots > (-1)^{n+2} a_1.
\end{equation}

In particular, the first term in the chain, $a_{n+1}$ is positive, but we also have that $(-1)^{n+2}a_1<0$ because from the explicit expression \eqref{bk} $|b_0|<|b_1| <|b_1|e^{x_1}$ is also seen to hold. There exists a certain index\footnote{Applying  $(-1)^{n+2}a_1<0$, it means, that if $r=1$, then $(-1)^{n+2} a_1 < 0$ and $(-1)^{n+1+q} a_q>0$ for $q>1$.} $1 \le r \le n$ such that $(-1)^{n+1+s} a_s \le 0$ for $s\le r$ (and is strictly $<0$ for $s=1,\ldots,r-1$), and then $(-1)^{n+1+q} a_q>0$ for $q>r$.

Recall that we have already found $n$ simple zeros of $P_i$ for each $i=1,\ldots n+1$ in each $\intt I_k$, $k=1,\ldots, n+1$, save the very $I_i$. Since the functions $P_1,\ldots,P_{n+1}$ belong to an ECHS of rank $n+1$, $P_i$ can have at most one further zero. If $a_r=0$, then $P_r$ cannot have more than $n$ zeros--we found them all (as it cannot be identically zero for $P_r(x_r)=1$). For the remaining functions $P_i$, we know that $a_i\ne 0$, and we can identify their signs towards $-\infty$ from the leading coefficient. Indeed, we have $\lim_{t \to -\infty} P_i(t)/(e^{-t}t^{n}) = (-1)^{n} a_i$, hence for small values $- T$ we have
$$\sign P_i ( -T) = (-1)^{n} \sign a_i,$$
and from \eqref{Pixm}, we get that
\begin{equation*}
\sign P_i(x_0)=\sign P_i(0)=(-1)^{1-i}.
\end{equation*}

Therefore, if $(-1)^{n+1+i} a_i <0$ (i.e., if $i<r$ and even for $i=r$ if $a_r \ne 0$), then $\sign P_i(-T)= (-1)^{n+n+1+i-1}=(-1)^i$, whereas $P_i(x_0)=(-1)^{1-i}$, and there is one more sign change in $(-T,0)$, with one more zero of $P_i$. Moreover, if $(-1)^{n+1+i} a_i >0$ (i.e., if $i>r$), then $\sign P_i(-T)= \sign P_i(0)=(-1)^{1-i}$ and therefore, there are no more zeros in $(-\infty,0)$.

Altogether, we identified zeros of the hybrid polynomials $P_i$; for $i$ from $1$ up to $r-1$ they have $n+1$ zeros, such that one of their zeros are in $(-\infty,x_0)$, for $i=r$ either we have the same (if $a_r\ne 0$) or $P_r$ has $n$ roots only (if $a_r=0$), and for $i$ from $r+1$ up to $n+1$ they have $n$ simple real zeros as many as their degree. 

According to the above, we now define the index set $\Ji$ to contain all indices $k$ for which $ P_i$ has a root in $I_k$.  
Then we will have\footnote{In particular, $(-1)^{n+2}a_1<0$ guarantees that $0\in \mathcal{J}_1$.}
\begin{equation}\label{Jistardef}
\Ji=\begin{cases}
 \{0,1,\ldots,n+1\}\setminus \{i\} & \textrm{if} \quad 1\le i<r
\\  & \textrm{or} \quad i=r \quad \textrm{and} \quad {a_r} \ne 0;
\\ \{1,\ldots,n+1\}\setminus \{i\} & \textrm{if} \quad i=r \quad \textrm{and} \quad {a_r} = 0
\\  & \textrm{or} \quad  r<i\le n+1.
\end{cases}
\end{equation}

\subsection{Interlacing of roots of $P_i$} This part of the analysis follows the approach of \cite{CBoorPinkus} (see Lemma~1 therein), replacing algebraic polynomials with hybrid ones, whose zeros exhibit a somewhat different structure.  More precisely, the structure of the distribution of their zeros (i.e., the sets $\Ji$) has a different form. 

The following simple lemma will be used repeatedly in our argument.

\begin{lemma}\label{l:simple} Let $(\alpha,\beta]$ be any half-open half-closed interval and $\ff, \psi : (\alpha,\beta] \to \RR$ be continuous functions with $\psi(\beta)>0$. Denote $z$ any zero of $\ff$ and $w$ the largest zero of $\psi$.

Then if $f:=\ff - \psi >0$ on $(\alpha,\beta)$, then we have $z<w$.

In particular, if $w$ does not exist, that is, $\psi\ne 0$ on $(\alpha,\beta]$, then $z$ can not exist either, that is, $\ff \ne 0$. 
\end{lemma}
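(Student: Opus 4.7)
The statement is essentially an intermediate value theorem exercise, so my plan is to split it into three short observations.

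\textbf{Step 1: any zero of $\ff$ lies strictly inside $(\alpha,\beta)$.} First I would rule out $z=\beta$. If $\ff(\beta)=0$, then $f(\beta)=\ff(\beta)-\psi(\beta)=-\psi(\beta)<0$. On the other hand $f$ is continuous on $(\alpha,\beta]$ and strictly positive on $(\alpha,\beta)$, so continuity at $\beta$ forces $f(\beta)\ge 0$, a contradiction. Hence every zero of $\ff$ lies in $(\alpha,\beta)$.

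\textbf{Step 2: a zero of $\psi$ must appear to the right of $z$.} For $z\in(\alpha,\beta)$ with $\ff(z)=0$, the hypothesis $f(z)>0$ gives $\psi(z)=-f(z)<0$. Combined with $\psi(\beta)>0$ and continuity of $\psi$ on $[z,\beta]$, the intermediate value theorem yields a zero of $\psi$ in the open interval $(z,\beta)$. Since $w$ is the \emph{largest} zero of $\psi$, this immediately gives $w>z$.

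\textbf{Step 3: the ``in particular'' conclusion.} Suppose $\psi$ has no zero on $(\alpha,\beta]$. Continuity together with $\psi(\beta)>0$ then forces $\psi>0$ throughout $(\alpha,\beta]$. If $\ff$ had a zero $z$, Step~1 would place it in $(\alpha,\beta)$, and then Step~2 would produce a zero of $\psi$ in $(z,\beta)$, contradicting our standing hypothesis. Hence $\ff$ has no zero in $(\alpha,\beta)$; to finish, note that $f(\beta)\ge 0$ by continuity, so $\ff(\beta)=f(\beta)+\psi(\beta)>0$ as well, and $\ff$ has no zero in $(\alpha,\beta]$.

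There is no real obstacle here; the only subtlety is the endpoint behaviour at $\beta$, which is handled by the elementary continuity remark that $f>0$ on $(\alpha,\beta)$ implies $f(\beta)\ge 0$. Everything else is a direct application of the intermediate value theorem.
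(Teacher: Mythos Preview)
Your proof is correct and essentially equivalent to the paper's argument; both reduce to a single application of the intermediate value theorem, only organized from opposite ends (you start from a zero $z$ of $\ff$ and locate a zero of $\psi$ to its right, whereas the paper starts from $w$ and shows $\ff>0$ on $[w,\beta)$). Your treatment of the endpoint $\beta$ in Step~1 is in fact a bit more explicit than the paper's, which is a plus.
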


\begin{proof} Obviously, $w$ being the largest zero of $\psi$ and $\psi(\beta)>0$ implies that $\psi>0$ on the whole of $(w,\beta)$. As $\ff-\psi>0$ on $(\alpha,\beta)\supset (w,\beta)$, we must have here $\ff >\psi >0$, hence $z$ cannot belong to this interval. Further, $z=w$ is excluded, too in view of $\ff(w) >\psi(w)=0$. 

Obviously, if $w$ does not exist, then $\ff >\psi>0$ and $z$ cannot exist either.
\end{proof}

To deal with zeros of different $P_i$, let $y_k^{(i)}$ denote the unique zero of $P_i$ in $I_k$; we have such a zero precisely for $k \in \Ji$. Further, denote $\Kij:=(\Ji \cap \Jj) \setminus \{i+1,\ldots,j-1\}$ if $i<j$.

\begin{lemma}\label{l:zerosinsame} Let $1 \le i < j \le n+1$ be arbitrary. If $k \in \Kij$, then the zeros of $P_i$ and $P_j$ satisfy $y_k^{(j)} < y_k^{(i)}$; and if $i<k<j$, then $y_k^{(i)} < y_k^{(j)}  $. \end{lemma}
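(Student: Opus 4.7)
The plan is to adapt the zero-counting method used by de Boor and Pinkus in the proof of their Lemma~1 from~\cite{CBoorPinkus}, now within the ECHS $\YY_n$ of rank $n+1$. The key constraint throughout is that any nonzero $F\in\YY_n$ has at most $n+1$ real zeros counted with multiplicity.

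First, I would fix $1\le i<j\le n+1$ and study the auxiliary element $G:=P_i-P_j\in\YY_n$. Using \eqref{Pixm} to tabulate the nodal values $G(x_m)=P_i(x_m)-P_j(x_m)$, together with \eqref{Pi_inf} at $+\infty$ and the leading-coefficient sign analysis of the preceding subsection at $-\infty$, one obtains a complete sign record of $G$. The bookkeeping splits along the parity of $j-i$: for one parity $G$ vanishes at all ``outside'' nodes $x_m$ with $m<i$ or $m\ge j$ and alternates in sign at the ``inside'' nodes $x_i,\dots,x_{j-1}$, while for the other parity the pattern is complementary. In each case, pairing the forced nodal zeros and the sign changes with the ECHS ceiling of $n+1$ pins down how many zeros $G$ can have and in which intervals they must sit.

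Next, for each $k\in\Kij$ (so either $k<i$ or $k>j$), and separately for $i<k<j$, I would compare $y_k^{(i)}$ and $y_k^{(j)}$ by evaluating $G$ at $y_k^{(i)}$. Since $P_i(y_k^{(i)})=0$, we have $G(y_k^{(i)})=-P_j(y_k^{(i)})$, whose sign is dictated by the position of $y_k^{(i)}$ relative to the unique simple zero $y_k^{(j)}\in\intt I_k$ of $P_j$, combined with the known signs of $P_j$ at the endpoints $x_{k-1},x_k$ from \eqref{Pixm}. Matching this local sign with the global sign pattern of $G$ from the previous step forces $y_k^{(j)}<y_k^{(i)}$ when $k\in\Kij$, and $y_k^{(i)}<y_k^{(j)}$ when $i<k<j$. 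The opposite direction of the inequality in the two regimes reflects precisely the fact that on the ``inside'' intervals $G$ picks up an extra sign change compared with the ``outside'' ones.

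The hard part will be the bookkeeping in the sign analysis. The two parities of $j-i$ crossed with the three geometric positions of $k$ (namely $k<i$, $i<k<j$, $k>j$) yield several subcases, each requiring its own alignment of signs at the endpoints of $I_k$ and of $G$ at $y_k^{(i)}$. Particular care is needed for the side intervals $I_0=(-\infty,x_0]$ and $I_{n+1}=[x_n,\infty)$, where one ``endpoint'' is at infinity and the dominant behavior is governed either by the leading coefficients $a_i,a_j$ or by the limits \eqref{Pi_inf}; and for the exceptional case $i=r$ with $a_r=0$, where $P_r$ has only $n$ zeros and $0\notin\Ji$, so the count of forced zeros of $G$ must be adjusted accordingly.
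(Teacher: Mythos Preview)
Your plan contains a genuine gap. With the single auxiliary function $G=P_i-P_j$ you can determine the ordering of $y_k^{(i)}$ and $y_k^{(j)}$ only on those $I_k$ where $G$ has \emph{constant sign} on $\intt I_k$; as you correctly observe, this happens on the ``outside'' intervals when $j-i$ is even and on the ``inside'' intervals when $j-i$ is odd. In the complementary cases, namely $k\in\Kij$ with $j-i$ odd, or $i<k<j$ with $j-i$ even, your $G$ \emph{alternates} at $x_{k-1},x_k$ and therefore has a (unique) zero $\gamma_k\in\intt I_k$. There the device of evaluating $G(y_k^{(i)})=-P_j(y_k^{(i)})$ does not decide the question: a direct check (using $\sign P_i(x_{k-1})=\sign G(x_{k-1})=-\sign P_j(x_{k-1})$ in these subcases) shows that \emph{either} ordering $y_k^{(i)}<y_k^{(j)}$ or $y_k^{(j)}<y_k^{(i)}$ produces a sign sequence for $G$ along $x_{k-1},\,y_k^{(\cdot)},\,y_k^{(\cdot)},\,x_k$ with exactly one sign change, so the zero count for $G$ is satisfied in both scenarios and no contradiction arises. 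Thus half of the required intervals remain unresolved by $P_i-P_j$ alone.

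The paper closes this gap by working with \emph{two} auxiliary functions,
\[
G(t):=(-1)^{i+1}P_i(t)+(-1)^{j+1}P_j(t),\qquad H(t):=(-1)^{i+1}P_i(t)-(-1)^{j+1}P_j(t),
\]
whose nodal patterns are independent of the parity of $j-i$: this $G$ always vanishes at $x_i,\dots,x_{j-1}$ (hence, after the zero count, has constant sign on each inside $\intt I_k$), while $H$ always vanishes at the outside nodes (hence has constant sign on each outside $\intt I_k$). The comparison on each $I_k$ is then extracted not by evaluating at $y_k^{(i)}$ but via the small ordering principle of Lemma~\ref{l:simple}: if $\ff-\psi>0$ on $(\alpha,\beta)$ and $\psi(\beta)>0$, then every zero of $\ff$ precedes the largest zero of $\psi$. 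Applying this with suitable sign-adjusted copies of $P_i$ and $P_j$ yields the strict inequalities in both regimes. The minimal repair of your approach is to introduce the companion combination $P_i+P_j$ for the ``alternating'' subcases; the paper's sign-adjusted pair is simply the tidier way to make the two regimes parity-free.
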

Note that $\{i+1,\ldots,j-1\} \subset \Ji \cap \Jj$ and hence the two index sets in the conditions sum up precisely to $\Ji \cap \Jj$.

\begin{proof}
 We will consider two auxiliary functions and consider their sign changes and zeros. Namely, let for any $1\le i<j\le n+1$
\begin{align*}
G(t):= (-1)^{i+1} P_i(t)+(-1)^{j+1} P_j(t), \\
H(t):= (-1)^{i+1} P_i(t)-(-1)^{j+1} P_j(t).
\end{align*}

Applying \eqref{Pixm} we know the values of $G$ at the nodes $x_0,\ldots, x_n$, 
\begin{align}\label{gxk}
 G(x_m) = (-1)^{i+1} P_i(x_m)+(-1)^{j+1} P_j(x_m)
=\begin{cases}
2(-1)^{m+1}, & \text{ if } i<j\le m\\
2(-1)^m, & \text{ if } m<i<j,\\
0, & \text{ otherwise}.
\end{cases} 
\end{align}
By \eqref{Pixm}, we get that $\lim_{t\to \infty} P_i(t)=(-1)^{n-i+1}$, for every $i=0,\dots, n$, which implies that $\lim_{t\to \infty} G(t)=2(-1)^n$ and $\lim_{t\to \infty} H(t)=0$. 

Similarly, as in the case of $G$, for every $m=0,\dots,n$, we get that 
\begin{equation}\label{hxk}
\begin{aligned}
H(x_m) = \begin{cases}
2(-1)^{m+1}, & \text{ if } i\le m<j,\\
0, & \text{ otherwise}.
\end{cases}
\end{aligned}
\end{equation}

Let us consider first the function $G$. We know that it is an element of $\YY_n$, and hence $G$ can have at most $n+1$ roots (if the leading coefficient $a(G)\ne 0$), or at most $n$ (if $a(G)=0$). 

From \eqref{gxk} we get that if $j=n+1$ and $i=n$, then $\sign G(x_{m-1})=-\sign G(x_{m})$, for every $m=1,\dots,n-1$, so $G$ has $n$ strictly alternating signs, and also $G(x_n)=0$. Altogether, it yields $n-1+1=n$ zeros of $G$. 

If $j=n+1$ and $i<n$, then $\sign G(x_{m-1})=-\sign G(x_{m})$ for $m=1,\dots,i-1$, which implies $i-1$ roots of $G$, and besides them there are $n-i+1$ zeros of $G$ at $x_i, \dots, x_n$. Altogether, it also means $n$ zeros of $G$.

Otherwise, for $j<n+1$ $\sign G(x_{m-1})=-\sign G(x_{m})$, if  $m\in \{1,\dots, i-1\}\cup \{j+1,\dots,n\}$ and $G(x_m)=0$ for $m=i,\dots, j-1$. Therefore, we obtain $i-1+n-j$ zeros in $\intt I_1, \ldots \intt I_{i-1}, \intt I_{j+1},\dots,\intt I_{n}$, and $j-i$ zeros exactly at $x_{i},\ldots,x_{j-1}$. Altogether, this amounts to $n-1$ zeros.
However, we can apply \eqref{gxk}, which implies that $G(x_n)=2(-1)^{n+1}$, whereas $\lim_{t\to \infty} G(t)=2(-1)^n$. Then there has to be an additional zero of $G$, in $(x_n,\infty)=I_{n+1}$. 

Consequently, for every $1\le i<j\le n+1$ we found $n$ zeros of $G$. 

In the following argument, we need the fact that $G$ has precisely $n$ zeros in the half-
line $(0,\infty)$ irrespective of the various subcases considered above. Indeed, we have found $n$ zeros in all cases, and $G(x_0)=2$ according to \eqref{gxk}, whereas $\lim_{t\to \infty} G(t)=2(-1)^n$, so the number of zeros in $(0,\infty)$ must be of the same parity as $n$ and therefore is $n$. In particular, $G$ has constant sign in the interior of each interval $I_k$ for $k=i+1,\dots, j-1$ (and in fact also for $k=i$ and $k=j$).

So let now $I_k=[x_{k-1},x_{k}]$ be any of the intervals with $i<k<j$. We know that there are sign changes and hence zeros of $P_i$ and $P_j$ in $I_k$. Also, $G(x_{k-1})=G(x_{k})=0$, and $G$ has no more zeros in $I_k$, so that it has constant sign on $(x_{k-1},x_{k})=\intt I_k$. 
Our aim is to get the sign of $G$ on $\intt I_k$. Firstly, \eqref{gxk} implies that $G(x_{i-1})=2(-1)^{i-1}$
 and then the sign of $G$ is $(-1)^{k-i}(-1)^{i-1}=(-1)^{k-1}$ 
on $\intt I_k$.
Consider now, $f:=(-1)^{k-1}G$ 
on $I_k$. It is zero at the endpoints $x_{k-1}$ and $x_k$ and positive inside the interval. Moreover,
\begin{equation*}
f=(-1)^{k-1}G=(-1)^{k+i}P_i+(-1)^{k+j}P_j=(-1)^{k+i}P_i-(-1)^{k+j+1}P_j
\end{equation*}
and applying \eqref{Pixm} (and $k<j$), we get that

\begin{equation*}
(-1)^{k+j+1}P_j(x_k)=(-1)^{k+j+1}(-1)^{k-j+1}>0.
\end{equation*}

\bigskip

Now as $\ff:=(-1)^{k+i}P_i$ 
and $\psi:=(-1)^{k+j+1}P_j$
satisfy the conditions of the simple Lemma \ref{l:simple} for $I_k$, and they both have only one zero $y^{(i)}_k$ and $y_k^{(j)}$ (the zeros of $P_i$ and $P_j$) in $I_k$, respectively, we are led to $y^{(i)}_k<y_k^{(j)}$, as wanted.

\medskip

For the remaining $k \in \Kij$, we argue similarly using $H$. We note, that the constant term of $H$ is $c(H)=0$, i.e., $H\in \WW_n$ and hence $\deg H\le n$. We note that this part proceeds analogously to the polynomial case, as in Lemma 1 of de Boor and Pinkus \cite{CBoorPinkus}. However, for the sake of completeness, we now present the proof in the present setting.

We show that $\deg H=n$. Let us first consider the case when $i+1<j<n+1$. Applying \eqref{hxk} we have that $H(x_0)=\dots=H(x_{i-1})=0$, $H(x_{j})=\dots=H(x_n)=0$, while $H(x_m)=2(-1)^{m+1}$ for $m=i,\ldots,j-1$, giving a strict alternation of signs on $j-i$ points and hence $j-i-1$ zeros in $(x_{i},x_{j-1})$. Altogether, these amount to $i+ n-j+1 + j-i-1= n$ zeros, thus we have identified a maximal possible number of zeros for the weighted polynomial $H$, all simple, with no further ones existing. Similarly, one can check easily, that $H$ has also $n$ zeros in the particular cases when $i+1=j\le n+1$ (namely, at $\{x_0,\dots,x_n\}\setminus\{x_i\}$); and when $i<n<j=n+1$ ($i$ zeros at $x_0,\dots, x_{i-1}$ and $n+1-i-1=n-i$ zeros in $(x_i,x_n)$). This entails that $H$ has a constant sign between these found zeros, and also before the first and after the last zero. 

Since $H(x_i)=2(-1)^{i+1}$, we also get that $\sign H=(-1)^{i+1}(-1)^{k-i}=(-1)^{k+1}$ on $I_k$ for $k=0,\dots, i-1$ and hence $(-1)^{k+1}H>0$ on $\intt I_k$ for all $k=0,\ldots,i-1$. Similarly, $H(x_{j-1})=2(-1)^j$ and it implies that $\sign H=(-1)^{k}$ on $I_k$ for $k=j+1,\dots,n+1$.

Regarding the endpoints, we apply \eqref{Pixm} and find
\begin{align*}
&(-1)^{i+1}P_i(x_{k})=(-1)^{j+1}P_j(x_{k})=(-1)^{k}, 
\qquad (k=0,\ldots, i-1),
\\[1mm]
&(-1)^{i+1}P_i(x_{k})=(-1)^{j+1}P_j(x_{k})=(-1)^{k+1}, 
\quad (k=j+1,\ldots, n+1).
\end{align*}
First let us consider $I_k$ with $0 \le k<i$ and put $f:=(-1)^{k+1} H$, $\ff=(-1)^{k+j+1}P_j$ and $\psi:=(-1)^{k+i+1}P_i$, so that $f=\ff-\psi>0$ on $\intt I_k$. We have for the right endpoint $\psi(x_{k})= (-1)^{k+i+1}P_i(x_{k})=(-1)^{k+i+1}(-1)^{k+i+1}=1>0$ for $k<i$, hence Lemma \ref{l:simple} furnishes that the root of $P_j$ comes before the root of $P_i$, that is, $y_k^{(j)}<y_k^{(i)}$ for $k=1,\ldots,i-1$, and, if there are zeros of both in $I_0$, then also for $k=0$, since $\psi(x_0)>0$ and $f=\ff-\psi>0$ on $\intt I_0$. Here we have zeros for both if and only if $0 \in \Ji$ and $0\in \Jj$, that is, precisely when $0 \in \Kij$.

Now let us consider the case $j<k \le n+1$. Now we put $f:=(-1)^{k}H$, $\ff:=(-1)^{k+j}P_j$ and $\psi:=(-1)^{k+i}P_i$, so that $f:=\ff-\psi >0$ on $I_k$. Further, now $\psi(x_{k})=(-1)^{k+i} P_i(x_{k})=(-1)^{k+i+k+i+2}=1>0$ ($j<k\le n$) and for any $j<k<n$  Lemma \ref{l:simple} applies again. Therefore, the inequality $y_k^{(j)}<y_k^{(i)}$ for $k=j+1,\ldots,n$ obtains. Further, if $n+1 \in \Ji, \Jj$, then there are roots of both hybrid polynomials $P_i$ and $P_j$. In this case we indeed see that $\lim_{t \to +\infty} \psi(t) = \lim_{t \to +\infty} (-1)^{n+1+i} P_i(t) = \lim_{t \to +\infty} (-1)^{n+1+i} (-1)^{n+1+i} = +1$. Therefore, for some large enough $T$ with $\psi(T)>0$ we can apply Lemma \ref{l:simple} once more to infer the same result also for $k=n+1$.
\end{proof}

\begin{lemma}\label{l:interlacing} For any $1\le i < j \le n+1$, the zeros of $P_i$ and $P_{j}$ strictly interlace, that is, strictly between any two consecutive (simple) zeros of one of the hybrid polynomials, there is exactly one (simple) zero of the other, too. Moreover, in the cases where $i<r<j$, or $i=r<j$ with $a_r\ne 0$, or $i<j=r$ with $a_r=0$, we have $0\in \Ji$ and $0\notin \Jj$. Hence, the chain of interlacing roots begins with the leftmost root of $P_i$; in all remaining cases, it starts with the first root of $P_j$.
\end{lemma}

\begin{proof} Consider first the case when both $P_i, P_j$ have a zero in $I_0$. This is the case when $ i<j< r(\le n)$, and if $a_r\ne 0$, then also if $i<j=r(\le n)$. Applying Lemma \ref{l:zerosinsame} to $i, j$ we get that $y_0^{(j)}<y_0^{(i)}$, i.e., the first root of $P_j$ precedes the roots of $P_i$. We also have zeros of both hybrid polynomials $P_i$, $P_j$ for all $I_k$, $(1\le k\le n+1, k\ne i,j)$ and  Lemma \ref{l:zerosinsame} guarantees again that:
\begin{equation}\label{roots}
\begin{aligned}
&y_k^{(j)}<y_k^{(i)}, \quad \textrm{for all} \quad 1\le k \le i-1 \text{ and } j+1<k\le n+1\\
&y_k^{(i)}<y_k^{(j)}, \quad \textrm{for all} \quad i+1\le k \le j-1.
\end{aligned}
\end{equation}
However, concerning the intervals $I_i$ and $I_j$, we know that $y_{i}^{(i)}$, respectively $y_{j}^{(j)}$ do not exist. Therefore, depending on the values of $i$ and $j$, we have one of the following chains of inequalities (around the intervals $I_i$ and $I_j$):
\begin{equation}\label{roots_around_xi}
\left.
\begin{aligned}
y_{i-1}^{(j)}&<y_{i-1}^{(i)}<x_{i-1}<y_{i}^{(j)}<x_{i} < y_{i+1}^{(i)} < y_{i+1}^{(j)}< \dots \\
\dots&<x_{j-1}<y_{j}^{(i)}<x_{j}<y_{j+1}^{(j)} \end{aligned} \right\} \quad \text{ if $j>i+1$, }
\end{equation}
\begin{equation}\label{roots_around_xi2}
y_{i-1}^{(j)}<y_{i-1}^{(i)}<x_{i-1}<y_{i}^{(j)}<x_{i} < y_{i+1}^{(i)} < x_{i+1}< y_{i+2}^{(j)}, \quad \text{ if } j=i+1.
\end{equation}
So we get the desired interlacing property for the roots of $P_i$ and $P_j$.

Assume now, that $1\le i<j=r(\le n)$ such that $a_r=a_j=0$. Then, according to \eqref{Jistardef} we get that $\Ji=\{0,\dots,n+1\}\setminus\{i\}$, and $\Jj=\{1,\dots,n+1\}\setminus\{j\}$. Therefore, applying Lemma \ref{l:zerosinsame}, we get that the inequalities listed in \eqref{roots} and the corresponding chain(s) of inequalities from \eqref{roots_around_xi} and \eqref{roots_around_xi2} are satisfied.
Moreover, since $0\in \Ji$, but $0\notin \Jj$, we also have
\begin{equation}\label{beginning}
y_0^{(i)}<x_0<y_1^{(j)},
\end{equation}
and hence the chain of the interlacing roots begins with the first root of $P_i$.
Altogether, in this case, we also get the interlacing property of $P_i$ and $P_j$ on $\RR$, too.

Now, consider the cases $i< r$ or $i=r$ such that $a_r\ne0$ and $r<j\le n+1$. Then, $0\in \Ji$ and $0\notin \Jj$, and each inequality in \eqref{roots} and \eqref{beginning}, moreover the corresponding chain(s) of inequalities from \eqref{roots_around_xi} and \eqref{roots_around_xi2} are satisfied. We recall that $n+1\notin \Jj$ if and only if $j=n+1$. Moreover, then the second inequality of \eqref{roots} holds for every $k\in\{i+1,\dots,n+1\}$.  Therefore, concerning the last two intervals  $I_n, I_{n+1}$, we get one of the following chains of inequalities
\begin{equation}\label{ending}
\begin{aligned}
&x_{n-1}<y_{n}^{(j)}<y_{n}^{(i)}<x_n<y_{n+1}^{(j)}<y_{n+1}^{(i)}, \quad &\text{ if } j<n,\\
&x_{n-1}<y_{n}^{(i)}<x_n<y_{n+1}^{(j)}<y_{n+1}^{(i)}, \quad &\text{ if } j=n,\\
&x_{n-1}<y_{n}^{(i)}<y_{n}^{(j)}<x_n<y_{n+1}^{(i)}, \quad &\text{ if } i<j=n+1,\\
&x_{n-1}<y_{n}^{(j)}<x_n<y_{n+1}^{(i)}, \quad &\text{ if } i=r=n<j=n+1 \text{ such that } a_r\ne 0.
\end{aligned}
\end{equation}
These observations guarantee that the first root of $P_i$ precedes the roots of $P_j$ and also the interlacing property of the roots of $P_i$ and $P_j$, when $i<r<j$ or if $i=r<j\le n+1$, $a_r\ne 0$.

Finally, we focus on the cases $r< i<j \le n+1$ or $i=r<j\le n+1$ with $a_r=0$. Then, \eqref{Jistardef} implies for these cases, that $0\notin \Ji$ and $0\notin \Jj$. Applying again Lemma \ref{l:zerosinsame} we have that inequalities appearing in \eqref{roots} and the corresponding chain of inequalities \eqref{roots_around_xi} or \eqref{roots_around_xi2} around $x_i$ and $x_j$ follow. Concerning the end of the chain of the roots, we have that
\begin{equation*}
\begin{aligned}
&x_{n-1}<y_{n}^{(j)}<y_{n}^{(i)}<x_n<y_{n+1}^{(j)}<y_{n+1}^{(i)}, &\text{ if } i<j < n,\\
&x_{n-1}< y_{n}^{(i)}<x_n<y_{n+1}^{(j)}<y_{n+1}^{(i)}, &\text{ if } i<j = n,\\
&x_{n-1}<y_{n}^{(i)}<y_{n}^{(j)}<x_n<y_{n+1}^{(i)}, \quad &\text{ if } i<n, j=n+1,\\
&x_{n-1}<y_{n}^{(j)}<x_n<y_{n+1}^{(i)}, \quad &\text{ if } i=n, j=n+1.
\end{aligned}
\end{equation*}
Moreover, in the case $1=r=i$, we know that $a_1=a_r\ne 0$ (since $(-1)^{n+1}a_1<0$), which has already been treated in the previous case ($i=r<j$ with $a_r\ne 0$). Therefore, it remains to consider the cases $(1\le) r<i$ or $1<r\le i$ with $a_r=0$. In both cases, we have  $0\notin \Ji\cup \Jj$, while $1\in \Ji\cap \Jj$ and $y_1^{(j)}<y_1^{(i)}$. Hence, the chain of interlacing roots begins with the smallest root of $P_j$ whenever  $r< i<j \le n+1$ or when $i=r<j\le n+1$ with $a_r=0$.

Consequently, we get the desired interlacing property of roots of every pair $P_i$ and $P_j$. 
\end{proof}

\begin{corollary}\label{c:order_roots_total}
Finally, we get that the roots $y_k^{(i)}$ ($k=0,\dots,n$, $i=1,\dots,n+1$) have the following order:
\begin{align}\label{order_total}
(y_0^{(r)})<~y_0^{(r-1)}&<\dots<y_0^{(1)}< x_0 <y_1^{(n+1)}<y_1^{(n)}<\dots<y_1^{(2)}< x_1 \notag
\\ & < \dots < y_n^{(n+1)}< x_n <y_{n+1}^{(n)}<\dots<y_{n+1}^{(1)},
\end{align}
where $y_0^{(r)}$ exists, if and only if $\deg P_r=n$.
\end{corollary}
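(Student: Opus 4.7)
The plan is to verify \eqref{order_total} interval by interval: for each $I_k$ I will identify the set of indices $i$ such that $y_k^{(i)}$ exists and then invoke Lemma \ref{l:zerosinsame} to order these zeros among themselves. The inclusion $y_k^{(i)} \in \intt I_k$ already supplies the interlacing with the bracketing nodes $x_{k-1}$ and $x_k$, so only the intra-interval ordering requires a short argument.

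First I would handle $I_0$. By \eqref{Jistardef}, $0 \in \Ji$ precisely when $1 \le i \le r-1$, or $i = r$ with $a_r \ne 0$ (equivalently $\deg P_r = n$). For any two such indices $i < j$, the condition $0 \in \Kij = \Ji \cap \Jj \setminus \{i+1, \ldots, j-1\}$ is automatic because $i \ge 1$ forces $0 \notin \{i+1, \ldots, j-1\}$. The first clause of Lemma \ref{l:zerosinsame} then gives $y_0^{(j)} < y_0^{(i)}$, and chaining produces $y_0^{(r)} < y_0^{(r-1)} < \cdots < y_0^{(1)} < x_0$, with the leading term present iff $\deg P_r = n$.

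Next, for any $I_k$ with $1 \le k \le n+1$, \eqref{Jistardef} shows that $k \in \Ji$ for every $i \in \{1, \ldots, n+1\} \setminus \{k\}$. I will split these indices into the low block $\{1, \ldots, k-1\}$ and the high block $\{k+1, \ldots, n+1\}$. For two low indices $i < j < k$ we have $k > j - 1$, so $k \in \Kij$, and Lemma \ref{l:zerosinsame} yields $y_k^{(j)} < y_k^{(i)}$; symmetrically, two high indices $k < i < j$ satisfy $k < i + 1$, so again $k \in \Kij$ and $y_k^{(j)} < y_k^{(i)}$. For a mixed pair $i < k < j$, the second clause of Lemma \ref{l:zerosinsame} applies and gives $y_k^{(i)} < y_k^{(j)}$. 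Combining these three kinds of comparisons produces
\[
y_k^{(k-1)} < \cdots < y_k^{(1)} < y_k^{(n+1)} < \cdots < y_k^{(k+1)},
\]
which collapses to the pure high chain at $k = 1$ and to the pure low chain at $k = n+1$, matching the respective pieces of \eqref{order_total}.

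The main obstacle is not conceptual but combinatorial: one has to verify that every pair $(i, j)$ in every $I_k$ falls into exactly one of the two clauses of Lemma \ref{l:zerosinsame}, and to track the exceptional absence of $y_0^{(r)}$ when $a_r = 0$. Once this bookkeeping is in place, gluing the per-interval chains together with the ambient node ordering $x_0 < x_1 < \cdots < x_n$ yields precisely \eqref{order_total}.
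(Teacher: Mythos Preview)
Your argument is correct and is essentially the approach the paper has in mind: the corollary is stated without proof immediately after Lemma~\ref{l:interlacing}, relying implicitly on the per-interval comparisons of Lemma~\ref{l:zerosinsame}, which is exactly what you invoke. Your explicit case split (low block, high block, mixed pair) cleanly verifies that every relevant pair falls under one of the two clauses of Lemma~\ref{l:zerosinsame}, and the resulting intra-interval chain $y_k^{(k-1)}<\cdots<y_k^{(1)}<y_k^{(n+1)}<\cdots<y_k^{(k+1)}$ together with the node ordering gives \eqref{order_total}.
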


\subsection{The inheritance law for derivative roots of oscillating and nearly oscillating hybrid polynomials}\label{nearly_osc}

We want to consider the derivatives of oscillating hybrid polynomials $f \in \YY_n$ with a maximal number of $n+1$ roots in $\RR$, necessarily all simple. The family of these hybrid polynomials will be denoted as $\OO$. Further, we will need to consider "nearly oscillating hybrid polynomials" of $\YY_n$ -- whose family will be denoted by $\N$-- which have $n$ real and simple roots. Note that nearly oscillating hybrid polynomials from $\N$ comprise oscillating hybrid polynomials of degree $n-1$, i.e., ${\mathcal O}_{n-1} \subset \N$, and also weighted polynomials from $\W_n$ having $n$ zeros belong to $\N$.

The first case is relatively simple: if $f:=at^n+p+c \in \YY_n$ ($a,c \in \RR$ and $p\in \Pnn$) has $n+1$ roots, then its derivative $f' \in\WW_n$ has $n$ simple roots, exactly one between any two consecutive zeros of $f$, and that's it. For the nearly oscillating hybrid polynomials, the situation is a bit different. First, we need to analyze when this situation occurs.

If the constant term $c$ is zero, then $f \in \WW_n$, too, and $f$ can have exactly $n$ simple zeros only if $a\ne 0$. If $\deg f=n-1$, yet there are exactly $n$ simple zeros of $f$, then we must have $c \ne 0$. 
Finally, there are cases when $f\in\YY_n$ can have $n$ zeros with $\deg f=n$ (i.e., $a\ne 0$), and the constant term is nonzero. Concerning the last two cases, there is a relationship between the signs of the leading coefficient $a$ and that of the constant term $c$. 

\begin{lemma}\label{l:coeff_a_c}
Let $f\in \YY_n\setminus \YY_{n-1}$.

\begin{itemize}
\item[a)] If $f$ has $n$ real roots, then $\sign (ac)\ge 0$. 
\item[b)] If $f$ has $n+1$ real roots, then $\sign (ac)<0$.
\end{itemize}
\end{lemma}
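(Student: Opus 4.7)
My plan is a short sign-counting argument at the two ends $\pm \infty$ of the real line, combined with the ECHS bound of at most $n+1$ zeroes (with multiplicity) for any nonzero element of $\YY_n$.

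First I would read off the asymptotic signs of
\[
f(t) = e^{-t}\bigl(at^n + p_{n-1}(t)\bigr) + c, \qquad a \ne 0.
\]
At $+\infty$ the exponential kills the polynomial factor, so $f(t) \to c$; hence $\sign f(t) = \sign c$ for large $t$ when $c \ne 0$, while if $c = 0$ then $f(t) = e^{-t}p(t) \sim a t^n e^{-t}$ and $\sign f(t) = \sign a$ for large $t > 0$. At $-\infty$ the factor $e^{-t}$ diverges and $p(t) \sim a t^n$ dominates the additive constant $c$, so $f(t) \sim a t^n e^{-t}$ and $\sign f(t) = (-1)^n \sign a$ for $-t$ large enough.

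Next I would translate the given real-root count $N$ into a sign-change count for $f$ on $\RR$. Only odd-multiplicity roots produce genuine sign changes, but since the multiplicities sum to $N$ and each even multiplicity contributes an even amount, the number of odd-multiplicity roots has the same parity as $N$. Consequently
\[
\sign f(+\infty) = (-1)^N \sign f(-\infty),
\]
independently of how the multiplicities are distributed among the roots.

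For part (b), $N = n+1$; here $c = 0$ is impossible, because then $f \in \WW_n$ would have at most $n$ real roots with multiplicity. The asymptotic identity then forces $\sign c = (-1)^{n+1}(-1)^n \sign a = -\sign a$, so $\sign(ac) < 0$. For part (a), $N = n$; if $c = 0$ then $\sign(ac) = 0 \ge 0$ trivially, while if $c \ne 0$ the same identity gives $\sign c = \sign a$ and hence $\sign(ac) > 0$. In every subcase $\sign(ac) \ge 0$. The only delicate point is the multiplicity handling in the sign-change count, but this dissolves via the elementary parity observation above; I do not anticipate any further obstacle.
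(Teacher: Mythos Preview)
Your proof is correct and follows essentially the same approach as the paper: read off the asymptotic signs of $f$ at $\pm\infty$ from the leading coefficient $a$ and the constant term $c$, and compare with the parity of the total zero count. Your treatment of multiplicities (observing that the number of odd-multiplicity roots has the same parity as the total count $N$) is in fact more explicit than the paper's somewhat terse phrasing, but the argument is the same in substance.
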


\begin{remark} The above Lemma means in particular that if $f\in \N$ is a nearly oscillating hybrid polynomial such that $c\ne 0$ and the degree of its polynomial part is $n$, (i.e., $a\ne 0$), then $\sign a=\sign c$. 
\end{remark}

\begin{proof}
Since $f\in \YY_n\setminus \YY_{n-1}$, we get that $a\ne 0$. In the case $c=0$, we obtain $f\in \WW_n$, i.e., $f$ has at most $n$ roots and $\sign(ac)=0$ holds trivially.

Now assume, that $c\ne 0$, and hence $\sign (ac)\ne 0$. Let us consider the total multiplicity of sign changes of $f$. Let $t \to -\infty$, then the leading term of the polynomial part will be dominating, i.e., $f\sim ae^{-t}t^n$. We will have $(\sign f)(-T) = (\sign a)(-1)^n$, when $-T$ is ``close to $-\infty$''. On the other end, $(\sign f)(T) = (\sign f)(\infty) = \sign c$ for large $T$. Therefore, if $(\sign a)(-1)^n=\sign c$, then $f$ has an even number of roots; similarly, if $(\sign a)(-1)^{n+1}=\sign c$, then $f$ has an odd number of roots. Consequently, if $n$ and the number of zeros of $f$ have the same parity, then  $\sign a=\sign c$. In particular, if $f$ has $n$ roots, then $\sign (ac)>0$. Otherwise, if the parities differ--for example, if $f$ has $n+1$ roots-- we obtain $\sign(ac)<0$.

\end{proof}

With these observations, let us see how the derivative zeros can be situated.

\begin{lemma}\label{l:derivative_zero}
We have the following.
\begin{itemize}
\item[1)]
If $f\in \YY_n$ is an oscillating hybrid polynomial, then there are exactly $n$ simple zeros of $f'$ strictly situated between the zeros of $f$.

\item[2)]
If $f\in \YY_n$ is nearly oscillating such that $a(f)=0$, that is, $f\in \YY_{n-1}$, then there are exactly $n-1$ simple zeros of $f'$ strictly situated between the zeros of $f$. 

\item[3)]
If $f\in \YY_n$ is nearly oscillating such that $c(f)=0$, that is, $f\in \WW_{n}$, then there are exactly $n-1$ simple zeros of $f'$ strictly situated between the zeros of $f$, and there must be one more zero of $f'$ exceeding the largest root of $f$.

\item[4)]
If $f\in \YY_n\setminus (\YY_{n-1}\cup \WW_n)$ is a nearly oscillating hybrid polynomial, with precisely $n$ simple zeros, then there are $n-1$ simple zeros of $f'$ strictly situated between them, and there must be one more zero of $f'$ exceeding the largest root of $f$.
\end{itemize}
\end{lemma}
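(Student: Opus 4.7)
The plan is to address the four cases uniformly by combining Rolle's theorem (which locates zeroes of $f'$ between consecutive zeroes of $f$) with the rank bound on zeroes of $f'$ coming from the Extended Chebyshev--Haar structure. Writing $f=e^{-t}p(t)+c$, a direct computation gives $f'(t)=e^{-t}(p'(t)-p(t))$, so $f'\in\WW_m$ where $m=\deg p$; this immediately caps the number of real zeroes of $f'$ by $m$. The only genuinely new step, needed in cases 3) and 4), is to locate an additional zero of $f'$ beyond the largest zero $t_n$ of $f$.

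Cases 1) and 2) will be immediate. In case 1), $f'\in\WW_n$ has at most $n$ real zeroes, and Rolle's theorem applied between the $n+1$ consecutive zeroes of the oscillating $f$ already produces $n$ of them, which are therefore simple and exhaust all zeroes of $f'$. In case 2), $a(f)=0$ forces $f'\in\WW_{n-1}$ with at most $n-1$ zeroes, and Rolle between the $n$ zeroes of $f$ exhibits exactly that many.

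For case 3), $f\in\WW_n$ satisfies $\lim_{t\to\infty}f(t)=0=f(t_n)$ while $f\neq 0$ on $(t_n,\infty)$, so $f$ admits a local extremum in $(t_n,\infty)$ by a Rolle-type argument applied to the extended interval $[t_n,\infty]$ using the limit at $\infty$ as right-endpoint value. This gives one additional zero of $f'$ there, and combined with the $n-1$ inner zeroes from Rolle it matches the $\WW_n$ rank bound of $n$, hence pins down both the count and the location of all zeroes of $f'$.

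Case 4) is the main obstacle, because now $c\neq 0$ so $f(\infty)=c$ and the Rolle-at-infinity trick no longer directly forces an extremum. Instead, I would proceed by comparing signs of $f'$ at the two ends of the tail interval $(t_n,\infty)$. Since $t_n$ is the last simple zero of $f$ and $f$ approaches $c$ on the right, the sign of $f$ just past $t_n$ equals $\sign c$, which forces $\sign f'(t_n)=\sign c$. On the other hand, the leading-term behaviour $f'(t)\sim -a\,e^{-t}\,t^n$ as $t\to+\infty$ shows that $\sign f'(t)=-\sign a$ for all sufficiently large $t$. Now Lemma \ref{l:coeff_a_c}(a), applied to $f\in\YY_n\setminus\YY_{n-1}$ with $n$ real zeroes and $c\neq 0$, gives $\sign(ac)>0$, i.e. $\sign a=\sign c$. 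Therefore the signs of $f'$ at $t_n^+$ and at large $t$ are opposite, so by continuity $f'$ has at least one zero in $(t_n,\infty)$. The $\WW_n$ rank bound of $n$ then forces this extra zero to be unique and simple, and together with the $n-1$ Rolle zeroes to exhaust all zeroes of $f'$.
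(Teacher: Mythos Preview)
Your proof is correct and follows essentially the same approach as the paper. The only minor difference is in case 3): the paper simply cites the result as a standard fact from Milev--Naidenov (Property (P) with $\delta_0=0$, $\delta_n=1$), whereas you supply a self-contained Rolle-at-infinity argument via $f(t_n)=0=\lim_{t\to\infty}f(t)$; and in case 4) the paper obtains the sign of $f'$ just past the last root via $\lim_{t\to u_{n-1}^+} f'/f=+\infty$ rather than by evaluating $f'(t_n)$ directly, but this is cosmetically equivalent since simplicity of $t_n$ guarantees $f'(t_n)\neq 0$.
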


\begin{proof}
The first and second assertions are easy to obtain.
For the oscillating hybrid polynomial, it is obvious that there are exactly $n$ simple zeros of $f'$ strictly situated between the zeros of $f$. Since $f'\in \WW_n$, then $f'$ has at most $n$ zeros, so we found each of them. Similarly, if $f\in \YY_n$, such that $a(f)=0$, then $f'\in \WW_{n-1}$. There are $n-1$ simple roots of $f'$ between the zeros of $f$, which guarantees that $f'$ cannot have any more zeros.

The third case is well known and can be found, for example, in the paper \cite{MN} by Milev and Naidenov.  They used the terminology, that the oscillating weighted polynomials\footnote{Note that with respect to $\WW_n$, our $f$ with $n$ zeros counts to be an oscillating weighted polynomial for $\WW_n$, but only a nearly oscillating polynomial in $\YY_n$, given that in the latter the dimension is one higher and the maximal possible number of the zeros is $n+1$.} of $\WW_n$ satisfy Property (P), with $\delta_0=0$ and $\delta_n=1$.

Concerning the last case, i.e., if $f$ is only a nearly oscillating hybrid polynomial with precisely $n$ simple real zeros $u_0<u_1< \ldots< u_{n-1}$, such that $ac\ne 0$, then there are $n-1$ simple zeros of $f'$ strictly between them. As for the sign of $f$ on $(u_{n-1},\infty)$, we know that $f\ne 0$ on this interval, hence it does not change sign, and $\lim_\infty f = c$, so that $\sign f = \sign c$ constant on $(u_{n-1},\infty)$. We have that $\lim_{t\to u_{n-1}+0} f'/f = +\infty$, hence $\sign f' =  \sign f = \sign c$ on the interval $(u_{n-1},u_{n-1}+\varepsilon)$ with a small enough $\varepsilon>0$, whereas $f' \sim -a t^n e^{-t}$ towards $+\infty$, so that $\sign f'(T) = -\sign a$ for very large $T$. By Lemma \ref{l:coeff_a_c}, we get that for a nearly oscillating hybrid polynomial $f \in \YY_n$ (with exactly $n$ simple real zeros) we always have $\sign (ac)\ge 0$, hence  $ac\ne 0$ guarantees that $\sign (ac)>0$, so that $a$ and $c$ are of the same sign. It means that in $(u_{n-1},\infty)$ $f'$ has different signs at the ends, and so it must have, in total, an odd number of sign changes, which under the circumstances is to be exactly one sign change at a simple zero of $f'$.
\end{proof}

\begin{theorem}[Markov-type Inheritance Theorem for Hybrid Systems]\label{th:hybrid-Markov}
Let $f, g\in \YY_n$ be oscillating, or nearly oscillating hybrid polynomials such that $c(f)=c(g)=1$ and if both are nearly oscillating, then let $a(f)\ge a(g)\ge 0$.

If the zeros $u_i$ of $f$ and $v_i$ of $g$ are interlacing in the ordering $u_0<v_0<u_1<v_1<\dots$, then the zeros $\xi_i$ and $\eta_i$ of their derivatives $f'$ and $g'$ interlace in the same manner, i.e., $\xi_1<\eta_1<\xi_2<\dots$.

The number of the zeros of $f$ and $g$ and of their derivatives depend on the type of functions $f$ and $g$. More precisely,
\begin{itemize}
\item[1)]
If $f$ and $g$ are oscillating hybrid polynomials with interlacing zeros $u_0<v_0<u_1<\dots<u_n<v_n$, then the zeros $\xi_i$ and $\eta_j$ of $f'$ and $g'$ are interlacing in the order: $\xi_1<\eta_1<\xi_2<\dots<\xi_n<\eta_n$.
\item[2)]
If $f$ is an oscillating and $g$ is a nearly oscillating hybrid polynomial such that $a(g)=0$, with interlacing zeros  $u_0<v_0<u_1<\dots<v_{n-1}<u_n$, then $f'$ has $n$ and $g'$ has $n-1$ zeros and these zeros are interlacing in the order: $\xi_1<\eta_1<\xi_2<\dots<\eta_{n-1}<\xi_n$.

\item[3)]
If $f$ is an oscillating and $g$ is a nearly oscillating hybrid polynomial such that $a(g)>0$, with interlacing zeros  $u_0<v_0<u_1<\dots<v_{n-1}<u_n$, then both derivative functions $f'$ and $g'$ have $n$ zeros and these zeros are interlacing in the order: $\xi_1<\eta_1<\xi_2<\dots<\xi_n<\eta_n$.

\item[4)]
If both $f, g \in \N$ such that $(a(f)\ge) a(g)=0$ and $a(f)>0$ with interlacing zeros $u_0<v_0<u_1<\dots<u_{n-1}<v_{n-1}$, then $f'$ has $n$ and $g'$ has $n-1$ zeros and these zeros are interlacing in the following order: $\xi_1<\eta_1<\xi_2<\dots<\eta_{n-1}<\xi_n$.

\item[5)]
If both $f, g \in \N$ such that $(a(f)\ge) a(g)>0$ with interlacing zeros $u_0<v_0<u_1<\dots<u_{n-1}<v_{n-1}$, then both derivative functions $f'$ and $g'$ have $n$ zeros and these zeros are interlacing in the following order: $\xi_1<\eta_1<\xi_2<\dots<\xi_n<\eta_n$.

\end{itemize}
\end{theorem}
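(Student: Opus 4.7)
The plan is to prove the theorem by a one-parameter homotopy argument. Set $h_\lambda(t) := (1-\lambda)f(t) + \lambda g(t)$ for $\lambda \in [0,1]$; since $\YY_n$ is a vector space and $c(f)=c(g)=1$, we have $h_\lambda \in \YY_n$ with $c(h_\lambda)=1$ identically and $a(h_\lambda) = (1-\lambda)a(f) + \lambda a(g)$. The endpoints $h_0 = f$ and $h_1 = g$ recover the given polynomials, and at each zero $v_j$ of $g$, $h_\lambda(v_j) = (1-\lambda)f(v_j)$ inherits the sign of $f(v_j)$ for $\lambda < 1$, while at each zero $u_i$ of $f$, $h_\lambda(u_i) = \lambda g(u_i)$ inherits the sign of $g(u_i)$ for $\lambda > 0$.

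First, I would show that $h_\lambda \in \OO \cup \N$ for every $\lambda \in [0,1]$ and that its zeros lie strictly between (and in the same order as) the corresponding zeros of $f$ and $g$. The alternating signs of $f$ at the zeros of $g$ (and conversely), combined with the interlacing $u_0 < v_0 < u_1 < v_1 < \cdots$, produce the required number of sign changes of $h_\lambda$. Since $\YY_n$ is an ECHS of rank $n+1$, $h_\lambda$ admits at most $n+1$ real zeros, so all are simple and accounted for. The exact type of $h_\lambda$ is then read off from $a(h_\lambda)$ via Lemma \ref{l:coeff_a_c} together with the asymptotic behavior $\lim_{t \to +\infty} h_\lambda = 1$ and the sign of $h_\lambda$ near $-\infty$ (determined by $\sign a(h_\lambda)$). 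The zeros $z_k(\lambda)$ of $h_\lambda$ depend continuously on $\lambda$ via the implicit function theorem, except at parameter values where $a(h_\lambda) = 0$, at which a single zero may cross in from or escape to $-\infty$ in a controlled manner.

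In the second step, I would apply Lemma \ref{l:derivative_zero} to each $h_\lambda$ to obtain the zeros $\zeta_k(\lambda)$ of $h'_\lambda$: a Rolle zero between consecutive zeros of $h_\lambda$, and possibly one extra zero exceeding the last root of $h_\lambda$ as in cases 3)--4) of that lemma. Each Rolle-type $\zeta_k(\lambda)$ is squeezed between two zeros of $h_\lambda$, which themselves move continuously from the corresponding zeros of $f$ to those of $g$; by continuity in $\lambda$, setting $\xi_k = \zeta_k(0)$ and $\eta_k = \zeta_k(1)$ yields the claimed interlacing in each of the five subcases.

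The main obstacle will be controlling the direction of motion of each $\zeta_k(\lambda)$, in particular in case 3) of the theorem, where the leading coefficients of $f$ and $g$ have opposite signs ($a(f) < 0$ by Lemma \ref{l:coeff_a_c} applied to the oscillating $f$, and $a(g) > 0$), so $a(h_\lambda)$ vanishes at an intermediate $\lambda_0$ and the type of $h_\lambda$ degenerates there. Tracking the extra zero beyond the last root of $h_\lambda$ (cases 3)--4) of Lemma \ref{l:derivative_zero}) through this transition, and confirming that its limit as $\lambda \to 0$ or $\lambda \to 1$ matches the prescribed position of $\xi_n$ or $\eta_n$, requires a delicate asymptotic analysis. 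Differentiating $h'_\lambda(\zeta_k(\lambda)) = 0$ yields $\zeta'_k(\lambda) = -(g'-f')(\zeta_k(\lambda))/h''_\lambda(\zeta_k(\lambda))$; the sign of the denominator is fixed by whether $h_\lambda$ has a local maximum or minimum at $\zeta_k(\lambda)$, while the sign of $(g'-f')$ there must be deduced from the interlacing established in Step 1.
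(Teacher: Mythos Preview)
Your homotopy approach is genuinely different from the paper's, and it has a real gap at Step~2.

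The paper follows Bojanov's Wronskian method. It first proves (Lemma~\ref{l:signpreserving}) that $\phi(t):=f'(t)g(t)-f(t)g'(t)$ never vanishes on~$\RR$: for each fixed $z$ one sets $\psi(t):=f(t)g(z)-f(z)g(t)\in\YY_n$, observes $\psi(z)=0$, and counts the sign changes of $\psi$ at the $u_i$ to locate the maximal number of simple zeros allowed by the ECHS property; hence $\psi'(z)=\phi(z)\ne 0$. Once $\phi$ has constant sign, evaluating at the zeros $\xi_i$ of $f'$ gives $\phi(\xi_i)=-f(\xi_i)g'(\xi_i)$ of constant sign; since the $f(\xi_i)$ alternate (one $u_i$ in each $(\xi_i,\xi_{i+1})$), so do the $g'(\xi_i)$, and a zero $\eta_i$ of $g'$ falls in each $(\xi_i,\xi_{i+1})$. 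The remaining zero of $g'$ (in Cases 1, 3, 5) is then placed beyond $\xi_n$ using Lemma~\ref{l:derivative_zero}.

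Your Step~1 can be made to work: for $0<\lambda<1$ one checks that $f$ and $g$ have opposite signs on each $(u_k,v_k)$ and the same sign on each $(v_k,u_{k+1})$, so the zeros $z_k(\lambda)$ of $h_\lambda$ are trapped in $(u_k,v_k)$ and even move monotonically there. The trouble is Step~2. Knowing only $\zeta_k(\lambda)\in\bigl(z_{k-1}(\lambda),z_k(\lambda)\bigr)\subset(u_{k-1},v_k)$ does not determine the order of $\xi_k=\zeta_k(0)\in(u_{k-1},u_k)$ and $\eta_k=\zeta_k(1)\in(v_{k-1},v_k)$, since these two intervals overlap. Your remedy is the formula $\zeta_k'(\lambda)=-(g'-f')(\zeta_k(\lambda))/h''_\lambda(\zeta_k(\lambda))$, but this is circular: to fix the sign of the numerator you must locate the zeros of $(g-f)'$, and all you know is that $g-f\in\WW_n$ has its zeros in the gaps $(v_i,u_{i+1})$, so that the zeros of $(g-f)'$ lie somewhere between those (plus one beyond the last)~--- information too coarse to determine $\sign(g'-f')$ at the moving point $\zeta_k(\lambda)$. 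In general $\zeta_k(\lambda)$ need not be monotone, and nothing in Step~1 supplies the missing sign. The paper explicitly notes that Bojanov's method is chosen precisely because such ``classical'' deformation arguments break down once the auxiliary functions can no longer be represented by interpolation, which is the situation in $\YY_n$.
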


\begin{remark}\label{rem:hybrid-Markov}
We note that in our investigation, it is enough to clarify the above-mentioned cases. Applying Lemma \ref{l:interlacing} and \eqref{ai_order}, in the following we collect the different types of pairs $P_i, P_j$, that correspond to the cases listed in Theorem \ref{th:hybrid-Markov} and will appear in the proof of our results. Let 
\begin{itemize}
\item[1)] $i<j< r$ (and if $a(P_r)\ne 0$, then $j=r$ is also allowed). Take $f:=(-1)^{n+1-j} P_j,$ $g:=(-1)^{n+1-i}P_i$. Then, $f,g\in \YY_{n}\setminus \YY_{n-1}$ are oscillating hybrid  polynomials falling into Case 1) above.
\item[2)] $i<j=r$, such that $a(P_r)=0$. Put $f:=(-1)^{n+1-i}P_i,$ $g:=(-1)^{n+1-j}P_j$. Then, $f\in \YY_{n}\setminus \YY_{n-1}$ is an oscillating hybrid polynomial and $g\in \YY_{n-1}\cap \N$ is a nearly oscillating hybrid polynomial (with $a(g)=0$), and they fall into Case 2) above.
\item[3)] $i<r<j$ (and if $a(P_r)\ne 0$, then $i=r$ is also allowed).  Take  $f:=(-1)^{n+1-i}P_i,$ $g:=(-1)^{n+1-j}P_j$. Then, $f\in \OO$ is an oscillating hybrid polynomial and $g\in (\YY_n\setminus \YY_{n-1}) \cap \N$ is a nearly oscillating hybrid polynomial (with  $a(g)\ne 0$). This falls into Case 3).
\item[4)] $i=r<j$ such that $a(P_r)=0$. Put $f:=(-1)^{n+1-j}P_j,$ $g:=(-1)^{n+1-i}P_i$. Then, $f\in (\YY_{n}\setminus \YY_{n-1})\cap \N$ and $g\in \YY_{n-1}\cap \N$ are nearly oscillating hybrid polynomials with $a(f)>a(g)=0$, and their pair belonging to Case 4) above.
\item[5)] $r<i<j$. We can take $f:=(-1)^{n+1-j}P_j,$ $g:=(-1)^{n+1-i}P_i)$. Then, $f,g \in (\YY_{n}\setminus \YY_{n-1})\cap \N$ are nearly oscillating hybrid polynomials, with $a(f)>a(g)>0$ and the case belongs to Case 5) above.
\end{itemize}

In each case the smallest zero $u_0$ of $f$ precedes the zeros of $g$, and their zeros are interlacing.
\end{remark}

\begin{remark}
As we have seen, the important pairs of nearly oscillating hybrid polynomials (that play a crucial role in our investigation) are studied in Theorem \ref{th:hybrid-Markov}. 

Originally, in \cite{Markov}, the Markov property was proved for (unweighted) oscillating polynomials by Markov.
It is well-known that for $f,g \in \WW_n$, i.e., when $c(f)=c(g)=0$, the Markov-type property also holds, see \cite{MN}.

However, there are pairs of nearly oscillating hybrid polynomials in $\YY_n$ for which the Markov property does not hold. 

\begin{example}
Let $n=2$ and $f:=1-10e^{-t}(t-1)$ and $g:=1+50e^{-t}(t-2)(t-4)$. Then, $f\in  \YY_{n-1} \cap \N$ and $g\in (\YY_n\setminus \YY_{n-1}) \cap \N$ and the zeros $u_i$ and $v_i$ of $f$ and $g$ are  interlacing, i.e., $u_0<v_0<u_1<v_1$, but for the zeros of derivatives we find that $\xi_1<\eta_1<\eta_2$. See the picture below:

\begin{figure}[ht]
\includegraphics[width=12cm]{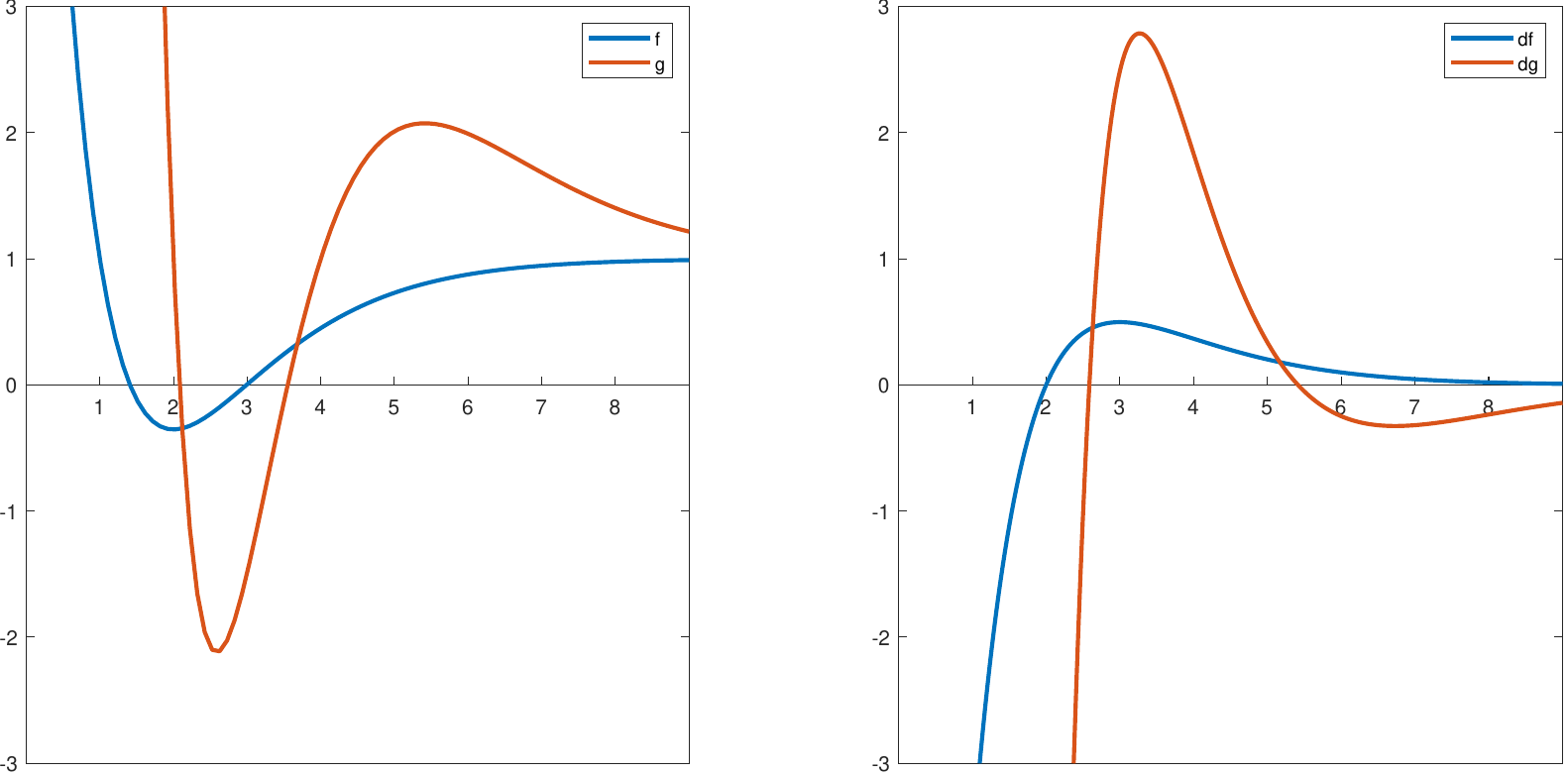}
\caption{}
\end{figure}
\end{example}

\end{remark}

In the following, we will present a proof of the Inheritance Theorem \ref{th:hybrid-Markov}, adapting an insightful method of Bojanov from \cite{Bojanov2002}. The key achievement of this method is that, unlike classical approaches for polynomials, where interpolation representations play a key role, it also works for systems where the auxiliary functions cannot be represented by interpolation. Also note that we can prove the inheritance for pairs of oscillating hybrid polynomials of our ECH system, or when at least one of them is an oscillating hybrid polynomial, but in case both are only nearly oscillating, there are cases where the statement simply fails to hold, as the above example shows. Therefore, the fact that inheritance holds in all cases needed for the main argument can be considered sheer luck. (Naturally, to clarify matters with exact proofs required some work.)

\begin{lemma}[Bojanov-type sign preservation lemma]\label{l:signpreserving} Under the above conditions --that is, in Cases 1--5) of Theorem \ref{th:hybrid-Markov}-- $\phi(x):=f'(x)g(x)-f(x)g'(x)$ has constant sign $\ve$ on $\RR$.
\end{lemma}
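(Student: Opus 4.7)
The plan is to prove the lemma by contradiction, leveraging that $\YY_n$ is an ECHS of rank $n+1$, so no nonzero element of $\YY_n$ admits more than $n+1$ real zeros counted with multiplicity.

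First I would verify that the values $\phi(u_i) = f'(u_i) g(u_i)$ and $\phi(v_j) = -f(v_j) g'(v_j)$ are all nonzero and share one common sign $\varepsilon \in \{\pm 1\}$. Nonvanishing follows from simplicity of the zeros and from the interlacing, which forces $g$ to have no zero at the $u_i$ and vice versa. The common sign is obtained from a direct bookkeeping: each of $f, g$ alternates sign across its own zeros, starting from $\lim_{+\infty} f = \lim_{+\infty} g = 1 > 0$ and with the signs at $-\infty$ dictated by the parity of $n$ together with $\sign a(f)$ and $\sign a(g)$ via Lemma \ref{l:coeff_a_c}. This verification has to be carried out in each of the five configurations of Remark \ref{rem:hybrid-Markov}, but the computations are entirely parallel.

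Next, I would assume for contradiction that $\phi(c) = 0$ at some $c \in \RR$. The previous step excludes $c$ from being a zero of $fg$, so $\lambda^{*} := f(c)/g(c)$ is a well-defined nonzero real number and $F := f - \lambda^{*} g \in \YY_n$ satisfies $F(c) = 0$ together with
\[
    F'(c) = f'(c) - \lambda^{*} g'(c) = \phi(c)/g(c) = 0;
\]
thus $c$ is a zero of $F$ of multiplicity at least two. Since $f$ and $g$ have distinct zero sets, $F \not\equiv 0$, and the ECHS property forces $F$ to have at most $n+1$ real zeros counted with multiplicity. It therefore suffices to extract from the interlacing data at least $n$ further (simple) real zeros of $F$ away from $c$, producing a total of $\ge n+2$ and the desired contradiction. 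Evaluating at the interlacing nodes gives $F(u_i) = -\lambda^{*} g(u_i)$ and $F(v_j) = f(v_j)$, whose signs are those fixed in the first step, modulated only by $\sign \lambda^{*}$. A case-by-case counting of sign changes along the interlacing sequence produces at least $n$ real zeros of $F$ in the bounded region in Cases 1--3, where $2n+1$ or $2n+2$ interlacing nodes are available. In Cases 4--5 only $2n$ interlacing nodes exist and the interior of the sequence delivers only $n-1$ sign changes for one sign of $\lambda^{*}$; the missing zero is then obtained at an unbounded end by comparing $\sign F$ at the outermost interlacing node with $\sign \lim_{t \to \pm\infty} F(t)$, the latter being determined by $c(F) = 1 - \lambda^{*}$ at $+\infty$ and by $(-1)^n (a(f) - \lambda^{*} a(g))$ at $-\infty$. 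The hypothesis $a(f) \ge a(g) \ge 0$ in Cases 4--5 then guarantees that at least one of the two ends forces the required sign change. In the degenerate situations $a(F) = 0$ or $c(F) = 0$, $F$ drops into the smaller space $\YY_{n-1}$ or $\WW_n$ with a zero-count bound of $n$, and the contradiction already arises from the $n+1$ zeros collected.

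The chief obstacle is the sign bookkeeping in the last step, together with verifying that the double zero at $c$ is genuinely distinct from the simple zeros furnished by the sign changes. If $c$ lies in a sub-interval where $F$ already exhibits a sign change, then since a double zero does not itself account for a sign change, an additional simple zero must still be present in that sub-interval, and the counting survives intact. The careful tracking of the boundary behaviour at $\pm\infty$ via the leading coefficient $a(F)$ and the constant term $c(F)$, which is the point where the hypothesis $a(f) \ge a(g) \ge 0$ enters, constitutes the most delicate technical step.
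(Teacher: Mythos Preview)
Your approach is essentially the paper's: both form the auxiliary element $F = f - \lambda^{*} g$ at a putative zero $c$ of $\phi$ (the paper writes the unnormalized $\psi(x) = f(x)g(c) - f(c)g(x) = g(c)\,F(x)$), observe that $c$ is then at least a double zero, and reach a contradiction by zero-counting in the ECHS $\YY_n$. Two minor organizational differences are worth noting. First, the paper dispenses with your preliminary common-sign verification at the nodes; only the nonvanishing of $\phi$ at the $u_i$ and $v_j$ is actually used (to exclude $c$ from being a node), and that is immediate from simplicity and interlacing. Second, the paper counts sign changes of $\psi$ solely along the zeros $u_i$ of $f$, rather than along the full interlaced sequence $u_0,v_0,u_1,\ldots$; this keeps the sign pattern independent of $\sign\lambda^{*}$ and shortens the bookkeeping you flag as delicate. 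In Cases 4)--5) the paper packages your $\pm\infty$ endgame as a trichotomy on $A\cdot C := a(\psi)\,c(\psi)$ via Lemma~\ref{l:coeff_a_c}: when $AC<0$ it derives $(a-b)\bigl(g(c)^2-f(c)g(c)\bigr)+b\bigl(g(c)-f(c)\bigr)^2<0$, forcing $f(c)g(c)>0$ and $|f(c)|>|g(c)|$, which yields the extra sign change on $(u_{n-1},\infty)$. The hypothesis $a(f)\ge a(g)\ge 0$ enters at exactly this point in both arguments.
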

\begin{proof}

Fix any value $z\in \RR$ and consider the function $\psi(x):=f(x)g(z)-f(z)g(x) \in \YY_n$. Obviously, $\psi(z)=0$.

We claim that $\psi'(z)\ne 0$. If this is proved, then the Lemma follows immediately, since differentiating $\psi$ and evaluating at $z$ gives $\psi'(z)=f'(z)g(z)-f(z)g'(z)=\phi(z) \ne 0$ for any $z \in \RR$.

To prove the claim $\psi'(z)\ne 0$, first consider the case where $z=u_i$ or $z=v_j$. Then $\psi'(z)=f'(z)g(z)$ or $-f(z)g'(z)$. If this expression vanished, it would imply either that $f$ or $g$ has a double zero, contradicting our assumptions, or that $z$ is a common zero of $f$ and $g$, which is also excluded.

Observe that $g$ takes alternating signs in the intervals $(v_{i-1},v_i)$, since its zeros are simple by assumption. This also holds for $i=0$ if we set $v_{-1}=-\infty$. It follows that the points $u_i$, lying in consecutive intervals between the $v_i$, satisfy that $g(u_i)$ have alternating signs. As $f(z) \ne 0$, that means $n+1$ (or only $n$, if $f \in \N$) strict alternation of signs for $\psi(u_i)=f(z)g(u_i)$, too. These generate $n$ (or $n-1$) distinct zeros of $\psi$, say $\tau_i \in (u_{i-1},u_i)$, ($i=1,\ldots,n$ or $i=1,\ldots,n-1$).

Now, we need to separate various cases in the proof. These cases correspond to those listed in Theorem \ref{th:hybrid-Markov}.

First, consider Cases 1), 2), and 3), where $f$ is an oscillating hybrid polynomial and $g$ is either an oscillating or a nearly oscillating hybrid polynomial. Then, $f$ has $n+1$, and hence $\psi$ has $n$ distinct zeros.

We consider first the case when $C:=c(\psi)=0$. In this case, $\psi \in \WW_n$ as well. Therefore, the already found zeros $\{\tau_i~:~i=1,\ldots,n\}$ amount to the maximal number possible in $\WW_n$, so all must be simple, and there can be no additional zeros. Since $\psi(z)=0$, we must have $z=\tau_i$ for some $i$, and by simplicity of the zeros, $\psi'(\tau_i)\ne 0$. Hence $\psi'(z)\ne 0$, as required.

Second, assume now that $C \ne 0$. As before, we already know of $n$ zeros
$\tau_i$, ($i=1,\ldots,n$) with $\tau_i\in (u_{i-1},u_i)$. If $\psi'(z)$ was $0$, then $z$ is a zero of multiplicity at least two, so $z \not \in \{\tau_i~:~i=1,\ldots,n\}$ would mean an additional 2 zero multiplicity, already too many ($n+2$, whereas in $\YY_n$ only $n+1$ is possible). So, we must have $z \in \{\tau_i~:~i=1,\ldots,n\}$, i.e., $z=\tau_i$ with some $i$. But $\tau_i \in (u_{i-1},u_i)$, and $v$ takes opposite signs at the endpoints\footnote{The intervals $(u_{i-1},u_i)$ start with $(u_0,u_1)$ and end with $(u_{n-1},u_n)$, so here we don't need to extend considerations to infinite points: the endpoints, where we claimed alternation of signs, are finite nodes.} $u_{i-1}$ and $u_i$, so that the multiplicity of all the zeros in the interval, reaching at least two, must then be at least three. That is again too many: $\ge 3 +(n-1)=n+2$.

Summing up, whether we assumed $C=0$ or not, the claim that $\psi'(z) \ne 0$ is proven.

\vspace{2mm}
Now, we consider Cases 4) and 5), when both $f, g$ are nearly oscillating hybrid polynomials, with zeros $u_0<v_0<u_1<v_1<\ldots<u_{n-1}<v_{n-1}$.

The first part, involving the definition $\psi(x) := f(x) g(z) - f(z) g(x)$ and the verification that $\psi'(z) \ne 0$ implies the claim, as well as that this holds for $z = u_i$ or $z = v_j$, proceeds mutatis mutandis.

So, as before, we may assume that $z$ is a zero of neither function. Let $a:=a(f)$ and $b:=a(g)$. Then, by assumption we know that $a-b\ge 0$ and $A:=a(\psi)$ can be obtained as $A=ag(z)-bf(z)$ and exploiting that we normalized to have $c(f)=c(g)=1$, the constant term of $\psi$ becomes $C:=c(\psi)=c(f) g(z)- c(g) f(z)=g(z)-f(z)$. Observing the alternation of $\psi$ we obtain, similarly as before, now $n$ strict alternation of signs and hence $n-1$ distinct zeros of $\psi$ in $(u_0,u_{n-1})$. Consequently, if $\psi$ has only $n-1$ zeros, then each zero of $\psi$ is simple, and there is no double zero $z$ of $\psi$.

Next, let us discuss the case when $\psi$ has $n$ zeros (also counted with multiplicity). Recall that according to Lemma \ref{l:coeff_a_c}, this case occurs exactly for $AC \ge 0$. If this is the case, then in the interval $(u_0,u_{n-1})$ there can be no more zeros of $\psi$, for the strict alternation of signs between the $u_i$ would then force us to have at least $n-2+3=n+1$ zeros, which exceeds $n$, maximally possible. Now, if $z\in \RR\setminus [u_0,u_{n-1}]$, then $\psi$ has $n-1$ zeros in $(u_0,u_{n-1})$, and only $n$ altogether, so again $z$ must be simple and $\psi'(z)\ne 0$. 

Assume now that $\psi$ has $n+1$ zeros. Then, we know that $AC<0$, i.e.,
\begin{align*}
AC=ag^2(z)+bf^2(z)-(a+b)f(z)g(z)=(a-b)(g^2(z)-f(z)g(z))+b(g(z)-f(z))^2<0.
\end{align*}
Since $a\ge b\ge 0$, we must also have $f(z)g(z)>g^2(z)$, or equivalently $f(z)g(z)>0$ and $|f(z)|>|g(z)|$. It implies that besides the roots $\tau_i\in (u_{i-1},u_i)$, there exists at least one further root in $(u_{n-1},\infty)$. Indeed,
\[
(\sign \psi)(\infty)=\sign C=\sign(g(z)-f(z))=\sign (-f(z)),
\]
for $|f(z)|>|g(z)|$ forces the sign of $-f(z)$ ruling here, whereas $u_{n-1}<v_{n-1}$ entails that $\sign g(u_{n-1})=-\sign g(\infty)$, leading to
\[
(\sign \psi)(u_{n-1})=\sign(-f(z)g(u_{n-1}))=\sign(f(z)g(\infty))=\sign f(z),
\]
in view of $g(\infty)=c(g)=1$ by assumption.

Moreover, the $n$ strict alternation of signs between the $u_i$, now extended with $u_n:=\infty$, that in each of the intervals $(u_{i-1},u_i), ~ i=1,\ldots,n$, the number of zeros (counted with their multiplicities) can not be greater than one. Indeed, otherwise, the strict alternation of signs of $\psi$ at the endpoints of $(u_{i-1},u_i)$ forces a zero with multiplicity at least 3, which is impossible ($n-1+3=n+2>n+1$). Consequently, $\psi$ has only simple zeros.

Altogether, we arrived at having only simple zeros in all cases of $\psi$ having $n-1$, $n$ or $n+1$ zeros, hence the proof of the Lemma in Cases 4) and 5) is finished, too.
\end{proof}

\begin{proof}[Proof of Theorem \ref{th:hybrid-Markov}] By the above Lemma, $\phi(t):=f'(t)g(t)-f(t)g'(t)$ has constant sign $\ve$ on the whole real line $\RR$.

We need to divide the proof into several cases, similarly to the proof of Lemma \ref{l:signpreserving}. 

Firstly, we consider Cases 1) -- 3). Let $\xi_i \in (u_{i-1},u_i)$ be the roots of $f'$--there are $n$ such intervals, hence roots of $f'$. Since $f' \in \WW_n$, there can be no more, and all of them must be simple.  Furthermore, $f'$ changes its sign alternatingly on consecutive intervals $(\xi_{i-1},\xi_i)$, including the end intervals from $\xi_{0}:=-\infty$ and to $\xi_{n+1}=+\infty$. In particular, Lemma~\ref{l:signpreserving} implies that 
\begin{equation}\label{vxii} 
\phi(\xi_i)=-f(\xi_i)g'(\xi_i) 
\end{equation} 
has the same sign $\varepsilon$ for all $i$. Note that there is precisely one $u_i$ in $(\xi_{i},\xi_{i+1})$, again withstanding the infinite end intervals. So, $f(\xi_i)$ must be of alternating signs. It follows from \eqref{vxii} and Lemma \ref{l:signpreserving} that also $g'(\xi_i)$ are of alternating signs. That leads to finding a zero $\eta_i$ of $g'$ in each intervals $(\xi_i,\xi_{i+1})$ for $i=1,\ldots,n-1$. 

Consequently, when $g\in \YY_{n-1}$ and hence $g'\in \WW_{n-1}$ (that falls into Case 2), we get that
\[
\xi_1<\eta_1<\dots<\xi_{n-1}<\eta_{n-1}<\xi_n,
\]
i.e., the interlacing property for the zeros of $f'$ and $g'$ is proven.

However, in Cases 1) and 3), for $g' \in \WW_n$ there exists still another simple zero $\eta^*$ outside $(\xi_1,\xi_n)$. More precisely, by Lemma \ref{l:derivative_zero} we know, that since $g\in (\YY_n\setminus \YY_{n-1})\cap \N$, the roots $\eta_i$ $(i=1,\dots,n)$ of $g'$ are lying in the intervals $(v_{i-1},v_i)$ for $i=1,\dots, n$, with $v_n:=\infty$. Therefore, the last root $\eta^*$ of $g'$ is strictly greater, than $v_{n-1}>\xi_1$, and hence it is necessarily greater than $\xi_n$, i.e., we get that
\[
\xi_1<\eta_1<\dots<\xi_n<\eta_n.
\]
It means, that the interlacing of roots $\xi_i$ and $\eta_i$, with $\eta_n:=\eta^*$ (in the same order as those of $u_i$ and $v_i$) are proven.

In Cases 4) and 5), we know that $f\in (\YY_{n}\setminus (\YY_{n-1}\cup \WW_n))\cap \N$, i.e., $f$ has $n$ distinct roots ($u_0<\dots <u_{n-1}$). 
Then, the Part 4) of Lemma \ref{l:derivative_zero} guarantees, that $f'$ has simple roots in each interval $(u_{i},u_{i+1})$, $i=0, \dots, n-1$, with $u_{n}:=\infty$. Therefore, we get, that there exists a unique $u_i\in (\xi_{i},\xi_{i+1})$ for $i=0, \dots, n-1$, where $\xi_0:=-\infty$, implying that $f(\xi_i)$'s have alternating signs. Therefore, as in the previous cases, Lemma \ref{l:signpreserving} and \eqref{vxii} imply the alternating signs of $g'(\xi_i)$, which leads to find $n-1$ simple roots of $g'$ in the intervals $(\xi_{i},\xi_{i+1})$, $i=1,\dots, n-1$. It implies for Case 4), i.e., when $g\in \YY_{n-1}\cap \N$, that the interlacing property of the roots of $f'$ and $g'$ (in the same order as those of $u_i$ and $v_i$) is proven, i.e., we get that
\[
\xi_1<\eta_1<\dots<\xi_{n-1}<\eta_{n-1}<\xi_n.
\]

Otherwise, in Case 5), if $g\in \YY_n\setminus \YY_{n-1}$, then (similarly to Cases 1) and 3)), there exists a further zero $\eta^*:=\eta_n$ of $g'$, that is necessarily greater than $\xi_{n}$. Consequently, the interlacing property of the zeros $f'$ and $g'$  holds again with
\[
\xi_1<\eta_1<\dots<\xi_n<\eta_n.
\]

\end{proof}

\subsection{Nonsingularity}
 
In what follows, we apply the same crucial tool that appeared in the original proof of the Bernstein and Erdős Conjectures for the ``classical'' polynomial system. Namely, we study and establish the nonsingularity properties of matrices obtained by the partial derivatives $\frac{\partial m_i}{\partial x_j}(\xx)$ of the interval maxima $m_i$ on $I_i$ of the Lebesgue function for $i=1,\dots,n+1,$ $i\ne k$ and $j=1,\dots,n$.

In order to work out an explicit formula for these partial derivatives, we first apply a result of Kilgore and Cheney (see Lemma 1 in \cite{Kilgore-Cheney}). 
\begin{lemma}[Kilgore-Cheney]\label{l:K-Ch}
If an ECHS contains the constant $\bf{1}$ function, then the Lebesgue function $L(\xx,\cdot)=\sum\limits_{k=0}^{n+1} |h_k(\xx,\cdot)|$ is strictly greater than 1 on the interior of $I_i$ for all occuring $i$.
\end{lemma}

Recall, that $L(\xx,t)=1$, if $t=x_j$ for some $j\in\{0,1,\ldots,n\}$, (where $x_0=0$ and $\xx=(x_1,\ldots,x_n)$).
Note that Kilgore and Cheney investigated the interpolation process on a finite interval, but their proof works for $\II=[0,\infty)$, too.

\begin{lemma}\label{l:zi_interior}
For fixed $\xx \in S$ and $i \in \{1, \ldots, n+1\}$, there exists a
unique interior point $z_i(\xx)$ of $I_i$ where $L(\xx,t)=P_i(\xx,t)$
attains its maximum on $I_i$.
\end{lemma}

\begin{proof}
According to Lemma \ref{l:K-Ch}, the Lebesgue function $L(\xx,t)> 1$ for every $t\in \II$  that is not a node. Therefore, the function $P_i(\xx,t)\ge 1$ on $I_i$ with equality if and only if $t=x_{i-1}, x_i$. Applying Rolle's theorem we get that $(P_i)'_t$ has at least one zero in $(x_{i-1},x_i)$. We show that it is a unique root of $(P_i)'_t$ in $\intt I_i$ for every $i=1,\dots, n+1$.

If $i<r$ (or $i\le r$, if $a_r\ne 0$), then \eqref{Jistardef} implies, that the function $P_i$ has a root in each interval $I_k$, where $k\ne i$, $k=0,\dots, n+1$, implying $n+1$ roots of $P_i$. So, there are $i$ roots up to $x_{i-1}$ and $n+1-(i+1)+1$ roots that are greater, than $x_{i}$. Consequently, $(P_i)'_t$ must have zeros situated strictly between the roots of $P_i$, which altogether means at least $n$ different roots. Since $P_i\in \YY_n$, the derivative  $(P_i)'_t \in \WW_n$, and it has no more zeros. As seen, there is a root of $(P_i)'_t$ in $(x_{i-1},x_i)\subset (y_{i-1}^{(i)},y_{i+1}^{(i)})$ (where $y_{i-1}^{(i)},y_{i+1}^{(i)}$ are the roots of $P_i$ in $I_{i-1}$ and $I_{i+1}$, respectively). Thus, $z_i(\xx)$ is a unique zero of $(P_i)'_t$ in $\intt I_i$.

We note in passing that this zero-counting also means that $(P_1)'_t$ cannot have more than one zero between the zeros of $P_1$ found in $(-\infty, x_0)$ on the one hand and in $I_2$ on the other hand. That is, the zero in $I_1$ of $(P_1)'_t$ is the only zero in $J_1=(-\infty,y_2^{(1)})$, and hence its first zero, too.

If $i=r$ and $a_r=0$, then $P_r$ has $n$ roots, one in each interval $I_k=[x_{k-1},x_k]$, where $k\ne r$, $k=1,\dots, n+1$ 
and $(P_r)'_t$ has $n-1$ roots between the roots of the ones of $P_r$. Therefore, similarly to the previous case, there is a unique root of $(P_r)'_t$ in $(x_{r-1},x_r)$.

If $r<i<n+1$, then $P_i$ is a nearly oscillating hybrid polynomial, that has $n$ roots, one in each interval $I_k=[x_{k-1},x_k]$, where $k\ne i$, $k=1,\dots, n+1$. Concerning the derivative function $(P_i)'_t$, we know that it has also $n$ roots, such that $n-1$ roots are between the roots of the function $P_i$ and there is an additional zero beyond the largest root of $P_i$. It implies again, that $(P_i)'_t$ has a unique root $z_i(\xx)$ in $\intt I_i$, as desired.

Finally, a similar argument applies to $P_{n+1}$. There is one root  in each interval $I_k=(x_{k-1},x_k)$, where $k=1,\dots, n$. Consequently, its derivative $(P_{n+1})'_t$ has $n-1$ roots before $x_n$, and since $P_{n+1}(\xx,x_n)=\lim_{t\to \infty} P_{n+1}(\xx,t)=1$, there is an extra, unique root of $(P_{n+1})'_t$ in $(x_n,\infty)$, which implies, that $z_{n+1}(\xx)$ is also well-defined.

In particular, $z_{n+1}(\xx)$ is the largest zero of $(P_{n+1})'_t$.
\end{proof}

Based on the ideas of \cite{Kilgore-Cheney}, we can prove the following identity for the partial derivatives $\frac{\partial m_i}{\partial x_j} (\xx)$.

\begin{lemma}\label{l:Kilgore_Cheney}
Consider the interpolation operator associated with an ECH system of functions on the interval $\II=[0,\infty)$ and let $\xx\in S$. Then,
\begin{itemize}
\item[1)]
\[
\frac{\partial  P_i}{\partial x_j} (\xx,t)=-h_j(\xx,t)(P_i)'_t(\xx,x_j), \quad (t\in \RR).
\]
\item[2)]
If $\bf{1}$ belongs to the ECH system, then 
\[
\frac{\partial m_i}{\partial x_j}(\xx)=\frac{\partial P_i}{\partial x_j} (\xx,z_i(\xx))\bigg(=-h_j(\xx,z_i(\xx))(P_i)'_t(\xx,x_j)\bigg).
\]
\end{itemize}
\end{lemma}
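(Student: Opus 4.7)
My plan is to prove Part 1 first by identifying $\partial P_i/\partial x_j(\xx,\cdot)$ inside $\YY_n$ via its interpolation data, and then to deduce Part 2 from the envelope theorem at the interior maximum $z_i(\xx)$ produced by Lemma~\ref{l:zi_interior}.

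For Part 1, I would expand $P_i(\xx,t)$ in the fixed basis $\{e^{-t}, te^{-t},\ldots, t^n e^{-t}, \mathbf{1}\}$ of $\YY_n$; since the expansion coefficients depend smoothly on $\xx$, it follows that $\phi(t) := \partial P_i/\partial x_j(\xx,t)$ again lies in $\YY_n$. The values of $\phi$ at the $n+2$ natural Chebyshev--Haar data of $\YY_n$ can be read off by differentiating the identities $P_i(\xx, x_m) = \varepsilon_{m,i}$: for $m \neq j$ this gives directly $\phi(x_m) = 0$, while for $m = j$ the chain rule gives $\phi(x_j) + (P_i)'_t(\xx, x_j) = 0$; and from $\lim_{t\to\infty} P_i(\xx, t) = \varepsilon_{n+1, i}$ one has $\lim_{t\to\infty}\phi(t) = 0$. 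Because $\YY_n$ is a rank $n+1$ Extended Chebyshev--Haar system of dimension $n+2$, these $n+2$ conditions determine any member of $\YY_n$ uniquely, so the Lagrange expansion $\phi = \sum_{k=0}^n \phi(x_k) h_k(\xx,\cdot) + \phi(\infty) h_{n+1}(\xx,\cdot)$ collapses to $\phi(t) = -(P_i)'_t(\xx, x_j)\, h_j(\xx, t)$. This is the claimed formula for every $t$, a fortiori at $t = z_i(\xx)$.

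For Part 2, Lemma~\ref{l:zi_interior} produces $z_i(\xx) \in \intt I_i$ as the unique critical point of $P_i(\xx, \cdot)$ on $I_i$, so $(P_i)'_t(\xx, z_i) = 0$. To apply the envelope theorem I need $m_i$ to be $C^1$, which I would obtain from the implicit function theorem applied to $(P_i)'_t(\xx, z) = 0$ once $(P_i)''_{tt}(\xx, z_i) \neq 0$. This last nondegeneracy follows from the zero-counting already performed in Lemma~\ref{l:zi_interior}: $(P_i)'_t \in \WW_n$ realises the maximal possible number of real zeros, distributed one per gap between consecutive zeros of $P_i$ (with the extra root in $I_{n+1}$ in the nearly oscillating cases), so $z_i$ is necessarily a simple zero. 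The chain rule then yields
$$
\frac{\partial m_i}{\partial x_j}(\xx) = \frac{\partial P_i}{\partial x_j}(\xx, z_i(\xx)) + (P_i)'_t(\xx, z_i)\,\frac{\partial z_i}{\partial x_j}(\xx) = \frac{\partial P_i}{\partial x_j}(\xx, z_i(\xx)),
$$
and combining with Part 1 at $t = z_i(\xx)$ finishes the statement.

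The main obstacle is the smoothness step of Part 2, namely cleanly certifying $(P_i)''_{tt}(\xx, z_i) \neq 0$ and hence the $C^1$ dependence of $z_i$ (and $m_i$) on $\xx$, in all the subcases of $i$ -- most delicately when $i = r$ with $a_r = 0$, and at the right endpoint $i = n+1$ where $z_{n+1}$ lies in the unbounded interval and is the largest zero of $(P_{n+1})'_t$. Once $z_i$ is a simple root of $(P_i)'_t$ the implicit function theorem handles everything, but the counting must be performed case by case exactly as in Lemma~\ref{l:zi_interior}. Part 1 is essentially bookkeeping, the conceptual point being that the ``interpolation at infinity'' plays the role of the $(n+2)$nd Chebyshev--Haar condition needed to identify an element of $\YY_n$.
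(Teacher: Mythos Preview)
Your proposal is correct. For Part~2 your argument coincides with the paper's: both invoke Lemma~\ref{l:zi_interior} to place $z_i$ in the interior, apply the implicit function theorem to $(P_i)'_t(\xx,z)=0$, and then use the chain rule; you are in fact more scrupulous than the paper in checking the nondegeneracy $(P_i)''_{tt}(\xx,z_i)\ne 0$ via the simplicity of the zeros of $(P_i)'_t$.

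For Part~1 the routes genuinely differ. The paper takes a finite-difference approach: perturbing only the $j$th node to $x'_j$, it shows $P_i(\xx,\cdot)-P_i(\xx',\cdot)$ lies in the Chebyshev--Haar system and vanishes at the $n$ unperturbed nodes, hence is a scalar multiple of $h_j(\xx',\cdot)$; evaluating at $t=x'_j$ pins down the scalar as $P_i(\xx,x'_j)-\varepsilon_{j,i}$, and dividing by $x'_j-x_j$ and letting $x'_j\to x_j$ produces the derivative formula. Your argument instead differentiates inside $\YY_n$ from the outset: you observe $\phi:=\partial P_i/\partial x_j(\xx,\cdot)\in\YY_n$, compute its full interpolation data (values at $x_0,\ldots,x_n$ and limit at~$\infty$) by differentiating the identities $P_i(\xx,x_m)=\varepsilon_{m,i}$ and $\lim_\infty P_i=\varepsilon_{n+1,i}$, and read off the answer from the Lagrange expansion. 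Your approach is a bit more direct and avoids the difference-quotient limit; the paper's approach ties the result more visibly to the classical Kilgore--Cheney lemma it cites and does not require first arguing that the basis coefficients of $P_i$ depend smoothly on $\xx$. Both yield the identity for all $t$, not just at $z_i(\xx)$.
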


\begin{proof}
Concerning Part 1), let $\xx\in S$ and $\xx'\in S$ be given by $x'_k=x_k$ for each $k\in \{1,\ldots,n\}\backslash\{j\}$ and $x_j'\in (x_{j-1},x_j)$. According to Lemma 2 of Kilgore and Cheney in \cite{Kilgore-Cheney}, we have
\begin{equation}\label{K-Ch_diff}
\begin{aligned}
P_j(\xx,t)-P_j(\xx',t)&=(P_j(\xx,x_j')-1)h_j(\xx',t)\\
P_{j-1}(\xx,t)-P_{j-1}(\xx',t)&=(P_{j-1}(\xx,x_j')-1)h_j(\xx',t).
\end{aligned}
\end{equation}
We now extend this result to an arbitrary $P_i$ not necessarily equal to $P_j$ or $P_{j-1}$. Namely, we are going to prove that
\begin{equation}\label{K-Ch_diff_j}
P_i(\xx,t)-P_i(\xx',t)=(P_i(\xx,x_j')-\varepsilon_{j,i})h_j(\xx',t).
\end{equation}

Observe that the function $P_i(\xx,t)-P_i(\xx',t)$ belongs to the ECH system and vanishes at $n$ nodes ($x_k$, $k=0,\ldots,n$, $k\ne j$). Since $h_j(\xx',t)$ has zeros at the same points, the ECHS property guarantees that there exists a constant $c$ such that
\[
P_i(\xx,t)-P_i(\xx',t)=c\cdot h_j(\xx',t).
\]
Moreover, the corresponding constant factor is given by
\[
c=\restr{\frac{P_i(\xx,t)-P_i(\xx',t)}{h_j(\xx',t)}}{t=x'_j}=P_i(\xx,x_j')-\varepsilon_{j,i},
\]
proving \eqref{K-Ch_diff_j}.
Consequently, we obtain Part 1) by the following calculation:
\begin{equation*}
\begin{aligned}
-\frac{\partial P_i}{\partial x_j}(\xx,t)&:=\lim\limits_{x_j'\to x_j} \frac{P_i(\xx,t)-P_i(\xx',t)}{x_j'-x_j}=\lim\limits_{x_j'\to x_j} \frac{(P_i(\xx,x_j')-\varepsilon_{j,i})h_j(\xx',t)}{x_j'-x_j}\\&
=\lim\limits_{x_j'\to x_j} \frac{P_i(\xx,x_j')-P_i(\xx,x_j)}{x_j'-x_j}\cdot h_j(\xx,t) =(P_i)'_t(\xx,x_j)\cdot h_j(\xx,t).
\end{aligned}
\end{equation*}

For the first equation in Part 2), we use that $z_i(\xx)\in \intt I_i$. This is guaranteed by Lemma \ref{l:zi_interior} for every $i=1,\dots,n+1$, provided that $\bf{1}$ belongs to the ECH system we work with.

Since $z_i(\xx)\in \intt I_i$, we get that $P_i'(\xx,z_i(\xx))=0$. By the Implicit Function Theorem $z_i$ is continuously differentiable, and its derivative with respect to $x_j$ is given by the formula
\begin{equation*}\label{zi_der}
\frac{\partial z_i}{\partial x_j}(\xx)= -\frac{1}{(P_i)''_{tt}(\xx,z_i(\xx))} \frac{\partial (P_i)'_t}{\partial x_j}(\xx,z_i(\xx)).
\end{equation*}

Hence, we obtain that
\[
\frac{\partial m_i}{\partial x_j}(\xx)=
\frac{\partial P_i}{\partial x_j}(\xx,z_i(\xx)) + (P_i)'_t(\xx,z_i(\xx))\frac{\partial z_i}{\partial x_j}(\xx)= \frac{\partial P_i}{\partial x_j}(\xx,z_i(\xx)).
\]

\end{proof}

Applying the previous Lemma \ref{l:Kilgore_Cheney}, and the definition \eqref{eq_hkt} of $h_j$, we obtain, by setting $z_i:=z_i(\xx)$, that
\begin{align*}
\frac{\partial m_i}{\partial x_j}(\xx)=-h_j(\xx,z_i)(P_i)_t'(x_j)
=\frac{e^{-z_i} \prod_{l=0}^n(z_i-x_l) ~ (P_i)_t'(x_j)}{e^{-x_j}\prod_{l=0,~l\ne j}^n (x_j-x_l) ~ (x_j-z_i)}.
\end{align*}

Therefore the singularity properties of the matrices formed by these partial derivatives (with leaving out one index $k$, i.e., taking $i=1,\ldots,n+1$, $i\ne k$) are equivalent to those of the derived matrices when we divide the $i$th row by $e^{-z_i} \prod_{l=0}^n(z_i-x_l)$, and multiply the $j$th column by $e^{-x_j}\prod_{l=0,~l\ne j}^n (x_j-x_l)$. This transformation was suggested by D. Braess to T. Kilgore and is duly recorded in the papers of Kilgore and Cheney \cite{Kilgore-Cheney}, Kilgore \cite{K-AMH}, and of de Boor-Pinkus \cite{CBoorPinkus}. So, equivalently to the original aim of determining the singularity properties of the matrices
$$A_k:=\left[\frac{\partial m_i}{\partial x_j}\right]_{i=1, i\ne k, j=1}^{n+1, n},$$ it suffices to analyze the singularity of the submatrices, obtained by dropping the $k$th row from
\begin{equation*}
Q:=[q_i(x_j)]_{i=1, j=1}^{n+1,n}, \quad \text{where} \quad q_i(t):=\frac{(P_i)_t'(t)}{t-z_i} \quad(i=1,\ldots,n+1);
\end{equation*}
that is,
\begin{equation}\label{Qmatrix}
\begin{aligned}
Q_k:=
\begin{bmatrix}
\frac{(P_1)'_t(x_1)}{x_1-z_1} & \frac{(P_1)'_t(x_2)}{x_2-z_1}& \dots & \frac{(P_1)'_t(x_n)}{x_n-z_1}\\[2mm] 
\vdots & \vdots & \ddots & \vdots\\[2mm]
\frac{(P_{k-1})'_t(x_1)}{x_1-z_{k-1}} & \frac{(P_{k-1})'_t(x_2)}{x_2-z_{k-1}}& \dots &
\frac{(P_{k-1})'_t(x_n)}{x_n-z_{k-1}}\\[2mm]
\frac{(P_{k+1})'_t(x_1)}{x_1-z_{k+1}} & \frac{(P_{k+1})'_t(x_2)}{x_2-z_{k+1}}& \dots &\frac{(P_{k+1})'_t(x_n)}{x_n-z_{k+1}}\\[2mm]
\vdots & \vdots & \ddots & \vdots \\[2mm]
\frac{(P_{n+1})'_t(x_1)}{x_1-z_{n+1}} & \frac{(P_{n+1})'_t(x_2)}{x_2-z_{n+1}} & \dots & \frac{(P_{n+1})'_t(x_n)}{x_n-z_{n+1}}\\
\end{bmatrix}.
\end{aligned}
\end{equation}
Note that the functions $q_i$ all belong to $\WW_{n-1}$, which forms a rank $n-1$ ECHS.

\vspace{1mm}
Our next goal is to apply Proposition 1 in \cite{Kilgore1985}, stated below for completeness.

\begin{lemma}[Kilgore]\label{l:Kilgore_orig}
Let $z_1, \dots, z_{n+1}$ be real numbers with $z_1<z_2<\dots< z_{n+1}$ and $q_1,\dots, q_{n+1}$ be functions from a rank $n-1$ ECHS. Assume that $q_1,\dots, q_{n+1}$ are such that
\begin{itemize}
\item[1)] $q_i(z_j)\ne 0$ for every $i, j=1,\dots,n+1$,
\item[2)] $q_i$ has a unique, simple root in $[z_{j},z_{j+1}]$ for all  $j\ne i-1,i$,
\item[3)] $q_i$ does not have any roots in $[z_{i-1},z_{i+1}]$.
\end{itemize}
Then, for any $k\in \{1,\dots, n+1\}$, the set $\{q_1,\dots, q_{n+1}\}\setminus \{q_k\}$ of functions is linearly independent.
\end{lemma}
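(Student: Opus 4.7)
The plan is to show that, for any $k$, the $n$ functions $\{q_i\}_{i \ne k}$ form a linearly independent (hence basis) set in the $n$-dimensional Chebyshev--Haar space $U$. First I would normalize each $q_i$ by a sign flip so that $q_i(z_i)>0$; this does not affect linear independence. By conditions 2 and 3, the sign $\sigma_{ij}:=\sign q_i(z_j)$ is then completely determined: $\sigma_{ij}=+1$ for $|i-j|\le 1$, and $\sigma_{ij}=(-1)^{|i-j|-1}$ otherwise. Second, since any nontrivial $f\in U$ has at most $n-1$ zeros, the evaluation map $\Phi:U\to\RR^{n+1}$, $f\mapsto (f(z_j))_j$, is injective; hence linear independence of $\{q_i\}_{i\ne k}$ in $U$ is equivalent to linear independence of the vectors $\{\Phi(q_i)\}_{i\ne k}$ in $\RR^{n+1}$, and in turn to the non-vanishing of some $n\times n$ minor of the $(n+1)\times n$ matrix $[q_i(z_j)]_{i\ne k,\,j}$.

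My plan is to reduce the dimension by one using the zeros of $q_1$. By condition 2 applied with $i=1$, the function $q_1$ has exactly the $n-1$ simple zeros $y_2^{(1)}<\dots<y_n^{(1)}$, one in each open interval $(z_j,z_{j+1})$ for $j=2,\dots,n$. Assuming $k\ne 1$, I would consider the $n\times n$ evaluation matrix for $\{q_i\}_{i\ne k}$ at the $n$ distinct points $\{z_1,y_2^{(1)},\dots,y_n^{(1)}\}$. The column corresponding to $q_1$ has its only nonzero entry at the row of $z_1$ (equal to $q_1(z_1)\ne 0$ by condition 1); cofactor expansion along this column therefore reduces the nonvanishing of the full $n\times n$ determinant to the nonvanishing of the $(n-1)\times(n-1)$ minor
\[
\det\bigl[q_i(y_j^{(1)})\bigr]_{i\ne 1,k;\;j=2,\dots,n}.
\]
The boundary case $k=1$ is handled symmetrically, using instead the $n-1$ simple zeros of $q_{n+1}$ together with the point $z_{n+1}$.

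The main obstacle is turning this reduction into a clean induction on $n$. One must verify that: (a) the restriction map $U\to\RR^{n-1}$, $f\mapsto (f(y_j^{(1)}))_{j=2}^n$, is surjective with 1-dimensional kernel $\spn\{q_1\}$, so the quotient (equivalently, the image) serves as an ambient $(n-1)$-dimensional space; (b) the restricted functions $\{q_i\}_{i\ne 1}$, viewed through the evaluations at $y_2^{(1)},\dots,y_n^{(1)}$, inherit a Chebyshev--Haar type structure; and (c) their zeros relative to the new ``node system'' $y_j^{(1)}$ satisfy analogues of conditions 1--3, so that the $(n-1)$-dimensional version of the lemma can be invoked on the surviving $n$ functions. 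Carrying (a)--(c) through requires careful tracking of how the zero-interlacing and sign structure descend under the reduction, especially near the endpoints and when $k$ is adjacent to $1$ or $n+1$, and this is the delicate technical heart of the argument. An alternative, more concrete route would be a direct sign analysis of the full $n\times n$ determinant via its Leibniz expansion; however, the sign pattern alone does not force all summands to agree in sign, so such an approach must also exploit the magnitude constraints provided by the Chebyshev--Haar property, leading to substantially heavier bookkeeping.
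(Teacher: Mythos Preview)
The paper does not prove this lemma at all: it is quoted as Proposition~1 from Kilgore's 1985 note \cite{Kilgore1985} and used as a black box, so there is no in-paper argument to compare your proposal against.

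That said, your strategy has a real obstruction, not just unfinished bookkeeping. The cofactor reduction to the $(n-1)\times(n-1)$ minor $\det[q_i(y_j^{(1)})]_{i\ne 1,k;\,j=2,\dots,n}$ is valid, but the induction you outline in (b)--(c) cannot be closed from the stated hypotheses alone. Conditions 1)--3) constrain each $q_i$ individually: they say that $q_i$ has a unique zero $y_j^{(i)}$ somewhere in $(z_j,z_{j+1})$, but they impose no relation whatsoever between $y_j^{(i)}$ and $y_j^{(1)}$ inside that interval. Hence with respect to the new node system $y_2^{(1)},\dots,y_n^{(1)}$ the zeros of $q_i$ need not fall one-per-interval in the pattern required by condition 2), and even condition 1) can fail since nothing rules out $y_j^{(i)}=y_j^{(1)}$. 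Passing to projected representatives $\tilde q_i=q_i-c_iq_1$ does not help: these have new zeros that are not controlled by the original hypotheses either. In the paper's particular application there \emph{is} extra interlacing information (the zeros of the $(P_i)'_t$ interlace pairwise, via Theorem~\ref{th:hybrid-Markov} and Corollary~\ref{c:order_roots_total}), and with that additional input an inductive reduction of your shape could be made to work; but the abstract lemma does not carry that information, so you are not recursing on the same statement.

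Kilgore's original argument is short and does not proceed by this kind of dimension reduction; it is closer in spirit to your ``alternative'' route, exploiting the forced sign pattern together with the Haar property to count sign changes of a putative nontrivial relation and reach a contradiction. If you want a self-contained proof, that is the direction to pursue rather than the induction.
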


\begin{remark}\label{r:places_of_zi_roots}
Regarding the complete set of zeros of the functions $q_i$ $(i=1,\dots,n+1)$, apart from those taken into account in the interval $[z_1,z_{n+1}]$ in the previous Lemma, each $q_i$ has an extra unique, simple root in $\RR\setminus [z_1,z_{n+1}]$, except the case when $i=r$ and $a_r=0$.

To see this, we can refer to the proof of Lemma \ref{l:zi_interior}, where it was established that $z_1$ is the first zero of $(P_1)'_t$. Hence, from the ordering of the respective zero sequences, we find that there are roots of $(P_i)'_t$ before $z_1$ precisely when $i\le r$ (or, in case $a_r=0$, for $i<r$), yielding altogether $r-1$ (or $r-2$) zeros; and $z_{n+1}$ is the last zero of $(P_{n+1})'_t$, hence there are roots of $(P_i)'_t$ exceeding $z_{n+1}$ precisely for $i>r$, altogether $n+1-r$ roots.
\end{remark}

\begin{lemma}\label{l:Kilgore_assump}
The functions $q_i(t):=\frac{(P_i)_t'(t)}{t-z_i}$ $(i=1,\dots,n+1)$ satisfy the properties  1) -- 3) listed in Lemma \ref{l:Kilgore_orig}.
\end{lemma}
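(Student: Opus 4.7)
The plan is to verify the three properties of Lemma \ref{l:Kilgore_orig} in turn, drawing on the Inheritance Theorem \ref{th:hybrid-Markov} in the form organised by Remark \ref{rem:hybrid-Markov}. First I would note that $(P_i)'_t \in \WW_n$, so after dividing out the factor $(t - z_i)$ the function $q_i$ lies in the $n$-dimensional Chebyshev--Haar system $\WW_{n-1}$, as required by the ambient-space hypothesis of the lemma; moreover, Lemma \ref{l:derivative_zero} already supplies as many zeros of $(P_i)'_t$ as the Chebyshev rank of the relevant subspace permits, so every such zero, including $z_i$, is simple. Property 1) is then immediate: for $i \ne j$, applying Theorem \ref{th:hybrid-Markov} to the pair $(P_i, P_j)$ in the orientation prescribed by Remark \ref{rem:hybrid-Markov} gives strict interlacing of the zero sets of $(P_i)'_t$ and $(P_j)'_t$, so the two sets are disjoint and $q_i(z_j) = \pm (P_i)'_t(z_j)/(z_j - z_i) \ne 0$; for $j = i$, simplicity of $z_i$ yields $q_i(z_i) = \pm (P_i)''_t(z_i) \ne 0$.

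For Properties 2) and 3), I would enumerate the zeros of $(P_i)'_t$ left-to-right as $\zeta_1^{(i)} < \cdots < \zeta_{L_i}^{(i)}$ and identify the ordinal $\rho(i)$ with $z_i = \zeta_{\rho(i)}^{(i)}$: reading Lemma \ref{l:derivative_zero} against the zero-list of $P_i$ given by Corollary \ref{c:order_roots_total} shows $\rho(i) = i$ when $0 \in \Ji$ (the ``low'' regime) and $\rho(i) = i - 1$ otherwise. Then, for each $j \ne i$, Theorem \ref{th:hybrid-Markov} yields strict interlacing of $\{\zeta_k^{(i)}\}$ with $\{\zeta_k^{(j)}\}$, and substituting $z_j = \zeta_{\rho(j)}^{(j)}$ pins $z_j$ strictly between two specific consecutive members of $\{\zeta_k^{(i)}\}$. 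Collating these sandwiches over all $j \ne i$ shows that each slot $[z_j, z_{j+1}]$ with $j \in \{1,\dots,n\} \setminus \{i-1, i\}$ receives exactly one non-$z_i$ zero of $(P_i)'_t$ (Property 2), while the two slots $[z_{i-1}, z_i]$ and $[z_i, z_{i+1}]$ contain $z_i$ alone (Property 3); any remaining zero of $q_i$ lies outside $[z_1, z_{n+1}]$, as described in Remark \ref{r:places_of_zi_roots}, and is irrelevant to the lemma's hypotheses.

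The hard part will be the combinatorial case split encoded in Remark \ref{rem:hybrid-Markov}: both the orientation of the interlacing (which of $P_i, P_j$ ``leads'') and the ordinal $\rho$ switch across the index $r$, with the precise transition depending on whether $a_r = 0$. A systematic verification over the five cases of Remark \ref{rem:hybrid-Markov}, crossed with the two regime options for $\rho(i)$ and $\rho(j)$, will be required to obtain the slot-filling uniformly across all configurations.
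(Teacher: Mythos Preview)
Your proposal is correct and shares with the paper the use of Theorem \ref{th:hybrid-Markov} to obtain pairwise strict interlacing of the derivative-zero sequences; in particular your treatment of Property~1) matches the paper's. Where you diverge is in Properties~2) and~3): you plan a direct pair-by-pair computation, identifying for each fixed $i$ and each $j\ne i$ the precise slot $(\zeta_\sigma^{(i)},\zeta_{\sigma+1}^{(i)})$ containing $z_j$, and then inverting that information to locate each $\zeta_k^{(i)}$ among the $z$-intervals---a calculation you rightly anticipate will branch over the five cases of Remark \ref{rem:hybrid-Markov} crossed with the two $\rho$-regimes. The paper instead packages the collation into a single structural lemma (Lemma \ref{l:interlacing_seq}) asserting that any family of $\nu$ pairwise-interlacing equal-length sequences is globally lexicographically ordered and that any interval meeting their union in exactly $\nu$ points meets each sequence once; it then proves a counting result (Corollary \ref{c:zi_cyclically}) showing $\#(W\cap[z_j,z_{j+1}])=n+1$ via a global double count, after padding $W^{(r)}$ by one artificial point when $a_r=0$ so all sequences have length $n$. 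Combining these two gives Properties~2) and~3) uniformly, with no per-pair bookkeeping. Your route would work and is conceptually transparent, but the paper's abstraction buys a case-free argument; conversely, your approach avoids introducing the auxiliary lemmas and the padding device, at the cost of the promised combinatorial verification.
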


In the proof of Lemma \ref{l:Kilgore_assump}, we need an auxiliary result about pairwise strictly interlacing sequences.  This was already used in \cite{Kilgore} and \cite{CBoorPinkus}, and ever since, but we find it necessary to give a clean formulation and a precise proof for it.

So let $\mathcal{A}'=\{ a_1'<\dots<a_i'<\dots<a_n'\}$ and $\mathcal{A}''= \{ a_1''<\dots<a_i''<\dots<a_n''\}$ be two sequences; in the following we will consider only increasing sequences, and, somewhat loosely, we identify them with the sets of their elements, too.

We say that $\A'$ precedes $\mathcal{A}''$, and we write $\mathcal{A}'\prec \mathcal{A}''$, if $a_i'<a_i''$ for every $i=1,\dots,n$ and $a_i''<a_{i+1}'$ for every $i=1,\dots, n-1$.
If either $\mathcal{A}'\prec \mathcal{A}''$ or $\mathcal{A}''\prec \mathcal{A}'$, then we say that $\A', \A''$ are strictly interlacing.

Now if we choose two elements $\alpha, \beta \in \mathcal{A}' $, then we write $\alpha \llcurly \beta$ to mean that $\alpha<\beta$ and $\A'$ does not contain any other element in $(\alpha,\beta)$. (That means $\alpha=a_i'$ and $\beta=a_{i+1}'$ for some $i=1,\ldots,n-1$.)

Further, if  $\alpha \in \mathcal{A}'$ and $\beta \in \mathcal{A}''$ for two strictly interlacing sequences $\A'$ and $\A''$, then $\alpha \llcurly \beta$ will mean that $\alpha<\beta$ and neither $\mathcal{A}'$, nor $\mathcal{A}''$ has any element in $(\alpha,\beta)$.

\begin{lemma}\label{l:interlacing_seq}
If $\mathcal{A}^{(j)}=\{ a_1^{(j)}<\dots<a_i^{(j)}<\dots<a_n^{(j)}\}$ are pairwise strictly interlacing sequences for $j=1,\dots, \nu$, such that $\mathcal{A}^{(j)}\prec \mathcal{A}^{(k)}$ whenever $j< k$, then

\begin{itemize}
\item[1)] the elements of the set $\mathcal{A}:=\bigcup_{j=1}^\nu \A^{(j)} =\{a_i^{(j)}: i=1,\dots, n, j=1,\dots, \nu \}$ are ordered lexicographically, i.e., $a_i^{(j)}\le a_l^{(k)}$ if and only if $i<l$ or $i=l$ and $j\le k$.

\item[2)] for any two elements $a_i^{(j)}, a_l^{(k)}$ of $\A$, such that $a_i^{(j)}\le a_l^{(k)}$ we have 
\begin{equation}\label{cardinality}
\#\left(\A\cap [a_i^{(j)}, a_l^{(k)}]\right)=k-j+1+(l-i)\nu.
\end{equation}

\item[3)] if an interval $[\alpha,\beta]$ contains exactly $\nu$ elements of $\A$, then these $\nu$ elements contain exactly one from each of the $\A^{(j)}$, $j=1,\ldots,\nu$.
\end{itemize}
\end{lemma}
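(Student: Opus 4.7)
The approach is straightforward combinatorics built on two elementary observations about the interlacing structure. First I would establish the crucial \emph{cross-comparison}: for any indices $j,k \in \{1,\ldots,\nu\}$ and $i \in \{1,\ldots,n-1\}$, we always have $a_i^{(j)} < a_{i+1}^{(k)}$. This splits into three subcases: if $j=k$, this is just monotonicity of $\A^{(j)}$; if $j<k$, then $\A^{(j)} \prec \A^{(k)}$ yields $a_i^{(j)} < a_i^{(k)} < a_{i+1}^{(k)}$; if $j>k$, then $\A^{(k)} \prec \A^{(j)}$ gives directly $a_i^{(j)} < a_{i+1}^{(k)}$ from the second condition in the definition of $\prec$. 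Combining this with $a_i^{(j)} < a_i^{(k)}$ whenever $j<k$ (also from $\A^{(j)} \prec \A^{(k)}$), I obtain the chain
\[
a_i^{(1)} < a_i^{(2)} < \cdots < a_i^{(\nu)} < a_{i+1}^{(1)} < a_{i+1}^{(2)} < \cdots < a_{i+1}^{(\nu)},
\]
valid for every $i=1,\ldots,n-1$. Concatenating these chains over $i=1,\ldots,n$ gives exactly the lexicographic order claimed in Part 1, namely that $a_i^{(j)} \le a_l^{(k)}$ iff $i<l$, or $i=l$ and $j \le k$.

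For Part 2, I would simply count by ``rows'' (the index $i$ being the row, the index $j$ the column in the lex table). Given $a_i^{(j)} \le a_l^{(k)}$, the interval $[a_i^{(j)}, a_l^{(k)}]$ contains: in row $i$ the entries $a_i^{(j)}, a_i^{(j+1)},\ldots, a_i^{(\nu)}$, giving $\nu-j+1$ elements; in every intermediate row $i+1, i+2,\ldots, l-1$ (if any) all $\nu$ entries, contributing $(l-i-1)\nu$ elements; and in row $l$ the entries $a_l^{(1)},\ldots, a_l^{(k)}$, contributing $k$ elements. The total $(\nu-j+1) + (l-i-1)\nu + k = k-j+1 + (l-i)\nu$ agrees with \eqref{cardinality}, and the formula degenerates correctly when $l=i$ (no intermediate rows, and the same row is truncated on both sides to yield $k-j+1$).

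Part 3 is then immediate from the structure of the lex chain: any window of $\nu$ consecutive elements either (a) coincides with a full row block $\{a_i^{(1)}, \ldots, a_i^{(\nu)}\}$, in which case each sequence index $j=1,\ldots,\nu$ occurs exactly once by construction; or (b) straddles two consecutive row blocks, consisting of a tail $\{a_i^{(j)}, a_i^{(j+1)},\ldots, a_i^{(\nu)}\}$ and a head $\{a_{i+1}^{(1)},\ldots, a_{i+1}^{(j-1)}\}$ for some $j \ge 2$. In case (b) the upper indices are exactly $\{j, j+1, \ldots, \nu, 1, 2, \ldots, j-1\} = \{1,\ldots,\nu\}$, again with no repetitions. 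Thus in both cases the $\nu$ selected elements contain one and only one member from each $\A^{(j)}$, proving Part 3.

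The only mildly tricky step is the cross-comparison $a_i^{(j)} < a_{i+1}^{(k)}$ for arbitrary ordering of $j,k$, since the hypothesis $\A^{(j)} \prec \A^{(k)}$ is given only for $j<k$; once that is in place everything reduces to bookkeeping. No obstacle of genuine depth appears.
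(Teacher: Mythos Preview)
Your proof is correct and follows essentially the same line as the paper's: the cross-comparison $a_i^{(j)}<a_{i+1}^{(k)}$ is established by the identical three-case split, and Part 2 is the same row-count (the paper in fact leaves this to the reader). The only difference is in Part 3, where the paper argues by contradiction---two elements from the same $\A^{(j)}$ in $[\alpha,\beta]$ would force, by interlacing, one element from each of the other $\nu-1$ sequences in between, giving at least $\nu+1$ points---while you enumerate the upper indices in a window of $\nu$ consecutive elements directly; both arguments are equally short and valid.
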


\begin{proof}
We can visualize $\A$ as a matrix
\begin{equation}\label{matrix_A}
[a_i^{(j)}]_{i=1, j=1}^{n, \nu}= \begin{bmatrix}
a_1^{(1)} & \dots & a_1^{(\nu)}\\
a_2^{(1)} & \dots & a_2^{(\nu)}\\
\vdots & \ddots & \vdots\\
a_n^{(1)} & \dots & a_n^{(\nu)}
\end{bmatrix}.
\end{equation}

Within each row of $\A$, the ordering is determined by the upper index; $a_i^{(j)}<a_i^{(k)}$ if and only if $j<k$, because $a_i^{(j)}<a_i^{(k)}$ is just the first requirement of the definition of $\A^{(j)}\prec \A^{(k)}$.

Between elements of two consecutive rows we have $a_i^{(j)}<a_{i+1}^{(k)}$, always. If $j=k$, then this is just the monotonicity assumption on $\A^{(j)}$. If $\A^{(j)}\prec \A^{(k)}$, then the same monotonicity of $\A^{(j)}$ gives $a_i^{(j)}<a_{i+1}^{(j)}$, and then by $\A^{(j)}\prec \A^{(k)}$ we also have $a_{i+1}^{(j)} < a_{i+1}^{(k)}$, giving the assertion. Finally, if $\A^{(k)}\prec \A^{(j)}$,
then $a_i^{(j)}<a_{i+1}^{(k)}$ because of the second requirement in the definition of $\A^{(k)}\prec \A^{(j)}$.

Thus, the elements $a_i^{(j)}$ of $\A$ are ordered lexicographically by their indices $(i,j)$, proving (1).

For Part 2), we need the number of indices $(p,q)$ lying between $(i,j)$ and $(l,k)$ in the lexicographical ordering. The reader will have no difficulty in seeing that the number of indices between $(i,j)$ and $(l,k)$ is just as stated.

Finally, Part 3) follows easily from the observation that if two elements from the same $\A^{(j)}$ are in $[\alpha,\beta]$, then by interlacing all the other $\A^{(i)}$ must also have an element between these two, altogether adding up to $2+(\nu-1)>\nu$ elements of $\A$.
\end{proof}

Recall that the root sequences of the functions $P_i$ are pairwise strictly interlacing; more precisely, they follow each other as stated in Corollary \ref{c:order_roots_total}. Note that root sequences of our $P_i$ did not always contain $n+1$ elements, but could have only $n$, so that we needed the ``nearly oscillating cases'' of the respective Markov-type lemma.

Our next step is to apply the Markov-type inheritance lemma, i.e., Theorem \ref{th:hybrid-Markov}, worked out above. This furnishes that the root sequences $W^{(i)}$ of the derivatives $(P_i)'_t$ are strictly interlacing pairwise. However, from pairwise we need to obtain the global ordering, as in \eqref{order_total}, which requires additional consideration, since the Markov-type result concerns only pairwise precedences. The need to address the arising zero-counting argumentation across various intervals made it convenient to state Lemma \ref{l:interlacing_seq} in its precise form.

From the findings about how the roots of the hybrid polynomials $P_i$ follow each other, we have here from Theorem \ref{th:hybrid-Markov} that $W^{(i)}\prec W^{(j)}$ in the order described by $r \prec r-1 \prec \dots \prec 1 \prec n+1 \prec n \prec \dots \prec r+1$. In the case when $a_r=0$, we modify the definition of $W^{(r)}$ by adding an extra element to this set, such that the new element is smaller than the smallest root of $(P_{r-1})'_t$. This way we get that $W^{(i)}$ contains the same number of elements -- that is, $n$ -- for each $i=1,\ldots,n+1$, and the pairwise precedence orderings remain the same as it can be read from the ordering of the roots of the hybrid polynomials $P_i$.

We are now in a position to state the result that we have been aiming at through these technical preparations.

\begin{corollary}\label{c:zi_cyclically}
If $W:=\bigcup\limits_{i=1}^{n+1} W^{(i)}$, then $r_i:=\#\left(W\cap [z_i,z_{i+1}]\right)=n+1$ for all $i=1,\ldots,n$.
\end{corollary}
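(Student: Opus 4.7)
The plan is to identify the row and column coordinates of each $z_i$ inside the matrix \eqref{matrix_A} associated with the pairwise interlacing sequences $W^{(i)}$, and then read off the desired cardinality directly from formula \eqref{cardinality} of Lemma \ref{l:interlacing_seq}.

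First I would pin down the position $p_i$ of $z_i$ within its own sequence $W^{(i)}$. Recall $z_i\in(x_{i-1},x_i)$ is the unique zero of $(P_i)'_t$ in this interval; from Corollary \ref{c:order_roots_total} one has $y_{i-1}^{(i)}<x_{i-1}<z_i<x_i<y_{i+1}^{(i)}$, so $z_i$ is exactly the derivative zero sandwiched between the two consecutive $P_i$-roots $y_{i-1}^{(i)}$ and $y_{i+1}^{(i)}$. Counting the $P_i$-zeros smaller than $y_{i-1}^{(i)}$ with the help of \eqref{Jistardef} and Lemma \ref{l:derivative_zero}, I expect to obtain $p_i=i$ when $i<r$ or when $i=r$ with $a_r\ne 0$, and $p_i=i-1$ when $r<i\le n+1$. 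The remaining edge case $i=r$ with $a_r=0$ places $z_r$ originally in position $r-1$, and augmenting $W^{(r)}$ with an additional smallest element -- exactly the adjustment prescribed just before the corollary -- bumps it to position $r$, yielding the uniform rule $p_i=i$ for $i\le r$ and $p_i=i-1$ for $i>r$.

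Next I would relabel to match the hypothesis of Lemma \ref{l:interlacing_seq}. Setting $\pi(j):=r+1-j$ for $1\le j\le r$ and $\pi(j):=n+r+2-j$ for $r<j\le n+1$, the precedence chain $W^{(r)}\prec W^{(r-1)}\prec\dots\prec W^{(1)}\prec W^{(n+1)}\prec\dots\prec W^{(r+1)}$ stated above the corollary becomes $\A^{(1)}\prec\dots\prec\A^{(n+1)}$ with $\A^{(j)}:=W^{(\pi(j))}$. Writing $q_i:=\pi^{-1}(i)$, so that $q_i=r+1-i$ for $i\le r$ and $q_i=n+r+2-i$ for $i>r$, the element $z_i$ occupies the entry in row $p_i$ and column $q_i$ of the matrix \eqref{matrix_A}, and \eqref{cardinality} then reads
\[
\#\bigl(W\cap[z_i,z_{i+1}]\bigr)=q_{i+1}-q_i+1+(p_{i+1}-p_i)(n+1).
\]

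The remaining step is a short case check. For $1\le i<r$ and for $r<i\le n$ one has $p_{i+1}-p_i=+1$ and $q_{i+1}-q_i=-1$, giving $-1+1+(n+1)=n+1$; for $i=r$ one has $p_{r+1}=p_r=r$ and $q_{r+1}-q_r=n+1-1=n$, giving $n+1$ again. The one subtlety I expect to be the main obstacle is precisely the transition at $i=r$: there both $z_r$ and $z_{r+1}$ must live on the \emph{same} row of the matrix in order to produce the count $n+1$, which is exactly why the modification of $W^{(r)}$ in the case $a_r=0$ is indispensable -- without it the row positions would misalign by one and the count would fail on this particular interval.
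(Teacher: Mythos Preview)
Your argument is correct, and it reaches the same conclusion by a somewhat different path than the paper.

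The paper does not attempt to pin down the exact row index of each $z_i$ in the matrix \eqref{matrix_A}. Instead it observes only the \emph{column} index: passing from $z_i$ to $z_{i+1}$ the column drops by $1$ (or jumps from $1$ to $n+1$ at $i=r$), so formula \eqref{cardinality} yields $r_i\equiv 0 \pmod{n+1}$ for each $i$. Together with $r_i\ge 2$ this gives $r_i\ge n+1$. The equality is then forced by a global count: using Remark~\ref{r:places_of_zi_roots} for the two outer pieces $r_0=r$ and $r_{n+1}=n-r+1$, and the identity $n(n+1)=\#W=\sum_{i=0}^{n+1} r_i-(n+1)$, one sees the inequalities $r_i\ge n+1$ must all be equalities.

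Your route, by contrast, determines both the row and the column of each $z_i$ exactly via the zero structure of $(P_i)'_t$ encoded in \eqref{Jistardef} and Lemma~\ref{l:derivative_zero}, and then reads off $r_i=n+1$ directly from \eqref{cardinality}. This buys you independence from Remark~\ref{r:places_of_zi_roots} and from the global summation; the price is the small case analysis $p_i=i$ vs.\ $p_i=i-1$ and the need to handle the transition at $i=r$ explicitly. You have correctly identified that this transition is where the augmentation of $W^{(r)}$ (when $a_r=0$) is essential: without it $p_r$ would be $r-1$ rather than $r$, and the count at $i=r$ would come out as $0$ from the formula rather than $n+1$. Both approaches rely on Lemma~\ref{l:interlacing_seq}, just on different parts of it.
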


\begin{proof}
Recall that the sets $W^{(i)}$ are pairwise strictly interlacing, such that $W^{(r)}\prec W^{(r-1)}\prec\dots \prec W^{(1)}\prec W^{(n+1)}\dots \prec W^{(r+1)}$.

By Remark \ref{r:places_of_zi_roots}, we know the following cardinalities:
\begin{equation}\label{r0rnplusone}
r_0 := \#\left(W\cap (-\infty,z_{1}]\right)=r, \quad r_{n+1}:=\#\left(W\cap [z_{n+1}, \infty)\right)=n-r+1.
\end{equation}

Now we compute the total number of points in $W$ two ways. As each $W^{(i)}$ has $n$ points, all different, and there are $n+1$ sequences $W^{(i)}$, this is trivially $n(n+1)$. On the other hand, we can sum up the number of points in the subintervals of $\RR$, defined by the strictly increasing sequence of the $z_i$. Note that $z_i\in \intt I_i$ entails that the sequence is strictly increasing. Also, by definition $z_i$ is the maximum place of $P_i$ in $I_i$, so it is a root of $(P_i)'_t$, i.e., $z_i\in W^{(i)}$, always.
Therefore, we get that
\begin{equation}\label{Wfirstpart}
n(n+1)= \# W=\sum_{i=0}^{n+1} r_i-(n+1),
\end{equation}
the $-(n+1)$ occurring in the end because in the sum $\sum_{i=0}^{n+1} r_i$, the endpoints $z_i$ of the intervals -- all belonging to $W$ -- were counted twice.

For a smoother application of Lemma \ref{l:interlacing_seq}, we renumber the sequences $W^{(i)}$ to form a system of sequences ordered in increasing order of indices. So let the new sets be $\A^{(j)}$, $j=1,\ldots,n+1$, where $\A^{(j)}=W^{(i)}$ with the correspondence between the indices $i$ and $j$ given by
\begin{align*}
i:=i(j)&:=r+1-j + \left\lceil \frac{j-r}{n+1} \right\rceil (n+1),
\\
j:=j(i)&:=r+1-i + \left\lceil \frac{i-r}{n+1} \right\rceil (n+1).
\end{align*}
The indices are defined so that now $\A^{(1)}=W^{(r)} \prec \A^{(2)}=W^{(r-1)}\prec\dots \prec \A^{(n+1)}=W^{(r+1)}$, whereas $W=\A:=\bigcup_{j=1}^{n+1} \A^{(j)}$. Note that $z_i \in W^{(i)} = \A^{(j(i))}$ and $z_{i+1} \in W^{(i+1)} =\A^{(j(i+1))}$ are either of the form $a^{(j)}\in \A^{(j)}$ and $a^{(j-1)} \in \A^{(j-1)}$, or $a^{(1)} \in \A^{(1)}$ and $a^{(n+1)} \in \A^{(n+1)}$. So, in \eqref{cardinality} we either have $k=j-1$, or if $j=1$, then $k=n+1$. In both cases, Part 2) of  Lemma \ref{l:interlacing_seq} provides $r_i=\# \left(\A \cap [z_i,z_{i+1}] \right) \equiv 0 \mod \nu$ where $\nu=n+1$. Note that $r_i \ge 2$, so $r_i \ge n+1$. Substituting this estimate and \eqref{r0rnplusone} into \eqref{Wfirstpart} we find
\[
n(n+1)= \# W \ge r +\sum_{i=1}^n r_i + (n-r+1) - (n+1) = \sum_{i=1}^n r_i \ge n(n+1),
\]
so that all inequalities must be equalities, and $r_i=n+1$  for all $i=1,\ldots,n$.

We note that the assertion remains unchanged independently of $a_r=0$ or not, because the additional extra point, put in $W^{(r)}$ in case $a_r=0$, lies below all points of $W$, in particular, below $z_1 \in W^{(1)}\subset W$, hence the numbers $r_i$, $i=1,\dots,n$, do never take them into account.
\end{proof}

\begin{proof}[Proof of Lemma \ref{l:Kilgore_assump}]
Firstly, we know that the derivative functions $(P_i)'_t$ ($i=1,\dots, n+1$) have simple and pairwise strictly interlacing, hence distinct zeros. Therefore, the definition of $q_i=\frac{(P_i)'_t}{t-z_i}$ guarantees that $q_i(z_j)\ne 0$ for all $i,j =1,\dots,n+1$. This yields Property 1).

As for Property 2), Corollary \ref{c:zi_cyclically} gives that $r_j:=\#(W\cap[z_j,z_{j+1}])=n+1$.
Thus, in view of Part 3) of Lemma \ref{l:interlacing_seq}, $[z_j,z_{j+1}]$ contains a simple, unique element of each $W^{(i)}$, i.e., a simple, unique root of $(P_i)'_t$, $i=1,\dots,n+1$. Now, if $i \ne j, j+1$, then this root is also a root of $q_i$, since $q_i$ has the same roots except for $z_i$, which lies outside of $[z_j,z_{j+1}]$ in this case. We therefore have Property 2).

On the other hand, let $i=j$ or $i=j+1$, (i.e., $j=i-1$ or $j=i$). The only root of $(P_i)'_t$ in the intervals $[z_{i-1},z_i]$ resp. $[z_i,z_{i+1}]$ is exactly $z_i$, which is cancelled in $q_i$, leaving no root of $q_i$ in $[z_{i-1},z_{i+1}]$. That was Property 3).

Therefore, the functions $q_i$ ($i=1,\dots,n+1$) satisfy all three desired properties.
\end{proof}
Applying Lemma \ref{l:Kilgore_orig} (Proposition 1 in \cite{Kilgore1985}), the previous Lemma implies that the system $\{q_1,\dots,q_{n+1}\}\setminus\{q_k\}$ is linearly independent.

\begin{lemma}\label{lem:keylemma}
Assume that $\sum_{i=1}^{n+1} \ai q_i(x_j)=0$ for all $j=1,\ldots,n$, where $\al_k=0$ for a fixed $1\le k\le n+1$. Then we get that $\al\equiv 0$.
\end{lemma}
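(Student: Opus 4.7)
The plan is to exploit two facts already established just before the statement: first, that each $q_i$ in fact lives in the weighted polynomial space $\WW_{n-1}$, which is itself an ECHS of dimension $n$; and second, that the collection $\{q_i : i \ne k\}$ is linearly independent, as recorded right after Lemma \ref{l:Kilgore_assump} by invoking Kilgore's Proposition (our Lemma \ref{l:Kilgore_orig}). With these two ingredients in hand, the rest is a short zero-counting argument.

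The first step will be to verify that $q_i \in \WW_{n-1}$. Writing $P_i(t) = e^{-t} p_i(t) + c_i$ with $p_i \in \Pn$, a direct differentiation gives $(P_i)'_t(t) = e^{-t}\bigl(p_i'(t) - p_i(t)\bigr)$, which lies in $\WW_n$. By Lemma \ref{l:zi_interior}, $z_i$ is an interior critical point of $P_i$, hence $(P_i)'_t(z_i) = 0$; equivalently, the algebraic polynomial $p_i' - p_i$ vanishes at $z_i$ and is therefore divisible by $(t - z_i)$. Consequently $q_i(t) = \pm e^{-t}\,\hat{p}_i(t)$ for some $\hat{p}_i \in \Pnn$, placing $q_i$ in the $n$-dimensional rank $n-1$ ECHS $\WW_{n-1}$.

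The second step is the zero count. Since $\alpha_k = 0$, the assumed identities $\sum_{i=1}^{n+1} \alpha_i q_i(x_j) = 0$ collapse to $f(x_j) = 0$ for $j = 1,\ldots, n$, where $f := \sum_{i \ne k} \alpha_i q_i$. By the first step, $f \in \WW_{n-1}$, and every nonzero member of this ECHS has at most $n-1$ real zeros. Having exhibited $n$ distinct zeros $x_1 < \cdots < x_n$, we are forced to conclude $f \equiv 0$.

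The third and final step is to combine $f \equiv 0$ with the linear independence of $\{q_i : i \ne k\}$ provided by Lemma \ref{l:Kilgore_assump} and Lemma \ref{l:Kilgore_orig}: this immediately gives $\alpha_i = 0$ for all $i \ne k$, which together with $\alpha_k = 0$ yields $\alpha \equiv 0$. No step here is the hard one; the genuine obstacle has already been overcome in the preceding pages, where the Markov-type inheritance theorem (Theorem \ref{th:hybrid-Markov}), the pairwise interlacing of the derivative root sequences, and the cyclic zero-counting of Corollary \ref{c:zi_cyclically} were combined to justify applying Kilgore's Proposition to the $q_i$.
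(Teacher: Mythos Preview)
Your proof is correct and follows essentially the same route as the paper: show that $f:=\sum_{i\ne k}\alpha_i q_i$ lies in the rank $n-1$ ECHS $\WW_{n-1}$, observe that it vanishes at the $n$ distinct nodes $x_1,\ldots,x_n$ and hence must be identically zero, and then invoke the linear independence of $\{q_i:i\ne k\}$ (via Lemmas \ref{l:Kilgore_assump} and \ref{l:Kilgore_orig}) to conclude $\alpha\equiv 0$. The only difference is that you spell out the verification $q_i\in\WW_{n-1}$ explicitly, whereas the paper simply asserts ``we already know'' this; your added computation is correct and a welcome clarification.
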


\begin{proof}
We already know that $q_1,\dots, q_{n+1}$ belong to the rank $n-1$ ECH system $\WW_{n-1}$. Since $\sum_{i=1, i\ne k}^{n+1} \ai q_i(x_j)=0$ for all $j=1,\ldots,n$, this means that $f:=\sum_{i=1, i\ne k}^{n+1} \ai q_i$ has $n$ distinct zeros, which implies that $f \equiv 0$. However, taking into account Lemma \ref{l:Kilgore_assump}, Kilgore's Lemma \ref{l:Kilgore_orig} yields that $\{q_1,\dots,q_{n+1}\}\setminus\{q_k\}$ is a linearly independent system of functions. Thus, $\al\equiv 0$ follows.
\end{proof}

\begin{corollary}\label{c:nonsingularity} The matrices $Q_k$ in \eqref{Qmatrix} are nonsingular for all $k=1,\ldots,n+1$.
\end{corollary}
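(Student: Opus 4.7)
The statement to prove, Corollary \ref{c:nonsingularity}, is essentially an immediate reformulation of the key Lemma \ref{lem:keylemma} in the language of linear algebra, so my plan is to observe this correspondence carefully rather than build any new machinery. The matrix $Q_k$ defined in \eqref{Qmatrix} is the $n\times n$ matrix whose $(j,i)$ entry (with $i$ running over $\{1,\ldots,n+1\}\setminus\{k\}$) equals $q_i(x_j)$. Nonsingularity of $Q_k$ is equivalent to the linear independence of its columns, i.e., to the statement that no nontrivial linear combination of the columns produces the zero vector.

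The plan is therefore as follows. First I would write out what a linear dependence among the columns of $Q_k$ would mean: there would exist coefficients $\{\alpha_i\}_{i\ne k}$, not all zero, such that
\[
\sum_{i=1,\, i\ne k}^{n+1} \alpha_i\, q_i(x_j) = 0 \qquad \text{for all } j=1,\ldots,n.
\]
Then I would extend this tuple to an $(n+1)$-tuple $(\alpha_1,\ldots,\alpha_{n+1})$ by setting $\alpha_k:=0$, so that in fact $\sum_{i=1}^{n+1}\alpha_i q_i(x_j)=0$ for every $j$, with the prescribed vanishing $\alpha_k=0$. This is exactly the hypothesis of Lemma \ref{lem:keylemma}, and its conclusion forces $\alpha \equiv 0$, contradicting the assumption that the linear dependence was nontrivial.

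Hence the columns of $Q_k$ are linearly independent and $Q_k$ is nonsingular, for every admissible $k$. The only thing to verify is that the index bookkeeping matches: the $n$ equations come from the $n$ interpolation nodes $x_1,\ldots,x_n$ (the single fixed node $x_0=0$ is not used in the matrix formation), and the $n$ unknown coefficients $\alpha_i$ with $i\ne k$ come from the $n$ columns of $Q_k$. There is no real obstacle here, since all the substantial work has already been done: the nonvanishing properties of $q_i$ at the $z_j$ (Property~1 of Lemma \ref{l:Kilgore_assump}) together with Kilgore's linear-independence result (Lemma \ref{l:Kilgore_orig}) have been distilled into Lemma \ref{lem:keylemma}, and the Corollary is simply the matrix-theoretic restatement.
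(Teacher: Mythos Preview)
Your proposal is correct and matches the paper's own proof essentially verbatim: assume a linear dependence among the columns of $Q_k$, extend with $\alpha_k=0$, and invoke Lemma~\ref{lem:keylemma} to force $\alpha\equiv 0$. There is nothing to add.
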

\begin{proof} Assume that a linear combination, with coefficients $\alpha_i$, of the row vectors of the matrix $Q_k$ equals the zero vector.
That means $\sum_{i=1,i\ne k}^{n+1} \ai q_i(x_j)=0$ for all $j=1,\ldots,n$. By Lemma \ref{lem:keylemma}, it follows that all $\alpha_i=0$. Hence, the row vectors are linearly independent, and the matrix is nonsingular.
\end{proof}

\subsection{Properness of $\Gamma(\xx)=(m_2(\xx)-m_1(\xx), \dots, m_{n+1}(\xx)-m_{n}(\xx))$}

We recall that a continuous map $f: X\to Y$ between topological spaces is called proper if $f^{-1}(Q)(\subset X)$ is compact for any compact subset $Q$ of $Y$.

\begin{theorem}\label{t:homeom_const}
The mapping $\Gamma\colon S\to \mathbb{R}^{n}$ defined by
\begin{equation}\label{gamma}
\Gamma(\xx)=(m_2(\xx)-m_1(\xx), \dots, m_{n+1}(\xx)-m_{n}(\xx))
\end{equation}
is proper.
\end{theorem}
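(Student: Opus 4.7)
My plan is to establish properness by showing that if $\xx^{(k)}\in S$ satisfies $\Gamma(\xx^{(k)})\in Q$ for a fixed compact $Q\subset\RR^{n}$, then $\xx^{(k)}$ admits a subsequence converging in $S$. Continuity of $\Gamma$ follows at once from the continuous differentiability of each $m_i$ proved in Lemma \ref{l:Kilgore_Cheney}, so the core task is the compactness aspect. The strategy will be a reductio ad absurdum: assume $\xx^{(k)}$ has no subsequence converging in $S$, and aim to produce indices $j,\ell$ with $m_j(\xx^{(k)})$ bounded while $m_\ell(\xx^{(k)})\to\infty$ along a subsequence; then via the telescoping bound $|m_\ell-m_j|\le n\|\Gamma\|_\infty$ one concludes $\|\Gamma(\xx^{(k)})\|_\infty\to\infty$, contradicting $\Gamma(\xx^{(k)})\in Q$.

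After passing to a subsequence, one of two patterns must occur: (A) some fixed gap collapses, $\delta_k:=x_i^{(k)}-x_{i-1}^{(k)}\to 0$; or (B) every gap stays bounded below while $x_n^{(k)}\to\infty$. In case (A), I would expand \eqref{eq_hkt} on the collapsing $I_i$ to obtain $h_{i-1}(t)+h_i(t)=1+O(\delta_k)$ and $h_l(t)=O(\delta_k^{2})$ for $l\ne i-1,i$ (those $h_l$ vanish at both endpoints of $I_i$), so combining with the sign pattern produces $P_i=1+O(\delta_k)$ and hence $m_i(\xx^{(k)})\to 1$. On the other hand, $h_i$ carries the coefficient $1/\delta_k$ and becomes unbounded on any adjacent interval whose length stays bounded below, so the corresponding $m_{i\pm 1}(\xx^{(k)})$ will tend to $\infty$; the sub-case where several consecutive gaps collapse simultaneously is handled by a rescaling argument anchored at $x_0=0$, after which the same estimate applies to the outermost non-collapsing interval.

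For case (B), the plan is to exploit the decay of $e^{-t}$ on $I_{n+1}=[x_n^{(k)},\infty)$: each summand $h_j$ with $j<n$ and $j\not\equiv n\pmod 2$ in the expansion \eqref{Pi2} of $P_{n+1}$ carries a weight bounded by $e^{x_j^{(k)}-x_n^{(k)}}$ on $I_{n+1}$, and a direct computation of its maximum against the polynomial factor $(t-x_n^{(k)})/(x_j^{(k)}-x_n^{(k)})$ should yield $\sup_{I_{n+1}}|h_j|\to 0$, so $m_{n+1}$ stays uniformly close to $1$. Conversely, $h_n$ possesses the exponential prefactor $e^{x_n^{(k)}}$ coming from its weight normalization, while its Lagrangian denominator $\prod_{j<n}(x_n^{(k)}-x_j^{(k)})$ grows only polynomially in $x_n^{(k)}$; evaluating at a fixed reference point $t_0>0$ that eventually lies in $\intt I_k$ for a subsequence-constant index $k$ should give $|h_n(\xx^{(k)},t_0)|\to\infty$, and therefore $m_k(\xx^{(k)})\to\infty$.

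The hard part will be case (B), which splits into several sub-cases according to whether some inner nodes $x_j^{(k)}$ also diverge, each requiring a careful verification both of the boundedness of $m_{n+1}$ and of the blow-up of the chosen $m_k$. What makes the argument tractable at all is the asymmetry produced by the weight $e^{-t}$ -- simultaneously suppressing $P_{n+1}$ on the unbounded end-interval and amplifying $h_n$ on the bounded inner ones; without this asymmetry all $m_i$ might conceivably grow at the same rate and $\Gamma$ could in principle remain bounded even while $\xx^{(k)}$ leaves every compact subset of $S$.
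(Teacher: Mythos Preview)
Your overall architecture (find one bounded $m_j$ and one unbounded $m_\ell$, then telescope) is reasonable, but Case (A) contains a genuine error. The estimate $h_l(t)=O(\delta_k^{2})$ on the collapsing interval $I_i$ for $l\ne i-1,i$ relies only on the double zero of $h_l$ at $x_{i-1},x_i$, and tacitly assumes the remaining factor is bounded. It is not: $h_l$ carries the normalising constant $e^{x_l}$ from \eqref{eq_hkt}, and your dichotomy (A)/(B) allows a gap to collapse \emph{while} some $x_l\to\infty$. Concretely, take $n=2$, $x_0=0$, $x_1=\delta$, $x_2=1/\delta$. Then on $I_1=[0,\delta]$ one has
\[
h_2(\delta/2)\;=\;e^{-\delta/2+1/\delta}\,\frac{(\delta/2)(\delta/2-\delta)}{(1/\delta)(1/\delta-\delta)}\;\sim\;-\tfrac14\,e^{1/\delta}\,\delta^{4}\;\longrightarrow\;-\infty,
\]
so $P_1=1-2h_2\to+\infty$ and $m_1\to\infty$: the collapsing interval's maximum does \emph{not} tend to $1$. (In this particular example $m_3$ happens to stay bounded, so the strategy could be rescued with a different bounded index, but your proposal never establishes in general that \emph{some} $m_j$ remains bounded; the ``rescaling argument anchored at $x_0=0$'' is only sketched and does not address the mixed scenario.)

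The paper sidesteps this entirely by never trying to bound any single $m_j$. Instead it fixes a reference point $s\in[0,1]$ with $|s-x_l|\ge 1/(2(n+1))$ for every $l$, and compares
\[
\left|\frac{h_j(\xx,z)}{h_j(\xx,s)}\right|=\frac{e^{-z}}{e^{-s}}\prod_{l\ne j}\left|\frac{z-x_l}{s-x_l}\right|,
\]
where $z$ is either $z_{n+1}$ (for the boundedness part) or a point in a collapsing $I_i$ (for the boundary part). The dangerous factor $e^{x_j}$ cancels in this ratio, leaving only controllable quantities. Summing over $j$ yields $m_k(\xx)\ge \tfrac{1}{3\varepsilon}\,m_i(\xx)$ for the index $k$ with $s\in I_k$, and since $m_i\ge 1$ always (Lemma~\ref{l:K-Ch}), this ratio bound alone contradicts the telescoping estimate $m_k/m_i\le nC+1$. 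No individual $m_j$ needs to stay bounded, which is exactly what makes the argument go through uniformly over all degeneration patterns.
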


\begin{proof} We know, that the mapping $\xx\to m_i(\xx)$ is continuous for every $i=1,\dots,n+1$; hence the continuity of $\Gamma$ follows immediately.

Let $Q$ be a compact subset of $\RR^{n}$ and let $W:=\Gamma^{-1}(Q)$ denote its preimage under $\Gamma$. By continuity, $W$ is a (relatively) closed subset of $S$.

In the first step, we show that the boundedness of $Q$ implies that $W$ is also bounded\footnote{The whole proof is a mere technicality with the classic paper \cite{CBoorPinkus} essentially having it all. However, in that case, there was no boundedness question, given that the base interval $\II$ was taken finite, so that formally we need a proof here.}. For definiteness, assume that all coordinates of points in $Q$ are bounded by some constant $C$. If $\xx \in S$ belongs to $W$, and is thus mapped to a point of $Q$, then for all applicable indices $i$ we have $|m_i(\xx)-m_{i-1}(\xx)|\le C$, hence $\max\limits_{i=1,\ldots,n} |m_i(\xx) - m_{n+1}(\xx)| \le nC$. Since $m_{n+1}(\xx) \ge 1$, it follows that for all $i=1,\ldots,n$ we must have
\[
\frac{m_i(\xx)}{m_{n+1}(\xx)} \le nC+1.
\]
Fix a large number $q>1$ and suppose that $\xx \in S$ satisfies $x_n>q$. We will show that, for a sufficiently large choice of $q$ this point cannot belong to $W$. For sure, there exists a point $s \in [0,1]$ such that $|s-x_i|\ge 1/(2n+1)$ for all indices $i$. This point $s \in [0,1]$ belongs to some of the $I_i(\xx)$, but not to $I_{n+1}(\xx)$, for $s \le 1 <q <x_n$. Take the index $i$ with $s \in I_i(\xx)$ and let $j$ be any index with $0\le j \le n$. For $t\ge x_n$, we have
\begin{align*}
\left|\frac{h_j(\xx,t)}{h_j(\xx,s)}\right|=\frac{e^{-t}}{e^{-s}} \prod\limits_{l=0,l\ne j}^n \left|\frac{t-x_l}{s-x_l}\right| \le \frac{e^{-t}}{\min_{u\in [0,1]} e^{-u}} \frac{t^n}{1/\left(2n+1\right)^{n}}
= e^{1-t} (2n+1)^n t^n.
\end{align*}

Recall that $t\ge x_n>q$, so if $q$ is chosen large enough, then by $e^{-t}t^n \to 0$ we are led to
\begin{equation}\label{hj_estimation}
\left|\frac{h_j(\xx,t)}{h_j(\xx,s)}\right| <\ve, \qquad \text{ for all }  j=0,\ldots,n \qquad \text{ and } \qquad t\in I_{n+1}(\xx).
\end{equation}
Therefore with $t=z_{n+1}\ge x_n$, we get
\begin{align*}
m_i(\xx) \ge P_i(\xx,s)=\sum\limits_{j=0}^n |h_j(\xx,s)|+|h_{n+1}(\xx,s)|\ge \frac{1}{\varepsilon}\sum\limits_{j=0}^n |h_j(\xx,z_{n+1})|,
\end{align*}
and similarly, \eqref{hj_estimation} also holds for $t=x_n>q$, i.e.,
\begin{align*}
m_i(\xx) \ge P_i(\xx,s)=\sum\limits_{j=0}^n |h_j(\xx,s)|+|h_{n+1}(\xx,s)|\ge \frac{1}{\varepsilon}\sum\limits_{j=0}^n |h_j(\xx,x_n)|=\frac{1}{\varepsilon}.
\end{align*}
These inequalities guarantee that 
\begin{align*}
m_i(\xx) & \ge \frac{1}{3\varepsilon}\left(2\sum\limits_{j=0}^n |h_j(\xx,z_{n+1})|+1\right) \ge \frac{1}{3\varepsilon}\left(\sum\limits_{j=0}^n |h_j(\xx,z_{n+1})|+\left|1-\sum\limits_{j=0}^n h_j(\xx,z_{n+1})\right|\right)\\
&=\frac{1}{3\varepsilon}\left(\sum\limits_{j=0}^n |h_j(\xx,z_{n+1})|+\left|h_{n+1}(\xx,z_{n+1})\right|\right)=\frac{1}{3\varepsilon}m_{n+1}(\xx).
\end{align*}
Clearly, if $\ve <\frac{1}{3(nC+1)}$, then this means $\xx \notin W$.

\medskip
So $W$ is a bounded and relatively closed set; in particular, there is a constant $c$ such that $\|\xx\|\le c$ for all points $\xx \in W$. It remains to see that $W$ is closed in $\RR^n$, too. In other words, we have to show that a point $\xx \in \partial S$ with $\|\xx\|\le c$ cannot be a limit point\footnote{If we have this, then closing $W$ in $\RR^n$ results in $\overline{W} \subset \oS$, but $\partial S \cap \overline{W}=\emptyset$ so that $\overline{W} \subset S$, and $W$ being relatively closed, $W=\overline{W} \cap S = \overline{W}$ proves that $W$ is closed even in $\RR^n$.}  of $W$.

Now  let $\xx \in \DS$ such that $x_{i-1}=x_i$, say. Also let $\de>0$ be small, and assume that $\yy \in S$ is $\de$-close to $\xx$, i.e., $\|\xx-\yy\|<\de$. We take $z_i \in \intt I_i(\yy)$ the maximum point of $P_i(\yy,\cdot)$ on $I_i$, and take any other point $s \in [0,1]$ with the property that it is at least of $1/(2n+1)$ distance apart from any other point of the node system. With the unique index $k$ with $s \in \intt I_k$ we obviously have
\[
m_k(\yy) \ge P_k(\yy,s) = \sum_{j=0}^n |h_j(\yy,s)|+|h_{n+1}(\yy,s)|.
\]

Considering now the ratio of $h_j(\yy,t)$ and $h_j(\yy,s)$ for any $t\in  I_i$ and $0\le j\le n$, we are led to
\begin{align*}
\frac{|h_j(\yy,t)|}{|h_j(\yy,s)|}  = \frac{e^{-t}}{e^{-s}} \prod_{l=0, l \ne j}^n \left| \frac{t-y_l}{s-y_l}\right| 
\le \begin{cases}\frac{\max_{u\in [0,c+\de]}e^{-u}}{\min_{u\in [0,1]} e^{-u}} \left( 2n+1 \right)^n (c+\de)^{n-2} \de^2, \quad \text{ if } j\ne i-1, i,\\[2mm]
\frac{\max_{u\in [0,c+\de]}e^{-u}}{\min_{u\in [0,1]} e^{-u}} \left(2n+1\right)^n (c+\de)^{n-1} 2\de, \quad \text{ if } j\in \{i-1, i\},
\end{cases}
\end{align*}
where the length of $I_i(\yy)=[y_{i-1},y_i]$ is at most $2\de$. This ratio will remain below $C_1\de$, with a sufficiently large constant $C_1$. Therefore, for $t=z_i\in \intt I_i$ we get that 
\[
m_k(\yy) \ge \sum_{j=0}^n |h_j(\yy,s)|+|h_{n+1}(\yy,s)| \ge \frac{1}{C_1\de} \sum_{j=0}^n |h_j(\yy,z_i)|. 
\]
In a similar way, for $t=y_i\in I_i$ we also get, that
\[
m_k(\yy) \ge \sum_{j=0}^n |h_j(\yy,s)|+|h_{n+1}(\yy,s)| \ge \frac{1}{C_1\de} \sum_{j=0}^n |h_j(\yy,y_i)| = \frac{1}{C_1\de},
\]
and hence, 
\begin{align*}
m_k(\yy) & \ge \frac{1}{3C_1\de}\left(2\sum_{j=0}^n |h_j(\yy,z_i)| +1\right)\\
         & \ge \frac{1}{3C_1\de}\left(\sum_{j=0}^n |h_j(\yy,z_i)| +\left|1-\sum_{j=0}^n h_j(\yy,z_i)\right|\right)
          = \frac{1}{3C_1\de} m_i(\yy).
\end{align*}

As before, we can make use of the trivial lower estimate $m_i(\yy)\ge 1$ to infer
\begin{align*}
\max\limits_{l=2,\ldots,n+1} |m_l(\yy)-m_{l-1}(\yy)|  \ge \frac{1}{n} |m_k(\yy)-m_i(\yy)| 
 \ge \frac{1}{n} \frac{|m_k(\yy)-m_i(\yy)|}{m_i(\yy)}
\ge \frac{1}{n} \left( \frac{1}{3C_1\de} -1 \right).
\end{align*}
This means that for small $\de$ we will have $\| \Gamma(\yy)\| \ge \frac{1}{n} \left( \frac{1}{3C_1\de} -1 \right) >C$, whereas $Q$ was bounded by $C$, so that $\Gamma(\yy) \notin Q$. That is, $\xx$ cannot be a limit point of $W$. The proof is finished.
\end{proof}

In the following, we extract and unify the relevant statements from Theorems 1 and 2 of \cite{Shi} that are relevant to our case. 

\begin{theorem}[Shi]\label{th:Shi}
Let $S := \{\xx=(x_1,\dots,x_n) : x_0:=a < x_1 < \dots < x_n < x_{n+1}:=b\}$,
where $-\infty < a < b < \infty$. Let $m_i(\xx) \ge 0$ $(i=1,\dots,n+1)$ be continuously differentiable functions on $S$, and set $\overline{m}(\xx) := \max_i m_i(\xx)$. 

Assume that the functions $m_i$ satisfy the nonsingularity property (Definition \ref{def:nonsing}), and that $\Gamma$, defined in~\eqref{gamma}, satisfies
\begin{equation}\label{lim_gamma}
\lim_{\min\limits_{0\le j\le n-1} (x_{j+1}-x_j)\to 0} \|\Gamma(\xx)\| = \infty.
\end{equation}

Then there exists a unique optimal node system $\yy \in S$, i.e.,
\[
\overline{m}(\yy) = \min_{\xx \in S} \overline{m}(\xx).
\]
Moreover, the Bernstein and Erd\H{o}s Conjectures, as well as the homeomorphism and intertwining properties (see Definitions \ref{def:hom} and \ref{def:int}), hold.

\end{theorem}

We formulate a slight generalization of this result. 

\begin{theorem}\label{Shi_mod}
Let $m_i(\xx) \ge 0$ $(i=1,\dots,n+1)$ be continuously differentiable functions on an open domain $X\subset \RR^n$. With the notations of the above theorem, assume that the functions $m_i$ satisfy the nonsingularity property, and that $\Gamma$ is a proper mapping. 

Then, all the assertions in the conclusion of the above Theorem \ref{th:Shi} hold.
\end{theorem}

\begin{proof}
Theorem \ref{th:Shi} is proved in \cite{Shi} via a linear programming argument, which in itself works also for the case of the simplex belonging to an infinite interval, and as a matter of fact, even for an arbitrary simply connected domain. However, at one point, Hadamard's classical topology theorem is used for $\Gamma$, which in turn requires the nonsingularity and the properness of $\Gamma$. The nonsingularity of $\Gamma$ follows by general linear algebra as given in \cite{Shi}, but actually goes back to \cite{CBoorPinkus}. Further, for a finite interval, properness follows directly from $\Gamma$ satisfying the condition given in \eqref{lim_gamma}. In the case of the simplex built on the infinite interval $[0,\infty)$, or in general for an open domain $X$, properness additionally requires boundedness. (For Lagrange interpolation with  hybrid polynomials, we established it in the first part of the proof of Theorem~\ref{t:homeom_const}). With properness given, Shi's proof can be followed verbatim. 

\end{proof}

We are now prepared to present the desired results on interpolation with the hybrid system. 

\begin{theorem}\label{Bernstein_Erdos}
For the hybrid system $\YY_n=\WW_n\oplus \bf{1}$, the following statements hold true:  
\begin{itemize}

\item[1)] there exists a unique optimal node system $\yy\in S$, i.e., a node system with  
\begin{equation*}
\|L(\yy,\cdot)\|_{\infty}\le \|L(\xx,\cdot)\|_{\infty}, \quad \text{ for any other node system}\quad \xx\in S;
\end{equation*}
\hspace{2mm}
\item[2)] a node system $\yy$ is optimal if and only if it is an equioscillating node system;
\hspace{2mm}
\item[3)] for any $\xx\in S$, $\xx\ne \yy$, there exist indices $i$ and $j$, such that $$m_i(\xx)<m_i(\yy) \quad \text{ and } \quad m_j(\xx)>m_j(\yy);$$ 
\hspace{2mm}
\item[4)] the map $\Gamma\colon S\to \RR^{n}$ defined by \eqref{gamma} is a (global) homeomorphism of $S$ onto $\RR^n$;
\hspace{2mm}
\item[5)] for any two distinct elements $\xx, \zz\in S$, there exist indices $i$ and $j$, such that $$m_i(\xx)<m_i(\zz)\quad \text{ and } \quad m_j(\xx)>m_j(\zz).$$

\end{itemize} 
\end{theorem}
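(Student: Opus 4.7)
The plan is to deduce all five assertions by invoking the framework developed by Shi in \cite{Shi}, which reduces them to two structural prerequisites: nonsingularity of every submatrix $A_k$ obtained by deleting the $k$th row from the Jacobian $[\partial m_i/\partial x_j]$, and properness of the map $\Gamma\colon S\to \RR^{n}$. In our hybrid setting these are precisely Corollary \ref{c:nonsingularity} -- via the equivalence $A_k \sim Q_k$ induced by the Braess scaling of rows and columns -- together with Theorem \ref{t:homeom_const}. Once both hypotheses are available, the chain of consequences is rather standard.

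Concretely, I would first establish assertion (4), the global homeomorphism property. Nonsingularity of each $A_k$ implies, through the Jacobian of $\Gamma$, that $\Gamma$ is a local $C^1$-diffeomorphism on the connected simplex $S$. Combining this with properness and the fact that $\RR^{n}$ is simply connected shows that $\Gamma$ is a covering map onto $\RR^{n}$, hence a bijection, hence a global homeomorphism onto $\RR^{n}$ (the surjectivity follows because a proper local homeomorphism into a connected, simply connected target is automatically a covering, and then necessarily a single-sheeted one). From (4) the uniqueness in (1) is immediate by choosing $\yy := \Gamma^{-1}(\mathbf{0})$ as the only equioscillating node system, and existence follows from surjectivity of $\Gamma$.

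The remaining assertions (2), (3) and (5) are then obtained by Shi's linear programming argument: the non\-singularity of each $A_k$ forces the one-dimensional kernel of the $(n+1)\times n$ rectangular Jacobian $A$ to be spanned by a vector whose coordinates are all nonzero and have a prescribed alternation of signs, and from this sign pattern one reads off that along any admissible perturbation of $\xx$ some of the $m_i$ must strictly increase while others must strictly decrease. Applied at $\xx = \yy$ this yields the sandwich property (3) and, together with uniqueness, gives the equivalence between optimality and equioscillation in (2); applied to the difference of two arbitrary node systems $\xx,\zz$ via the homeomorphism $\Gamma$, the same sign analysis produces the intertwining property (5).

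The main obstacle I expect is a bookkeeping one rather than a conceptual one: Shi's paper is formally written for a compact base interval with all nodes in $(a,b)$, whereas here the underlying interval is $[0,\infty)$, the first node $x_0=0$ is fixed, and $I_{n+1}$ is infinite. I would therefore need to verify carefully that his LP manipulations -- which depend only on the signs of the columns of $A_k$ and on the gradient formula for $m_i$ with respect to the free variables $x_1,\dots,x_n$ -- transfer verbatim to the present setup. Since Lemma \ref{l:Kilgore_Cheney} already supplies the derivative formula in exactly the form Shi uses, and since Lemma \ref{l:zi_interior} guarantees the interiority of each $z_i(\xx)$ needed for that formula to hold throughout $S$, no additional analytic input beyond relabelling of indices should be required.
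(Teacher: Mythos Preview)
Your proposal is correct and follows exactly the paper's own route: verify the two structural hypotheses (nonsingularity of all $A_k$ via Corollary \ref{c:nonsingularity} and properness of $\Gamma$ via Theorem \ref{t:homeom_const}), then invoke Shi's Theorems 1 and 2 in \cite{Shi} to obtain (1)--(5). One small terminological slip: the ``one-dimensional kernel'' you refer to is the \emph{left} null space of the $(n{+}1)\times n$ matrix $A$ (equivalently the kernel of $A^{\!T}$), not the ordinary kernel, which is trivial once any $A_k$ is nonsingular.
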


\begin{proof}
We have established that the two crucial properties -- the properness of 
 $\Gamma$ and the nonsingularity of matrices $A_k$, ($k=1,\dots, n+1$)-- hold for the hybrid system $\YY_n$ (see Theorem \ref{t:homeom_const} and Corollary \ref{c:nonsingularity}, respectively). Consequently, the assumptions of Theorem \ref{Shi_mod} are fulfilled and it follows that Parts 1) -- 5) hold.
\end{proof}

\section{Acknowledgement}
The author wishes to express her sincere gratitude to \emph{Professor Szilárd Révész} for introducing her to the present topic and for his continuous support throughout this project. His valuable and thoughtful comments, as well as his recommendations, have greatly contributed to improving this manuscript.

A substantial part of this research was conducted at the \emph{HUN-REN Alfréd Rényi Institute of Mathematics} during the author’s stay as a visiting researcher. The author gratefully acknowledges this support.

This project has received funding from the HUN-REN Hungarian Research Network.

\end{document}